\documentclass[11pt]{article}
\usepackage{amsmath,amssymb,amsxtra}
\usepackage{mathrsfs}
\usepackage{float,verbatim}
\usepackage{threeparttable,booktabs}
\usepackage{graphicx}
\usepackage{epstopdf}
\usepackage{mathtools}
\usepackage{subfig}
\usepackage[ruled,vlined]{algorithm2e}
\usepackage{color}
\usepackage{url}
\usepackage[toc,page]{appendix}
\graphicspath{{./fig/}}

\usepackage[nohead,margin=1.1in]{geometry}

\usepackage{array}

\newtheorem{theorem}{Theorem}[section]

\newtheorem{remark}{Remark}[section]
\newtheorem{prop}{Proposition}[section]
\newtheorem{lemma}{Lemma}[section]
\newtheorem{corollary}{Corollary}[section]
\newtheorem{assumption}{Assumption}[section]

\numberwithin{equation}{section}
\newtheorem{proof}{Proof}[section]

\allowdisplaybreaks
\newcommand{\E}{\mathbb{E}}
\newcommand{\Prob}{\mathbb{P}}

\title{Early stopping of stochastic variance reduced gradient for linear inverse problems by the discrepancy principle\thanks{B. Jin is supported by Hong Kong RGC General Research Fund (14306824) and ANR / Hong Kong RGC Joint Research Scheme (A-CUHK402/24), NSFC / RGC Joint Research Scheme (N\_CUHK446/25) and a start-up fund from The Chinese University of Hong Kong.}}

\author{Bangti Jin\thanks{Department of Mathematics, The Chinese University of Hong Kong, Shatin, N.T., Hong Kong (email: \texttt{b.jin@cuhk.edu.hk, zehuizhou@cuhk.edu.hk})}\and Zehui Zhou\footnotemark[2]}

\date{}

\begin{document}

\maketitle

\begin{abstract}
Stochastic variance reduced gradient (SVRG) is a variant of stochastic gradient descent and is a promising iterative method for solving large-scale inverse problems. 
Nevertheless, the development of theoretically grounded \textit{a posteriori} stopping rules for SVRG remains an open challenge. 
In this work, we provide a convergence analysis of SVRG equipped with the discrepancy principle, the most well-known \textit{a posteriori} stopping rule, for solving a class of linear inverse problems in Hilbert spaces. We establish the regularizing property of SVRG, and moreover, under suitable source conditions, we derive convergence rates of SVRG iterates. To the best of our knowledge, these are the first convergence rate results of any stochastic iterative method for inverse problems under the \textit{a posteriori} stopping rule. The theoretical findings are supported by numerical experiments.\\
\noindent\textbf{Keywords}: stochastic variance reduced gradient; regularizing property; convergence rate 
\end{abstract}


\date{}
\maketitle

\section{Introduction}

This work is concerned with using stochastic iterative methods for solving linear inverse problems in Hilbert spaces:
\begin{equation}\label{eqn:lininv}
A x=y^\dag,
\end{equation}
where the operator $A: X \rightarrow Y =Y_1\times \cdots \times Y_n$ represents the data formation mechanism and is given by $A x := (A_1 x,\cdots,A_n x)^\top$ for all $x\in X$, with bounded linear operators $A_i: X \rightarrow Y_i$ between Hilbert spaces $X$ and $Y_i$ equipped with norms $\|\cdot\|_X$ and $\|\cdot\|_{Y_i}$, respectively, and the superscript $\top $ denoting the vector transpose. 
$y^\dag=(y^\dag_1,\cdots,y^\dag_n)^\top = A x^\dag \in Y$ denotes the exact data with $x^\dag$ being the minimum-norm solution relative to the initial guess $x_0$, cf. \eqref{eqn:min-norm} below for the definition, 
and $x\in X$ denotes the unknown signal of interest.
In practice, we only have access to noisy data $$y^\delta =(y^\delta_1,\cdots,y^\delta_n)^\top =y^\dag +\xi,$$
where $\xi = (\xi_1,\cdots,\xi_n)^\top\in Y$ denotes the noise in the data with a noise level $\delta =\|\xi\|_Y:=\sqrt{\sum_{i=1}^n\|\xi_i\|_{Y_i}^2}$.
Linear inverse problems of the form \eqref{eqn:lininv} arise naturally in diverse practical applications, e.g., image deblurring \cite{BiemondLagendijk:1990} and computed tomography \cite{HermanLentLutz:1978}.

Stochastic gradient descent (SGD) \cite{RobbinsMonro:1951,JinLu:2019} and its variants, e.g., stochastic variance reduced gradient (SVRG) \cite{JohnsonZhang:2013,ZhangMahdaviJin:2013,JinZhouZou:2022ip}, are highly popular stochastic iterative methods for solving large-scale inverse problems, due to their excellent scalability with respect to data size. 
However, their mathematical analysis for inverse problems from the perspective of regularization theory has not been fully explored.
We refer interested readers to the recent surveys  \cite{EhrhardtKereta:2025,JinXiaZhou:2025} for existing results on stochastic iterative methods for inverse problems. To illustrate the idea, consider the following optimization problem 
\begin{equation*}
\min_{x\in X} J(x)
\end{equation*}
with the objective function $J(x)$ given by
\begin{equation*}
 J (x) = \frac{1}{2n}\|A x-y^\delta\|_Y^2 = \frac{1}{n}\sum_{i=1}^n f_i(x), \quad \mbox{with } f_i(x)=\frac{1}{2} \|A_ix-y_i^\delta\|_{Y_i}^2.
\end{equation*}
Given an initial guess $x_0^\delta\equiv x_0\in X$, SGD reads
\begin{equation*}
x_{k+1}^\delta = x_k^\delta - \eta_k f'_{i_k}({ x_k^\delta}), \quad k=0,1,\cdots,
\end{equation*}
and SVRG reads
\begin{equation}\label{eqn:SVRG}
x_{k+1}^\delta =x_k^\delta -\eta_k \big( f'_{i_k}(x_k^\delta)- f'_{i_k}(x_{[k/M]M}^\delta)+ J'(x_{[k/M]M}^\delta)\big), \quad k=0,1,\cdots,
\end{equation}
where the index $i_k$ is sampled uniformly at random from the set $\{1,\ldots,n\}$, $\{\eta_k\}_{k\geq 0}\subset (0,\infty)$ denotes the step size schedule, and $x_{[k/M]M}^\delta$ is an anchor point with $M$ being the frequency of computing the full gradient and $[\cdot]$ taking the integral part of a real number. The efficiency of the variance reduction step in SVRG depends on the frequency $M$, which in practice was suggested to be $2n$ and $5n$ for convex and nonconvex optimization, respectively  \cite{JohnsonZhang:2013}.
In practice, there are other choices of the anchor point, leading to different variants of SVRG.
In this work, we focus on the version given in Algorithm \ref{alg:svrg}, where $A^*$ and $A_i^*$ denote the adjoints of  $A$ and $A_i$, respectively.

\medskip
\begin{algorithm}[H]
\SetAlgoLined
Set initial guess $x_0^\delta=x_0$, frequency $M$, and step size schedule $\{\eta_k\}_{k\geq 0}$\\
 \For{$K=0,1,\cdots$}{
Compute $ {g_K = J'(x_{KM}^\delta)=\tfrac{1}{n}A^*(A x_{KM}^\delta-y^\delta) }$\\
\For{$t=0,1,\cdots,M-1$}{
Draw {$i_{KM+t}$} i.i.d. uniformly from $\{1,\cdots,n\}$\\
Update $x_{KM+t+1}^\delta =x_{KM+t}^\delta -\eta_{KM+t} \big(A_{i_{KM+t}}^*A_{i_{KM+t}}( x_{KM+t}^\delta-x_{KM}^\delta)+ g_K\big)$
}
Check the stopping criterion}
\caption{Stochastic Variance Reduced Gradient (SVRG) for problem \eqref{eqn:lininv}.\label{alg:svrg}}
\end{algorithm}

\medskip

To avoid uncontrolled growth of noise propagation in the iteration \eqref{eqn:SVRG}, early stopping rules are needed for obtaining meaningful approximations.  \textit{A priori} stopping rules that depend on the degree of regularity of the reference solution have been studied for SGD and SVRG for linear inverse problems through the lens of regularization theory \cite{JinLu:2019,JinZhouZou:2020siopt,JinZhouZou:2021siuq,JinZhouZou:2022ip,LuMathe2022}. 
However, in practice, the degree of regularity of the exact solution $x^\dag$ is unknown, making the \textit{a priori} rule practically infeasible. 
Thus, it is imperative to develop \textit{a posteriori} stopping rules that depend only on the noisy data $y^\delta$ and the noise level $\delta$.
The discrepancy principle due to the Russian mathematician Vladimir Morozov \cite{Morozov:1966} is the most popular \textit{a posteriori} stopping rule. It chooses the stopping index $k(\delta)$ by
\begin{align}\label{eqn:discrepancy_1}
k(\delta)=\min\{k\in \mathbb{N}:\;\|Ax_{k}^\delta-y^\delta\|\leq \tau \delta\},
\end{align}
with a fixed constant $\tau>1$. 
Given the stopping index $k(\delta)$, one central issue through the lens of regularization theory is to study the convergence and convergence rates of the approximation $x_{k(\delta)}^\delta$ as the noise level $\delta$ tends to zero. This important issue remains open for SVRG equipped with the discrepancy principle. Note that only recently has the discrepancy principle \eqref{eqn:discrepancy_1} been investigated and shown to be valid for SGD when solving linear inverse problems in Hilbert spaces \cite{JahnJin:2020}.
We also refer the readers to \cite{HuangJinLuZHang:2025} for an \textit{a posteriori} stopping rule that adapts the discrepancy principle to stochastic mirror descent (SMD) for solving inverse problems in Banach spaces. 
However, the convergence rates of these algorithms remain missing. 

In this work, we investigate the convergence of SVRG stopped by the discrepancy principle \eqref{eqn:discrepancy_1} at the anchor points $KM$, with the stopping index $k(\delta)$ determined by
\begin{align}\label{eqn:discrepancy_2}
k(\delta)=M\min\{K\in \mathbb{N}:\;\|Ax_{KM}^\delta-y^\delta\|\leq \tau \delta\}.
\end{align}
Without loss of generality, we may assume that $\|Ax_0^\delta-y^\delta\|\geq \tau \delta$ so that $k(\delta)\geq M$.
One advantage of SVRG over SGD and SMD is that it does not incur any extra cost when using the rule \eqref{eqn:discrepancy_2}. 
Indeed, unlike SGD or SMD \cite{JahnJin:2020,HuangJinLuZHang:2025}, SVRG periodically computes the residual $A x_{KM}^\delta-y^\delta$ at the anchor point $x_{KM}^\delta$ for every $M$ iterations, making it well suited to use the rule \eqref{eqn:discrepancy_2}. 
Moreover, its built-in variance reduction mechanism \cite{JinZhouZou:2022ip} leads to smoother error and residual trajectories, which potentially enhances the numerical stability of the rule \eqref{eqn:discrepancy_2}.
In this work, we prove that SVRG iterates with a suitable step size schedule converge to the minimum-norm solution $x^\dag$ at a certain rate in terms of the noise level $\delta$ for any frequency $M$, when stopped by the rule \eqref{eqn:discrepancy_2}.
The convergence rate is (nearly) optimal for nonsmooth solutions and is comparable with existing rates for SGD \cite{JinLu:2019,JinZhouZou:2020siopt} and SVRG \cite{JinZhouZou:2022ip,JinChen:2024} with \textit{a priori} stopping rules.

The discrepancy principle \eqref{eqn:discrepancy_1} has been investigated in the works \cite{JahnJin:2020} and \cite{JinChen:2024} for SGD and SVRG, respectively, for linear inverse problems in Hilbert spaces. 
Specifically, the work \cite{JahnJin:2020} proves both the finite-iteration termination property (heavily depending on the decaying step sizes) and regularizing property in high probability for SGD, while \cite{JinChen:2024} presents the almost sure finite-iteration termination for SVRG. Compared with the existing works \cite{JahnJin:2020,JinChen:2024}, we contribute to the convergence analysis both in high probability and in the uniform sense of SVRG stopped by the discrepancy principle; see Theorems \ref{thm:dp_E} and \ref{thm:dp} for the finite-iteration termination property and convergence rates in terms of the noise level $\delta$, and Theorem \ref{thm:regularizing} for the regularizing property. 
The obtained convergence rates are (nearly) order optimal for nonsmooth solutions. To the best of our knowledge, these represent the first convergence rate results for any stochastic iterative method equipped with the discrepancy principle.

The rest of the work is organized as follows. In Section \ref{sec:main}, we present and discuss the main results, i.e., Theorems \ref{thm:dp_E}, \ref{thm:dp} and \ref{thm:regularizing}, including the finite-iteration termination properties, convergence rates, and the regularizing property, for SVRG with the discrepancy principle \eqref{eqn:discrepancy_2}. We present the proof of the main results in Section \ref{sec:conv}. Then in Section \ref{sec:num}, we present several numerical experiments to complement the analysis, which indicate the optimality of SVRG and its advantages over the Landweber method. 
Finally, we conclude this work with further discussions in Section \ref{sec:conc}. In the three appendices, we collect several technical estimates and intermediate results. Below, we surppress the subscripts in the norms and inner products, as the spaces are clear from the context. The notation $(\cdot,\cdot)$ denotes the inner product.

\section{Main results and discussions}\label{sec:main}
In this section, we present the main results of the work. 
First, we state the assumptions on the step size $\eta_j$ and the reference solution $x^\dag$, which is taken to be the unique minimum-norm solution relative to the initial guess $x_0$, given by
\begin{equation}\label{eqn:min-norm}
  x^\dag = \arg\min_{x\in X: A x = y^\dag} \|x-x_0\|,
\end{equation}
for analyzing the convergence of SVRG for linear inverse problems. These conditions are often employed to establish the convergence rates of regularized solutions \cite{HankeNeubauerScherzer:1995,JinZhouZou:2020siopt,JinZhouZou:2021siuq,JinZhouZou:2022ip}. The notation $\|A_i\|$ denotes the operator norm of $A_i$, and $\mathcal{N}(A)$ denotes the null space of $A$. 
\begin{assumption}\label{ass}
The following assumptions hold.
\begin{itemize}
\item[$\rm(i)$] The step size $\eta_j = c_0$, $j=0,1,\cdots$, with $c_0\leq L^{-1}$, where $L:= \max_{1\leq i\leq n}\|A_i\|^2$.
\item[$\rm(ii)$] There exist some $\nu>0$ and $w\in \mathcal{N}(A)^\perp$ such that $x^\dag-x_0=B^\nu w$ and $\|w\|<\infty$, with $B=n^{-1}A^*A$ and $\mathcal{N}(A)^\perp$ being the orthogonal complement of $\mathcal{N}(A)$.
\end{itemize}
\end{assumption}

The constant step size schedule in (i) is commonly employed by SVRG \cite{JohnsonZhang:2013}. 
(ii) is known as the source condition  \cite{EnglHankeNeubauer:1996}, which implicitly assumes a certain degree of regularity on the initial error $x^\dag-x_0$ and is crucial for deriving convergence rates for iterative methods. 
Without a source condition, the convergence of regularized solutions  may be arbitrarily slow \cite{EnglHankeNeubauer:1996}. 

Let $\mathcal{F}_k$ be the filtration generated by the random indices $\{i_0,i_1,\ldots,i_{k-1}\}$, $\mathcal{F}={\bigvee_{k=1}^\infty} \mathcal{F}_k$, $(\Omega,\mathcal{F},\Prob)$ be the associated probability space, and $\E[\cdot]$ denote taking the expectation with respect to the filtration $\mathcal{F}$. 
The SVRG iterate $x_k^\delta$ is random but measurable with respect to $\mathcal{F}_k$. 
Now, we state the main results on the finite-iteration termination property and convergence rates of SVRG with the discrepancy principle \eqref{eqn:discrepancy_2}. The convergence rate analysis relies crucially on the observation that the variance component of the residual contributes only marginally for large iteration index $k$. The proofs of these results are given in Section \ref{sec:conv}.  Let
\begin{align*}
\overline{C_0}&:=\max\big(3^{-1}\|A\|^{-1}(5Ln^{-1}M)^{-\frac12},9^{-1}L^{-1}(15+7\ln M)^{-1}\big),\\
C_0&:=\big(14^2 \sqrt{L}M\|A\| \ln(2e^2n\sqrt{L}\|A\|^{-1})\big)^{-1}.
\end{align*}
Below the notation $\lfloor\cdot\rfloor$ and $\lceil\cdot\rceil$ denote the floor and ceiling functions, respectively. Without loss of generality, in the analysis, we assume $\delta\leq 1$ (which can be easily achieved by properly rescaling the inverse problems) and $M\geq 2$ (When $M=1$, SVRG reduces to the Landweber method). 
\begin{theorem}\label{thm:dp_E}
Let Assumption \ref{ass} hold with $c_0<\overline{C_0}$, and let the stopping index $k(\delta)$ be chosen by the discrepancy principle \eqref{eqn:discrepancy_2} with $\tau>1$.
Then with the convention $(\delta^{-1})^0:=\ln (\delta^{-1})$ and the constant $c_{\nu}=(\nu+\frac12)^{\nu+\frac12}c_0^{-(\nu+\frac12)}$, there holds
\begin{align*}
\lim_{\delta\to 0^+}\Prob\Bigg(k(\delta)\leq M\Bigg\lceil M^{-1}\bigg\lceil\bigg(\frac{2\sqrt{n}c_\nu\|w\|}{\tau-1}\bigg)^{\frac{2}{1+2\nu}}\delta^{-1}\big(\delta^{-1}\big)^{\max(\frac{2}{1+2\nu}-1,0)}\bigg\rceil \Bigg\rceil\Bigg)=1.
\end{align*}
Further, if $\nu\in(0,\frac12]$, for any $s\geq \max\big(1,\frac13(1+4\frac{\ln M}{\ln n})\big)$, there exists an event $Q$ satisfying $\Prob(Q)\geq 1-\frac12 \delta^{2(3s-2)}$ such that 
\begin{align*}
\E[\|x_{k(\delta)}^\delta-x^\dag\|^2|Q]^\frac12 \leq \overline{C}_{w,\nu,\tau}^* \max\big(s,s\big(\tfrac{\tau-1}{2c_\nu \|w\|}\big)^{3s},n^{-\frac{1}{2}}M^{2\nu}\big)^\frac{1}{1+2\nu} M^\frac{1}{1+2\nu} \delta^\frac{2\nu}{1+2\nu}\ln^\frac{1}{1+2\nu}(\delta^{-1}+M),
\end{align*}
where  $\overline{C}_{w,\nu,\tau}^*$ depends on $w$, $\nu$, $\tau$ and $(\tau-1)^{-1}$, but is independent of $s$, $n$, $M$ and $\delta$. 
\end{theorem}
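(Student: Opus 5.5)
We decompose the error along a bias/variance/noise split, as in the a priori analysis of SVRG. Write $e_k^\delta=x_k^\delta-x^\dag$ and $\E[x_k^\delta]$ for the mean iterate. Because the gradient estimator in \eqref{eqn:SVRG} is unbiased conditionally on $\mathcal{F}_k$, the mean iterate obeys the Landweber recursion $\E[x_{k+1}^\delta]=\E[x_k^\delta]-c_0 n^{-1}A^*(A\E[x_k^\delta]-y^\delta)$. We therefore split
\begin{align*}
Ax_{KM}^\delta-y^\delta=\big(A\E[x_{KM}^\delta]-y^\delta\big)+A\big(x_{KM}^\delta-\E[x_{KM}^\delta]\big).
\end{align*}
For the first (deterministic) term, the spectral calculus with $B=n^{-1}A^*A$ gives
\begin{align*}
\|A\E[x_k^\delta]-y^\delta\|\le \sqrt n\,\|B^{\frac12}(I-c_0B)^kB^\nu w\|+\delta\le \sqrt n c_\nu\|w\|k^{-(\nu+\frac12)}+\delta,
\end{align*}
using $\sup_{\lambda}\lambda^{\nu+\frac12}(1-c_0\lambda)^k\le c_\nu k^{-(\nu+\frac12)}$. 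For the second (variance) term, the SVRG structure should give a bound on $\E\|A(x_{KM}^\delta-\E[x_{KM}^\delta])\|^2$ that is small relative to $\delta^2$ for $K$ large. The restriction $c_0<\overline{C_0}$ is exactly what is needed to sum the geometric-type kernels $(I-c_0B)^{k-j}$ that propagate the per-step variance $c_0^2\E\|A_{i_j}^*A_{i_j}(x_j^\delta-x_{[j/M]M}^\delta)\|^2$. The $\ln M$ and $M$ factors arise from summing over one epoch, and I would take this bound from the technical appendix estimates on the variance component.

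For the first claim, let $k_*$ be the bracketed integer in the statement. At $k_*$ the deterministic residual is at most $\frac{\tau+1}{2}\delta$, by the choice of constant $(2\sqrt n c_\nu\|w\|/(\tau-1))^{2/(1+2\nu)}$. The extra factor $(\delta^{-1})^{\max(\frac{2}{1+2\nu}-1,0)}$ (a log when $\nu=\frac12$) supplies a margin that makes the variance part $o(\delta)$ in mean square at $k_*$. Chebyshev/Markov then gives $\Prob(\|A(x_{K_*M}-\E x_{K_*M})\|>\frac{\tau-1}{2}\delta)\to0$, where $K_*=\lceil k_*/M\rceil$. On the complement the discrepancy principle has already stopped, so $k(\delta)\le MK_*$ with probability tending to $1$.

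For the rate with $\nu\le\frac12$, I would proceed in three steps.
\begin{itemize}
\item[(a)] Build $Q$ from high-probability bounds on the random fluctuations. Markov's inequality applied to a high moment, or a union over the $O(\delta^{-2})$ candidate anchor indices, produces $\Prob(Q^c)\le\frac12\delta^{2(3s-2)}$. The condition $s\ge\frac13(1+4\ln M/\ln n)$ absorbs the $M$- and $n$-dependence of the moments.
\item[(b)] Derive a lower bound on $k(\delta)$ on $Q$. At index $k(\delta)-M$ the residual exceeds $\tau\delta$, while on $Q$ the variance part is controlled. This forces $\sqrt n c_\nu\|w\|(k(\delta)-M)^{-(\nu+\frac12)}\gtrsim(\tau-1)\delta$, which in turn gives an upper bound on $k(\delta)$ of order $(\delta^{-1})^{2/(1+2\nu)}$, up to the $s$-dependent factor appearing in the max.
\item[(c)] Estimate the error by the interpolation inequality $\|B^\nu z\|\le\|z\|^{\frac{1}{1+2\nu}}\ldots$ for the bias $(I-c_0B)^kB^\nu w$. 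Here the residual at $k(\delta)$ is at most $\tau\delta$ plus the variance, which bounds the bias by $\|w\|^{\frac1{1+2\nu}}\delta^{\frac{2\nu}{1+2\nu}}$. The noise propagation $\|\sum_{j<k}c_0(I-c_0B)^jA^*\xi/n\|\lesssim\sqrt{k/n}\,\delta$, together with the variance term $\E[\|x_k-\E x_k\|^2|Q]$, is bounded using the bound on $k(\delta)$ from step (b). This yields $M^{\frac1{1+2\nu}}\delta^{\frac{2\nu}{1+2\nu}}\ln^{\frac1{1+2\nu}}(\delta^{-1}+M)$.
\end{itemize}
Conditioning on $Q$ costs only a factor $\Prob(Q)^{-1}\le2$.

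I expect the main obstacle to be step (b) together with the construction of $Q$. The index $k(\delta)$ is itself random and correlated with the iterates, so the variance bounds must hold uniformly over all anchors up to the deterministic cap. This requires a union bound with polynomially small failure probabilities, hence the exponent $3s-2$ and the $s$-dependent constants. Handling $\E[\cdot|Q]$ at a random stopping time is also delicate. I would first try bounding by $\sup_{K\le K_{\max}}$ on $Q$ and summing the moment bounds over $K$, accepting the extra logarithm.
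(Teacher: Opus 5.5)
Your argument for the first claim is correct and is essentially the paper's Step 1: bound the mean residual, then apply Chebyshev at a single anchor using $\E[\|r_k^\delta-\E[r_k^\delta]\|^2]\le nM^2(c^{*})^2k^{-2}$ from Theorem~\ref{thm:res}(i). The rate part, however, has a genuine gap in both (b) and (c).

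\emph{Step (b).} Under $c_0<\overline{C_0}$ the only control on the fluctuation is that second-moment bound. Chebyshev at an anchor $k$ therefore fails with probability of order $nM^2\delta^{-2}k^{-2}$. Making this at most $\frac12\delta^{2(3s-2)}$ forces $k\gtrsim\delta^{-(3s-1)}$. So the best cap on $k(\delta)$ you can certify on such a $Q$ is of order $\delta^{-(3s-1)}$, not $\delta^{-2/(1+2\nu)}$, and the resulting bound $\sqrt{k/n}\,\delta$ on the noise term no longer tends to $0$. A union over anchors is worse, because at early anchors the fluctuation is $O(1)$, not $O(\delta)$. Higher moments are not available. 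Moreover, at $\nu=\frac12$ the margin is only logarithmic, so no polynomial moment would suffice anyway.

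\emph{Step (c).} Separating bias and variance at the random index also fails. Interpolating $P^{k(\delta)}e_0^\delta$ requires $\|Aq_{k(\delta)}^\delta\|=O(\delta)$ at the stopping index, which you have not established. In addition, $\E[\|q_k^\delta\|^2]\lesssim M^2(k-1)^{-1}$ is small only for \emph{large} $k$, so an upper bound on $k(\delta)$ points in the wrong direction. On paths that stop early, $\|q_{k(\delta)}^\delta\|$ is $O(1)$, and summing moment bounds over anchors gives $O(M\ln K_{\max})$, not $o(1)$.

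The missing idea is to keep $z_{k(\delta)}^\delta=P^{k(\delta)}e_0^\delta+q_{k(\delta)}^\delta$ together. By \eqref{eqn:1delta} and the stopping rule, $\|Az_{k(\delta)}^\delta\|\le\|r_{k(\delta)}^\delta\|+\delta\le(\tau+1)\delta$ holds pathwise, with no fluctuation control at all. The interpolation inequality then only needs $\E[\|B^{-\nu}z_{k(\delta)}^\delta\|^2|Q]^{\frac12}\le\|w\|+\E[\|B^{-\nu}q_{k(\delta)}^\delta\|^2|Q]^{\frac12}$ to be \emph{bounded}, not small. The paper bounds it by a multiple of $n^\nu M\ln(\delta^{-1}+M)$, with $s$-dependent constants, and this is where the logarithm in the rate comes from. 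The event $Q$ in \eqref{eqn:Q} controls only the increments $\|B^{-\nu}N_j\Delta_j^\delta\|$ for $j\ge\overline{k}(\delta)^{(1+2\nu)s}$, in order to tame the tail of that sum at the random index. That is the source of the probability $\delta^{2(3s-2)}$ and of the constraint on $s$. Finally, the noise term is handled through $\E[k(\delta)|Q]\le\Prob(Q)^{-1}\E[k(\delta)]$ together with \eqref{eqn:Ek}, so no pathwise cap on $k(\delta)$ is ever needed. Pathwise control of the fluctuation is exactly what the stronger restriction $c_0<C_0$ of Theorem~\ref{thm:dp} provides, and even there the proof still relies on grouping $z_{k(\delta)}^\delta$.
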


\begin{theorem}\label{thm:dp}
Let Assumption \ref{ass} hold with $c_0<C_0$, and let the stopping index $k(\delta)$ be chosen by the discrepancy principle \eqref{eqn:discrepancy_2} with $\tau>1$.
Then there exists some  $c^{*}$ independent of $w$, $n$, $k(\delta)$ and $\delta$ such that, for any $\epsilon\in(0,\frac12)$, 
\begin{align*}
k(\delta)\leq \left\{\begin{array}{cc}
M\Big\lceil M^{-1}\Big(\big(\frac{n(c_\nu\|w\|+c^{*})}{(\tau-1) \delta}\big)^{\frac{2}{1+2\nu}}+1\Big)\Big\rceil,     & \nu<\frac12, \\
M\Big\lceil M^{-1}\Big(\big(\frac{n(c_\nu\|w\|+c^{*})}{e\epsilon(\tau-1) \delta}\big)^{\frac{1}{1-\epsilon}}+1\Big)\Big\rceil,     & \nu\geq \frac12.
\end{array}\right.
\end{align*}
Further, if $\nu\in(0,\frac12)$, then there holds
\begin{align*}
\|x_{k(\delta)}^\delta-x^\dag\|
\leq C_{w,\nu,\tau}^* 
n^\frac{1-2\nu}{2+4\nu} M^\frac{1}{1+2\nu} 
\delta^{\frac{2\nu}{1+2\nu}},
\end{align*}
where $C_{w,\nu,\tau}^*$ depends on $w$, $\nu$, $\tau$ and $(\tau-1)^{-1}$, but is independent of $k(\delta)$, $n$, $M$ and $\delta$.
\end{theorem}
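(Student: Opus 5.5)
The plan is to prove Theorem \ref{thm:dp} deterministically, i.e., for every realization of the indices, exploiting that $c_0<C_0$ is small enough that the stochastic perturbation is uniformly dominated. Set $e_k=x_k^\delta-x^\dag$. Within the $K$-th epoch, with $B_i:=A_i^*A_i$ and $B=n^{-1}A^*A$, the update reads
\begin{equation*}
e_{KM+t+1}=(I-c_0B)e_{KM+t}-c_0(B_{i}-B)(e_{KM+t}-e_{KM})+\tfrac{c_0}{n}A^*\xi .
\end{equation*}
Unrolling gives
\begin{equation*}
e_k=(I-c_0B)^k e_0+\sum_{j<k}(I-c_0B)^j\tfrac{c_0}{n}A^*\xi - c_0\sum_{j<k}(I-c_0B)^{k-1-j}(B_{i_j}-B)(e_j-e_{[j/M]M}).
\end{equation*}
This splits $e_k$ into a Landweber bias, a Landweber noise term and a variance-type remainder $r_k$. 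For the first two terms I will use the standard spectral estimates $\|B^{s}(I-c_0B)^k\|\le (s/(c_0 k))^{s}$. These give $\|A(I-c_0B)^ke_0\|\le \sqrt n c_\nu\|w\|k^{-(\nu+1/2)}$, together with a noise contribution to the residual that is bounded by $\delta$ and a noise error of order $\sqrt{k/n}\,\delta$ up to constants.

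The key step is to control $r_k$ uniformly in the realization. The increment $e_j-e_{[j/M]M}$ involves at most $M$ steps, so $\|e_j-e_{KM}\|\le c_0 M(L\|e_{KM}\|+\|A\|\cdot\text{residual})$ roughly. The product structure $(B_{i_j}-B)(\cdot)$ has norm $\le L$, and $\|A(I-c_0B)^{m}\|\lesssim (c_0 m)^{-1/2}$. Summing the resulting convolution yields a recursion for $a_K:=\|Ax_{KM}^\delta-y^\delta\|$ and $b_K:=\|e_{KM}\|$ with coefficients proportional to $c_0 M\sqrt L\|A\|\ln(\cdot)$. The logarithm comes from $\sum_m m^{-1/2}\cdot m^{-1/2}$-type sums over at most $n\sqrt L/\|A\|$ relevant scales; this is exactly why $C_0$ contains $\ln(2e^2n\sqrt L\|A\|^{-1})$. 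Under $c_0<C_0$ the coefficients are below a fixed fraction, so a discrete Gronwall/induction argument shows that $r_k$ contributes to the residual at most $n c^*k^{-(\nu+1/2)}$ (with an $\epsilon$-loss $k^{-(1-\epsilon)}$ when $\nu\ge\frac12$, since the $m^{-1}$ kernel caps the attainable decay near order $1$, using $\max_m m^{\epsilon}e^{-\dots}\le (e\epsilon)^{-1}$-type bounds). I expect this step to be the main obstacle: the induction must propagate a decay rate through the convolution, and I would first try the ansatz $a_K\le \delta+ n(c_\nu\|w\|+c^*)(KM)^{-\min(\nu+1/2,1-\epsilon)}$ and verify it by splitting the convolution sum at $j=k/2$.

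Given the residual bound $\|Ax_k^\delta-y^\delta\|\le \delta+n(c_\nu\|w\|+c^*)k^{-(\nu+1/2)}$, the stopping criterion $\le\tau\delta$ is met as soon as $n(c_\nu\|w\|+c^*)k^{-(\nu+1/2)}\le(\tau-1)\delta$. Rounding up to a multiple of $M$ gives the stated bound on $k(\delta)$, and analogously for $\nu\ge\frac12$. For the error with $\nu<\frac12$, I use the interpolation $\|e\|\le \|B^{\nu}\cdots\|$. I bound $\|e_{k(\delta)}\|$ by bias plus noise plus remainder. The bias is handled by the interpolation inequality $\|(I-c_0B)^k B^\nu w\|\le \|w\|^{1/(1+2\nu)}\|B^{1/2}(I-c_0B)^kB^\nu w\|^{2\nu/(1+2\nu)}$. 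Since the stopping index is the first admissible one, the previous anchor $k(\delta)-M$ has residual $>\tau\delta$, which together with the triangle inequality bounds the bias residual by $\lesssim M$-shifted $\delta$; this is where the factor $M^{1/(1+2\nu)}$ enters. The noise part is bounded by $\sqrt{k(\delta)/n}\,\delta$ with the above bound on $k(\delta)$, producing $n^{\frac{1-2\nu}{2+4\nu}}\delta^{2\nu/(1+2\nu)}$. The remainder is bounded using the same decay estimates from the key step. Collecting the three pieces gives the claimed rate.
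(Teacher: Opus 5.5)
You have a genuine gap in the error estimate for $\nu<\frac12$. Your bound on $k(\delta)$ is fine and follows the paper's route: a pathwise residual bound of order $\delta+n(c_\nu\|w\|+c^{*})k^{-(\nu+\frac12)}$, using $k^{-1}\ln k\le(e\epsilon)^{-1}k^{-(1-\epsilon)}$ when $\nu\ge\frac12$, followed by the discrepancy principle. The paper does not prove the underlying pathwise estimate $\|A\Delta_j^\delta\|\le(c_1+c_2\delta)M(j+M)^{-1}$; it takes it from Lemma~\ref{lem:Delta}. If you do reprove it, your recursion should not contain a term $L\|e_{KM}^\delta\|$. The increment satisfies $\Delta_{KM+t+1}^\delta=(I-c_0A_{i_{KM+t}}^*A_{i_{KM+t}})\Delta_{KM+t}^\delta-c_0g_K$, so it is driven by $g_K=n^{-1}A^*r_{KM}^\delta$ alone. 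A term proportional to $\|e_{KM}^\delta\|$, which does not decay at the needed rate and eventually grows with the noise, would spoil the $k^{-1}$ decay you need.

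\textbf{Where your error estimate breaks.} You interpolate only the bias $P^{k(\delta)}e_0^\delta$ and bound the variance part $q_{k(\delta)}^\delta$ separately by decay in $k(\delta)$. The interpolation needs $\|AP^{k(\delta)}e_0^\delta\|\lesssim\delta$. The discrepancy principle controls only the full residual, so you only get $\|AP^{k(\delta)}e_0^\delta\|\le(\tau+1)\delta+\|Aq_{k(\delta)}^\delta\|$, and $\|Aq_k^\delta\|$ is only known to be $O(nMk^{-1}\ln k)$. The fact that the anchor $k(\delta)-M$ violates the criterion is a lower bound on a residual. It therefore yields an upper bound on $k(\delta)$, which you already have, and no upper bound on the bias residual at $k(\delta)$. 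Both this step and the ``decay estimate'' for $\|q_{k(\delta)}^\delta\|$ need a quantitative lower bound on $k(\delta)$, and none is available pathwise. Nothing prevents early stopping on some path, say at $k(\delta)=M$. In that case $P^{k(\delta)}e_0^\delta$ and $q_{k(\delta)}^\delta$ need not be individually small, even though their images under $A$ nearly cancel. For the same reason, attributing the factor $M^{\frac{1}{1+2\nu}}$ to the previous anchor is wrong.

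\textbf{The missing idea.} Keep bias and variance together in $z_k^\delta:=P^ke_0^\delta+q_k^\delta=e_k^\delta-n^{-1}c_0\sum_{j=0}^{k-1}P^jA^*\xi$, and interpolate $z_{k(\delta)}^\delta$.
\begin{itemize}
\item Since $\|I-n^{-1}c_0A\sum_{j<k}P^jA^*\|\le1$, you get $\sqrt n\|B^{\frac12}z_{k(\delta)}^\delta\|=\|Az_{k(\delta)}^\delta\|\le\|r_{k(\delta)}^\delta\|+\delta\le(\tau+1)\delta$. This holds directly at the stopping index, whatever the size of $k(\delta)$.
\item On the other side, $\|B^{-\nu}z_k^\delta\|\le\|w\|+\sqrt n c_0\sum_{j}\|P^{k-1-j}B^{\frac12-\nu}\|\|A\Delta_j^\delta\|\le\sqrt nMC_{w,\nu}$ uniformly in $k$. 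This uses Lemma~\ref{lem:N}, the pathwise bound of Lemma~\ref{lem:Delta}, and the convolution estimate \eqref{eqn:sum_nu_1}, which carries the factor $(1-2\nu)^{-1}$; that is where $\nu<\frac12$ strictly is needed.
\item The interpolation inequality then gives $\|z_{k(\delta)}^\delta\|\le(n^{-\frac12}(\tau+1)\delta)^{\frac{2\nu}{1+2\nu}}(\sqrt nMC_{w,\nu})^{\frac{1}{1+2\nu}}$. This is the actual source of both $M^{\frac{1}{1+2\nu}}$ and $n^{\frac{1-2\nu}{2+4\nu}}$.
\end{itemize}
Adding your noise bound $\sqrt{2c_0k(\delta)/n}\,\delta$, with the upper bound on $k(\delta)$, completes the rate.
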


\begin{remark}
The convergence rates in Theorems \ref{thm:dp_E} and \ref{thm:dp} are largely comparable with that for SVRG equipped with the \textit{a priori} parameter choice in \cite{JinZhou:2026}, up to the log factor, and also that for the Landweber method \cite{EnglHankeNeubauer:1996}. These results indicate the {\rm(}near{\rm)} optimality of the error bounds for the SVRG with the discrepancy principle \eqref{eqn:discrepancy_2}. 
The assumption  $\nu\in(0,\frac12]$ ensures the boundedness of $\|B^{\frac12-\nu}\|$, which is necessary for deriving the convergence rate via the interpolation inequality. In Theorem \ref{thm:dp}, an additional boundedness condition involving $(1-2\nu)^{-1}$ is imposed, which further requires $\nu \in (0,\frac12)$.
The restriction $\nu\in(0,\frac12)$ is also present for the standard SGD and SVRG, and it is attributed to the presence of the variance component of the SVRG, which leads to the saturation phenomenon. 
\end{remark}

\begin{remark}
Note that the convergence rates in the mean squared sense and uniform sense require different conditions on the step size: the latter imposes a more stringent condition. 
The condition $c_0\leq {C_0}$ is used for the estimate $\|A(x^\delta_{KM+t}-x^\delta_{KM})\|\leq (c_1+c_2\delta)M(KM+t+M)^{-1}$ in Lemma A.3, while $\|g_K\|=n^{-1}\|A^* r_{KM}^\delta\|$ with $\|r_{KM}^\delta\|\leq n (c_\nu\|w\|+Mc^{*})(KM)^{-(\frac12+\nu)}+\delta$ when $\nu\in(0,\frac12)$. 
Thus, the influence of the stochastic gradient gap $A^*_{i_{KM+t}}A_{i_{KM+t}}(x^\delta_{KM+t}-x^\delta_{KM})$ becomes weaker relative to the full gradient $g_K$ as $k$ grows. This is crucial for deriving a uniform and deterministic upper bound for $\|e_{k(\delta)}^\delta\|$. 
\end{remark}
In the absence of the source condition in Assumption \ref{ass}(ii), both the finite-iteration termination property and the regularizing property of SVRG remain valid. That is, the discrepancy principle \eqref{eqn:decom_dp} is a valid \textit{a posteriori} stopping rule for SVRG. 
\begin{theorem}\label{thm:regularizing} Let Assumption \ref{ass}{\rm(i)} hold, and let the stopping index $k(\delta)$ be chosen by the discrepancy principle \eqref{eqn:discrepancy_2} with $\tau>1$. Then for any $\epsilon_0\in(0,1)$, there exist some $w_{\epsilon_0}$ that depends only on $\epsilon_0$ and some $c^{*}$ independent of $\epsilon_0$, 
$n$, $k(\delta)$ and $\delta$ such that the following statements hold.
 \begin{itemize}
    \item[{\rm(i)}] If $c_0<C_0$, then for any $\epsilon\in(0,\frac12)$, there hold
\begin{align*}
&k(\delta)\leq \Bigg(\frac{n  \epsilon_0^2+2\epsilon^{-1}c_0n(\tau-1)(c_0^{-1}\|w_{\epsilon_0}\|+M}c^{*}) \delta{2c_0(\tau-1)^2 \delta^2}\Bigg)^\frac{1}{1-\epsilon}+M+1,\\
&\lim_{\delta\to 0^+}\|x_{k(\delta)}^\delta-x^\dag\|\leq \tau(\tau-1)^{-1}\epsilon_0.
\end{align*}
\item[{\rm(i)}] If $c_0<\overline{C_0}$, then for any $p_0\in(0,1)$, there hold
\begin{align*}
   &\lim_{\delta\to 0^+}\Prob\Bigg(k(\delta)\leq M\Bigg\lceil M^{-1}\bigg\lceil \frac{\tfrac{9}{8}n  \epsilon_0^2+3\sqrt{n}\|w_{\epsilon_0}\|(\tau-1) \delta \ln(\delta^{-1}+2)}{c_0(\tau-1)^2 \delta^2}\bigg\rceil\Bigg\rceil\Bigg)=1,\\
&\lim_{\delta\to 0^+}\E[\|x_{k(\delta)}^\delta-x^\dag\|^2|Q_{p_0}]^\frac12\leq  \big(\sqrt{2}+3(\tau-1)^{-1}\big)\epsilon_0,
\end{align*}
for some event $Q_{p_0}$ satisfying $\Prob(Q_{p_0})\geq 1-p_0$.
\end{itemize}
\end{theorem}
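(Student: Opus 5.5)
The plan is a density argument that reduces Theorem \ref{thm:regularizing} to the source-condition estimates behind Theorems \ref{thm:dp_E} and \ref{thm:dp}. Fix $\epsilon_0\in(0,1)$. Because $x^\dag-x_0\in\mathcal{N}(A)^\perp=\overline{\mathcal{R}(B^{1/2})}$, and $\mathcal{R}(B^{\nu})$ is dense in $\mathcal{N}(A)^\perp$ for every $\nu>0$, we can pick $w_{\epsilon_0}\in\mathcal{N}(A)^\perp$ with
\begin{align*}
\|x^\dag-x_0-\hat e\|\leq\epsilon_0,\qquad \hat e:=B^{\nu}w_{\epsilon_0}.
\end{align*}
We fix $\nu$ here; $\nu=\frac12$ is the natural choice given the $\sqrt n$ factors in the statement. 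Since the iteration is affine in the initial error, the error $e_k^\delta=x_k^\delta-x^\dag$ splits into three parts:
\begin{itemize}
\item[(a)] a smooth part, which is SVRG (noise-free) started from the initial error $-\hat e$ and satisfies the source condition;
\item[(b)] a rough part, started from the initial error $-(x^\dag-x_0-\hat e)$, whose norm is at most $\epsilon_0$;
\item[(c)] the noise part driven by $\xi$.
\end{itemize}
For the same fixed realization of the indices $\{i_k\}$, each part is propagated by the same random linear operators. Hence the bias/variance and residual estimates of Section \ref{sec:conv} (Lemma A.3 and the related appendix bounds) apply to each part separately.

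\textbf{Step 1 (termination).} The rough part is handled with no rate at all. Its residual satisfies the standard Landweber-type summability $\sum_{K}c_0M\|A(\text{rough})_{KM}\|^2\lesssim n\epsilon_0^2$, in expectation or deterministically. This follows from the descent/telescoping identity for the energy $\|\cdot\|^2$ when $c_0<C_0$ or $c_0<\overline{C_0}$, with the stochastic gap controlled by the same lemma. The smooth part has residual $\lesssim n(c_0^{-1}\|w_{\epsilon_0}\|+Mc^*)k^{-(1-\epsilon)}$, as in the proof of Theorem \ref{thm:dp}. Suppose the discrepancy principle has not stopped by index $k$. Then $\tau\delta<\|Ax_{KM}^\delta-y^\delta\|\le\|r^{\rm smooth}\|+\|r^{\rm rough}\|+\delta$, so $(\tau-1)\delta$ is below the sum of the two residuals at every anchor up to $k$. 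Squaring, summing over anchors, and using the summability of the rough part together with the decay of the smooth part gives
\begin{align*}
k(\tau-1)^2\delta^2\lesssim n\epsilon_0^2+\epsilon^{-1}(\ldots)\delta\,k^{\epsilon}.
\end{align*}
This yields the stated bound on $k(\delta)$ in part (i). In part (ii) the same argument is run in expectation, with the $\ln(\delta^{-1}+2)$ coming from the $\ln$-convention of Theorem \ref{thm:dp_E}. Markov's inequality then gives the probability limit, as in Theorem \ref{thm:dp_E}.

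\textbf{Step 2 (error at $k(\delta)$).} We bound $\|e_{k(\delta)}^\delta\|$ by the norm of each part.
\begin{itemize}
\item The rough part has norm at most $\epsilon_0$, since the iteration map is nonexpansive for $c_0\le L^{-1}$ (in expectation, respectively uniformly via Lemma A.3).
\item The noise part is $O(\sqrt{k(\delta)}\,\delta)$. Using $k(\delta)\lesssim\epsilon_0^2\delta^{-2}$ from Step 1, it is bounded by roughly $\epsilon_0/(\tau-1)$ plus a term that vanishes as $\delta\to0$.
\item The smooth part is bounded through the interpolation inequality $\|B^{\nu}u\|\le\|B^{1/2}u\|^{\frac{2\nu}{1+2\nu}}\|u\|^{\frac1{1+2\nu}}$. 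The residual $\|B^{1/2}\cdot\|$ at the stopping index is controlled by the discrepancy: it is at most $(\tau+1)\delta+\|r^{\rm rough}\|\lesssim\tau\delta+\epsilon_0$. This leads to the stated constant $\tau(\tau-1)^{-1}\epsilon_0$ once $\delta\to0$.
\end{itemize}
In part (ii) everything is done conditionally on the high-probability event $Q_{p_0}$ on which $k(\delta)$ obeys the bound of Step 1. This produces the constant $(\sqrt2+3(\tau-1)^{-1})\epsilon_0$.

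\textbf{Main obstacle.} The hard part is the smooth term. The stopping index is random and depends on the rough and noise parts, so we cannot use a lower bound on $k(\delta)$. If $k(\delta)$ is small, the smooth error is not small from decay alone, and its smallness must come from the small residual at stopping. Applying interpolation to a random iterate requires separating bias and variance, with the variance controlled by the decaying gap $\|A(x_{KM+t}-x_{KM})\|\le(c_1+c_2\delta)M(KM+t+M)^{-1}$. The first thing to try is to reuse verbatim the rate argument of Theorems \ref{thm:dp_E} and \ref{thm:dp} with $\delta$ replaced by the effective level $\delta+\epsilon_0/\sqrt n$.
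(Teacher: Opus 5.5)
Your Step~2 has a genuine gap, and it sits exactly where you put the ``main obstacle'': the smooth component at a stopping index that does not tend to infinity. Interpolation cannot give a bound of order $\epsilon_0$ there. The best it yields is
\[
\|e^{\mathrm{smooth}}_{k(\delta)}\|\lesssim \Big(n^{-\frac12}\big((\tau+1)\delta+\|Ae^{\mathrm{rough}}_{k(\delta)}\|\big)\Big)^{\frac{2\nu}{1+2\nu}}\,\|B^{-\nu}e^{\mathrm{smooth}}_{k(\delta)}\|^{\frac{1}{1+2\nu}} .
\]
If $k(\delta)$ stays bounded, the rough residual is of size $\epsilon_0$ and $\|B^{-\nu}e^{\mathrm{smooth}}_{k(\delta)}\|$ is comparable to $\|w_{\epsilon_0}\|$. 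For $\nu=\frac12$ you are therefore left with $(\epsilon_0\|w_{\epsilon_0}\|)^{1/2}$ as $\delta\to0$. Without a source condition, $\|w_{\epsilon_0}\|$ grows at an uncontrolled rate as $\epsilon_0\to0$; for instance, if $x^\dag-x_0$ has exact spectral smoothness $\mu<\frac14$, then $\epsilon_0\|w_{\epsilon_0}\|\to\infty$. So neither $\tau(\tau-1)^{-1}\epsilon_0$ nor any bound vanishing with $\epsilon_0$ follows. Rerunning Theorems~\ref{thm:dp_E} and~\ref{thm:dp} with an effective level $\delta+\epsilon_0/\sqrt n$ runs into the same product. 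The root cause is that once you split the trajectory, the smooth piece alone has no small residual; only the full residual is small at $k(\delta)$.

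The missing idea is the dichotomy you rule out. Along any $\delta_t\to0$ you may assume, after passing to subsequences, either $k(\delta_t)\to\infty$ or $k(\delta_t)=k_*$ eventually.
\begin{itemize}
\item In the bounded case the paper does not split at all. For a fixed path, $e^{\delta_t}_{k_*}$ is affine and continuous in $\xi$, lies in $\mathcal N(A)^\perp$, and satisfies $\|Ae^{\delta_t}_{k_*}\|\le(\tau+1)\delta_t$. Hence its limit is $0$.
\item In the unbounded case the smooth term is harmless, since $\|P^{k}B^{\frac12}w_{\epsilon_0}\|\le(2c_0k)^{-\frac12}\|w_{\epsilon_0}\|\to0$. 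The paper splits only the deterministic term $P^{k}e_0^\delta$, where $\|P\|\le1$ gives the rough bound $\epsilon_0$ exactly. Your claimed nonexpansiveness of the SVRG map does not hold pathwise, and in expectation only up to an additive variance term.
\item The whole stochastic part $q^\delta_k$ is then handled at once by Lemma~\ref{lem:q_k_E}. It decays because Lemma~\ref{lem:Delta} needs no source condition.
\item The noise term is controlled through the defining equation $(\tau-1)\sqrt t\,\delta=\sqrt{n/(2c_0)}\,\epsilon_0+o(1)$, which gives $(\tau-1)^{-1}\epsilon_0$.
\end{itemize}
In part (ii) the same dichotomy is built into $Q_{p_0}=\widetilde Q\cup\widehat Q$, not into an event on which $k(\delta)$ obeys the Step~1 bound. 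Your Step~1 (squaring and summing anchor residuals against an energy bound for the rough run) is a plausible alternative to the pointwise residual bound of Corollary~\ref{cor:res}, but it does not repair Step~2.
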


\section{Convergence analysis}\label{sec:conv}

Now we establish Theorems \ref{thm:dp_E} and \ref{thm:dp}, i.e., the finite-iteration termination property and convergence rates of SVRG equipped with the discrepancy principle \eqref{eqn:discrepancy_2}, and Theorem \ref{thm:regularizing}, i.e., the regularizing property.

\subsection{Notation and key preliminary estimates}\label{ssec:notation}

First we introduce several shorthand notation. 
Below, we denote the SVRG iterates for the noisy data $y^\delta$ by $x_k^\delta$. 
For any $k\in \mathbb{N}$, we define 
\begin{align}
&e_{k}^\delta=x_{k}^\delta-x^\dag,
\quad \Delta_{k}^\delta=x_{k}^\delta-x_{[k/M]M}^\delta, \quad r_k^\delta = Ax_k^\delta-y^\delta, \label{eqn:rk}\\
&N_{k}=B-A_{i_k}^*A_{i_k}, \quad P=I-c_0B
\quad\mbox{and}\quad q_k^\delta=c_0\sum_{j=1}^{k-1} P^{k-1-j} N_j\Delta_j^\delta,\label{eqn:qk}
\end{align}
with $B:=\E[A_i^*A_i]=n^{-1}A^* A\;: X\rightarrow X$, and $I$ being the identity operator.
Note that $\E[N_{k}]=0$ and $\Delta_{KM}^\delta=0$ for any $K\in \mathbb{N}$.
We also follow the convention 
$\sum_{j=i}^{i'} R_j=0$ for any sequence $\{R_j\}_j$ and $i'< i$.

Next we derive several key estimates. In the analysis below we use technical estimates in Appendix \ref{app:prelim}.
We first bound the residual $r_{k}^\delta=Ax_{k}^\delta-y^\delta$ in terms of the weighted successive errors $\E[\|A\Delta_j^\delta\|^2]$ and $\|A\Delta_{j}^\delta\|$, respectively. 

\begin{lemma}\label{lem:res}
Let Assumption \ref{ass}{\rm(i)} hold. Then for any $k\geq 0$, the following estimates hold
\begin{align}
\E[\|r_{k}^\delta\|^2]^\frac12\leq &\|\E[r_{k}^\delta]\|+\E[\|r_{k}^\delta-\E[r_{k}^\delta]\|^2]^\frac12\nonumber\\ 
\leq &(\|AP^{k}e_{0}^\delta\| +\delta) +\sqrt{n} \bigg(\E[\|A\Delta_{k-1}^\delta\|^2]+\sum_{j=1}^{k-2}(k-1-j)^{-2}\E[\|A\Delta_j^\delta\|^2]\bigg)^\frac12,\label{eqn:estim-res-1}\\ 
\|r_{k}^\delta\|\leq &\|AP^{k}e_{0}^\delta\| +\delta+n \bigg(\|A\Delta_{k-1}^\delta\|+\sum_{j=1}^{k-2}(k-1-j)^{-1}\|A\Delta_j^\delta\|\bigg).\label{eqn:estim-res-2}
\end{align}
\end{lemma}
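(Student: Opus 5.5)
Our starting point is an error recursion for $e_k^\delta$ that separates the deterministic Landweber part from the stochastic fluctuation. From Algorithm \ref{alg:svrg} with $\eta_k=c_0$, and using $J'(x)=B(x-x^\dag)-n^{-1}A^*\xi$ and $A_{i_k}^*A_{i_k}=B-N_k$, we get
\begin{align*}
e_{k+1}^\delta = e_k^\delta - c_0\big(B\Delta_k^\delta - N_k\Delta_k^\delta + Be_{[k/M]M}^\delta\big) + c_0n^{-1}A^*\xi
= Pe_k^\delta + c_0N_k\Delta_k^\delta + c_0n^{-1}A^*\xi,
\end{align*}
because $e_k^\delta=e_{[k/M]M}^\delta+\Delta_k^\delta$. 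Unrolling this, and noting $\Delta_0^\delta=0$, gives
\begin{align*}
e_k^\delta = P^ke_0^\delta + c_0\sum_{j=0}^{k-1}P^{k-1-j}n^{-1}A^*\xi + q_{k+1}^\delta .
\end{align*}
We then apply $A$ and subtract $\xi$. Since $c_0n^{-1}\sum_{j=0}^{k-1}AP^{j}A^* = I-(I-c_0n^{-1}AA^*)^k$ and $AP^j=(I-c_0n^{-1}AA^*)^jA$, the noise terms combine to $-(I-c_0n^{-1}AA^*)^k\xi$. This yields
\begin{align*}
r_k^\delta = AP^ke_0^\delta - (I-c_0n^{-1}AA^*)^k\xi + Aq_{k+1}^\delta .
\end{align*}
By $c_0\le L^{-1}\le n\|A\|^{-2}$ the middle operator has norm at most one, so $\|AP^ke_0^\delta - (\cdots)\xi\|\le \|AP^ke_0^\delta\|+\delta$.

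We take expectations next. The index $i_j$ is independent of $\mathcal{F}_j$, while $\Delta_j^\delta$ is $\mathcal{F}_j$-measurable, so $\E[N_j\Delta_j^\delta]=\E[\E[N_j|\mathcal{F}_j]\Delta_j^\delta]=0$. Hence $\E[r_k^\delta]$ is the deterministic part and $r_k^\delta-\E[r_k^\delta]=Aq_{k+1}^\delta$. The first inequality in \eqref{eqn:estim-res-1} is then just Minkowski's inequality for $r_k^\delta=\E[r_k^\delta]+(r_k^\delta-\E[r_k^\delta])$.

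For the variance we use martingale orthogonality. For $j<l$, $\E[(AP^{k-1-j}N_j\Delta_j^\delta, AP^{k-1-l}N_l\Delta_l^\delta)]=0$, obtained by conditioning on $\mathcal{F}_l$. Therefore
\begin{align*}
\E[\|Aq_{k+1}^\delta\|^2]=c_0^2\sum_{j=1}^{k-1}\E\big[\|AP^{k-1-j}N_j\Delta_j^\delta\|^2\big].
\end{align*}
The key is to bound each summand. We use $\|AP^{m}A_i^*\|\le \|A_i\|\,\|AP^m\|$ together with the smoothing estimate $\|AP^mA^*\|\le n(c_0e(m))^{-1}$ from Appendix \ref{app:prelim}, roughly $\|B^{1/2}P^mB^{1/2}\|\le (ec_0m)^{-1}$. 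The idea is to write $N_j\Delta = B\Delta - A_{i_j}^*A_{i_j}\Delta$, so that the right side always involves $A\Delta$ rather than $\Delta$. Then $\E_{i_j}\|AP^mN_j\Delta\|^2\le \E_{i_j}\|AP^mA_{i_j}^*A_{i_j}\Delta\|^2 \le \|AP^m A^*\|\cdot\max_i\|A_iP^m\cdots\|$, and we bound this by $\sum_i n^{-1}\|AP^mA_i^*\|^2\|A_i\Delta\|^2\le n^{-1}\max_i\|AP^mA_i^*\|^2\|A\Delta\|^2$. Finally we use $\|AP^mA_i^*\|\le \|AP^mA^*\|\le n/(c_0 m)$ for $m\ge1$ (up to the constant $e$), and the bound $c_0\|A\|\|A_i\|\le \sqrt n$ for $m=0$. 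This gives $c_0^2\E\|AP^mN_j\Delta_j^\delta\|^2\le n\,m^{-2}\E\|A\Delta_j^\delta\|^2$ for $m\ge1$, and $\le n\E\|A\Delta_{k-1}^\delta\|^2$ for $m=0$, which is exactly \eqref{eqn:estim-res-1}.

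For the pathwise bound \eqref{eqn:estim-res-2} we instead use the triangle inequality on $Aq_{k+1}^\delta$ term by term, $c_0\|AP^mN_j\Delta_j^\delta\|\le c_0(\|AP^mB\Delta\|+\|AP^mA_{i_j}^*\|\|A_{i_j}\Delta\|)$. Each of these terms is at most $n m^{-1}\|A\Delta_j^\delta\|$ (and at most $n\|A\Delta\|$ when $m=0$), using $\|A_i\Delta\|\le\|A\Delta\|$ and the same smoothing bound. The main obstacle is getting the smoothing estimate $\|AP^mA^*\|\lesssim n/(c_0m)$ with the clean constants claimed. We plan to take it from Appendix \ref{app:prelim} via the spectral bound $\sup_{\lambda\in[0,c_0^{-1}]}\lambda(1-c_0\lambda)^m\le (c_0(m+1))^{-1}$, and to absorb any stray factor $e^{-1}\le1$ into the constants.
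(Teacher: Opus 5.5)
Your argument follows the paper's proof. It uses the same splitting $r_k^\delta=\E[r_k^\delta]+A(e_k^\delta-\E[e_k^\delta])$, the same bound $\|(I-c_0n^{-1}AA^*)^k\|\le 1$ for the noise part, and martingale orthogonality for the second moment. Your inline estimate $\E[\|AP^mN_j\Delta_j^\delta\|^2|\mathcal{F}_j]\le n^{-1}\sum_i\|AP^mA_i^*\|^2\|A_i\Delta_j^\delta\|^2$, combined with $\|AP^mA_i^*\|\le\|AP^mA^*\|=n\|P^mB\|$, is precisely the proof of the first inequality of Lemma~\ref{lem:N} with $R=AP^m$, so \eqref{eqn:estim-res-1} is fine. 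One slip: the fluctuation term is $q_k^\delta$, not $q_{k+1}^\delta$. By \eqref{eqn:qk}, $q_k^\delta$ already collects $j=1,\dots,k-1$, which is the sum you actually use afterwards.

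The pathwise step, however, does not deliver the constant $n$ in \eqref{eqn:estim-res-2}. Split $N_j\Delta=B\Delta-A_{i_j}^*A_{i_j}\Delta$ by the triangle inequality and use $\|P^mB\|\le(c_0m)^{-1}$ from Lemma~\ref{lem:kernel}. This gives $c_0\|AP^mB\Delta\|\le m^{-1}\|A\Delta\|$ and $c_0\|AP^mA_{i_j}^*\|\|A_{i_j}\Delta\|\le nm^{-1}\|A\Delta\|$, i.e.\ $(n+1)m^{-1}$ in total. At $m=0$ your bounds give $1+\sqrt n$, which exceeds $n$ when $n=2$. The sharper bound $\|P^mB\|\le(ec_0m)^{-1}$ that you mention repairs $m\ge1$, since $n+1\le en$, but it does not repair $m=0$.

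The fix is not to split. Let $E_i:Y_i\to Y$ be the canonical injection, so that $A_i=E_i^*A$ and $A_i^*=A^*E_i$. Then $N_j\Delta=A^*(n^{-1}I-E_{i_j}E_{i_j}^*)A\Delta$, and $n^{-1}I-E_{i_j}E_{i_j}^*$ has norm at most $\max(n^{-1},1-n^{-1})\le 1$. Hence for $m\ge1$,
$c_0\|AP^mN_j\Delta\|\le c_0\|AP^mA^*\|\|A\Delta\|=c_0n\|P^mB\|\|A\Delta\|\le nm^{-1}\|A\Delta\|$.
For $m=0$ the same chain gives $\le c_0\|A\|^2\|A\Delta\|\le n\|A\Delta\|$, since $\|A\|^2\le nL$. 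This is exactly the second inequality of Lemma~\ref{lem:N}, which is what the paper invokes.
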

\begin{proof}
By the splitting of the error $e_k^\delta$ in Lemma \ref{lem:bias-var}, we can decompose the residual $$r_k^\delta=Ax_{k}^\delta-A x^\dag-\xi=Ae_{k}^\delta-\xi$$ into
$
r_{k}^\delta=\E[r_k^\delta]+(r_{k}^\delta-\E[r_{k}^\delta]),
$
with the bias $\E[r_k^\delta]$ and the stochastic component $r_k^\delta-\E[r_k^\delta]$ given respectively by 
\begin{align}\label{eqn:bias-var_res}
\E[r_k^\delta]=&AP^{k}e_{0}^\delta +n^{-1}c_0A \sum_{j=0}^{k-1}P^{j}A^* \xi-\xi \quad \mbox{and} \quad  r_{k}^\delta-\E[r_{k}^\delta] = c_0A\sum_{j=1}^{k-1}P^{k-1-j}N_{j}\Delta_{j}^\delta.
\end{align} 
By the triangle inequality, the bias component $\E[r_{k}^\delta]$ is bounded by
\begin{align*}
\|\E[r_{k}^\delta]\|
\leq& \|AP^{k}e_{0}^\delta\| +\bigg\|n^{-1}c_0A \sum_{j=0}^{k-1}P^{j}A^* -I\bigg\|\delta.
\end{align*}
Note that 
\begin{align}
\bigg\|n^{-1}c_0A \sum_{j=0}^{k-1}P^{j}A^* -I\bigg\|=\bigg\|c_0B \sum_{j=0}^{k-1}P^{j} -I\bigg\|=\|(I-P^{k})-I\|\leq 1.\label{eqn:1delta}
\end{align}
Under Assumption \ref{ass}(i), by the identity $$\E[( AP^{k-1-i}N_{i}\Delta_{i}^\delta,AP^{k-1-j}N_{j}\Delta_{j}^\delta)|\mathcal{F}_i]=0$$ for any $i>j$, and Lemmas \ref{lem:kernel} and \ref{lem:N}, we have 
\begin{align}
&\E[\|r_{k}^\delta-\E[r_{k}^\delta]\|^2]=c_0^2\sum_{j=1}^{k-1}\E[\|AP^{k-1-j}N_{j}\Delta_{j}^\delta\|^2]\nonumber\\
\leq& c_0^2\sum_{j=1}^{k-1}\|AP^{k-1-j}B^\frac12\|^2\E[\|A\Delta_j^\delta\|^2]
\leq n c_0^2\sum_{j=1}^{k-1}\| P^{k-1-j}B\|^2\E[\|A\Delta_j^\delta\|^2]\nonumber\\
\leq& nc_0^2\|B\|^2\E[\|A\Delta_{k-1}^\delta\|^2]+ n \sum_{j=1}^{k-2}(k-1-j)^{-2}\E[\|A\Delta_j^\delta\|^2]\nonumber\\
\leq& n\bigg(\E[\|A\Delta_{k-1}^\delta\|^2]+ \sum_{j=1}^{k-2}(k-1-j)^{-2}\E[\|A\Delta_j^\delta\|^2]\bigg).\label{eqn:res-var}
\end{align}
Then, the triangle inequality 
$\E[\|r_{k}^\delta\|^2]^\frac12\leq \|\E[r_{k}^\delta]\|+\E[\|r_{k}^\delta-\E[r_{k}^\delta]\|^2]^\frac12$
completes the proof of the estimate \eqref{eqn:estim-res-1}.
Similarly, by Lemmas \ref{lem:kernel}, \ref{lem:N} and \ref{lem:bias-var}, we have
\begin{align}
\|r_{k}^\delta-\E[r_{k}^\delta]\|\leq& c_0\sum_{j=1}^{k-1}\|AP^{k-1-j}N_{j}\Delta_{j}^\delta\|
\leq \sqrt{n}c_0\sum_{j=1}^{k-1}\|AP^{k-1-j}B^\frac12\|\|A\Delta_j^\delta\|\nonumber\\
\leq &n c_0\sum_{j=1}^{k-1}\| P^{k-1-j}B\|\|A\Delta_j^\delta\|
\leq n \bigg( \|A\Delta_{k-1}^\delta\|+ \sum_{j=1}^{k-2}(k-1-j)^{-1}\|A\Delta_j^\delta\|\bigg).\label{eqn:res-var3}
\end{align}
Then the decomposition \eqref{eqn:bias-var_res} and the triangle inequality yield the estimate \eqref{eqn:estim-res-2}.
\end{proof}

Next, we bound the residuals $\E[\|r_{k}^\delta\|^2]^\frac12$ and $\|r_{k}^\delta\|$ in terms of the iteration index $k$.

\begin{theorem}\label{thm:res}
There exists $c^{*}>0$ independent of $w$, $k$, $n$, $M$ and $\delta$ such that, for any $k\geq 1$, the following statements hold.
\begin{enumerate}
\item[{\rm(i)}] If Assumption \ref{ass}{\rm(i)} holds, then 
\begin{align*}
\E[\|A(x_{k}^\delta-\E[x_{k}^\delta])\|^2]^\frac12&=\E[\|r_{k}^\delta-\E[r_{k}^\delta]\|^2]^\frac12\leq  \sqrt{n}Mc^{*}k^{-1}, \quad c_0<\overline{C_0}, \\
\|A(x_{k}^\delta-\E[x_{k}^\delta])\|&=\|r_{k}^\delta-\E[r_{k}^\delta]\|\leq nMc^{*}k^{-1}\ln k, \quad c_0<C_0.
\end{align*}
\item[{\rm(ii)}] If Assumption \ref{ass} holds, then with $c_{\nu}=(\nu+\frac12)^{\nu+\frac12}c_0^{-(\nu+\frac12)}$,
\begin{align*}
\|\E[r_{k}^\delta]\|\leq \sqrt{n}c_{\nu}k^{-(\nu+\frac12)}\|w\|+\delta,
\end{align*}
and with the convention $k^0:=\ln k$,
\begin{align*}
\E[\|r_k^\delta\|^2]^\frac12&\leq \sqrt{n}(c_\nu\|w\|+Mc^{*})k^{-\min(\nu+\frac12,1)}+\delta,   \quad c_0<\overline{C_0}, \\
\|r_k^\delta\|&\leq n (c_\nu\|w\|+Mc^{*})k^{-1}k^{\max(\frac12-\nu,0)}+\delta, \quad c_0<C_0. 
\end{align*}
\end{enumerate}
\end{theorem}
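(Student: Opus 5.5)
The plan is to bound the bias and the variance parts of the residual separately, using the decomposition \eqref{eqn:bias-var_res} from the proof of Lemma \ref{lem:res}. Part (ii) then follows by adding the two bounds.

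\emph{Bias part (ii).} From \eqref{eqn:bias-var_res} and \eqref{eqn:1delta} we have $\|\E[r_k^\delta]\|\le \|AP^ke_0^\delta\|+\delta$. Write $e_0^\delta=x_0-x^\dag=-B^\nu w$ by Assumption \ref{ass}(ii), and use $\|Ax\|=\sqrt n\|B^{1/2}x\|$. This gives $\|AP^ke_0^\delta\|\le \sqrt n\|P^kB^{\nu+\frac12}\|\|w\|$. By spectral calculus on $\mathrm{spec}(c_0B)\subset[0,1]$, and since $c_0\|B\|\le c_0L\le 1$, we get $\sup_{\lambda\in[0,1]}(1-\lambda)^k\lambda^{s}\le (s/k)^{s}$ with $s=\nu+\frac12$. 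Hence $\|P^kB^{s}\|\le c_0^{-s}s^sk^{-s}=c_\nu k^{-(\nu+\frac12)}$, which is the first claim of (ii). The remaining two bounds in (ii) then follow from (i) together with the triangle inequality $\E[\|r_k^\delta\|^2]^{1/2}\le\|\E r_k^\delta\|+\E[\|r_k^\delta-\E r_k^\delta\|^2]^{1/2}$. In the uniform case, $k^{-1}\ln k$ is absorbed into $k^{-1}k^{\max(\frac12-\nu,0)}$ when $\nu<\frac12$, using the convention $k^0=\ln k$. For $\nu\ge\frac12$ we note that $k^{-(\nu+\frac12)}\le k^{-1}$.

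\emph{Variance part (i).} The core task is to bound the increments $\|A\Delta_j^\delta\|$. Lemma \ref{lem:res} gives
\[\E[\|r_k^\delta-\E r_k^\delta\|^2]\le n\Big(\E\|A\Delta_{k-1}^\delta\|^2+\sum_{j=1}^{k-2}(k-1-j)^{-2}\E\|A\Delta_j^\delta\|^2\Big).\]
The pathwise analogue holds with weights $(k-1-j)^{-1}$. It therefore suffices to show the following: $\E[\|A\Delta_j^\delta\|^2]^{1/2}\le c\,M(j+M)^{-1}$ when $c_0<\overline{C_0}$, and $\|A\Delta_j^\delta\|\le c\sqrt n M(j+M)^{-1}$ pathwise when $c_0<C_0$. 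This is the kind of estimate the paper's remark attributes to Lemma A.3. Given these, the convolution sums are handled by splitting at $j=k/2$, which gives $\sum_j(k-1-j)^{-2}(j+M)^{-2}\lesssim k^{-2}$ and $\sum_j (k-1-j)^{-1}(j+M)^{-1}\lesssim k^{-1}\ln k$. This yields $\sqrt n Mc^*k^{-1}$ and $nMc^*k^{-1}\ln k$ respectively.

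\emph{Bounding $\Delta$ (main obstacle).} Within an epoch, $\Delta_{KM+t+1}=(I-c_0A_{i}^*A_{i})\Delta_{KM+t}-c_0g_K$, so $\|A\Delta_{KM+t}\|\le \sum_{s<t}\|A(I-c_0A_i^*A_i)\cdots\|\,c_0\|g_K\|$. This is at most roughly $c_0 t\|A\|\,n^{-1}\|A^*r_{KM}^\delta\|$, and $\|A^*r_{KM}^\delta\|=\|A^*(\E r+r-\E r)\|$. The smoothing factor $A^*AP^{KM}$ gives the bias term a decay like $(KM)^{-1}$; here the noise contribution $A^*(I-P^{KM})\xi$ must be controlled by treating it as $P^{KM}A^*\xi$-type terms with decay. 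The variance term $A^*(r-\E r)$ again involves earlier $\Delta_j$. The result is a recursive inequality of the form $a_k\le C_1Mk^{-1}+C_2c_0M\sum_{j<k}\rho_{k,j}a_j$, and the plan is to close it by induction on $k$ with ansatz $a_j\le cM(j+M)^{-1}$. The smallness conditions $c_0<\overline{C_0}$ and $c_0<C_0$ are exactly what make the induction close. In the pathwise case the $\ln$-weighted sums produce the $\ln(2e^2n\sqrt L\|A\|^{-1})$ factor in $C_0$. I expect this inductive closing, with careful tracking of the $n$, $M$ and $L$ dependence so that $c^*$ is independent of $w,k,n,M,\delta$, to be the hardest step. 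The $w$-dependence has to be avoided by bounding $\|A^*AP^{k}e_0\|$ via $\|A\|$-type constants times $\|e_0\|$-free quantities only through $c_\nu\|w\|$ in (ii).
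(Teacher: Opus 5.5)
Your reduction matches the paper's. The bias bound is exactly \eqref{eqn:approx_res}, i.e.\ Lemma~\ref{lem:kernel} with $s=\nu+\frac12$. Part (i) comes from inserting the increment bounds of Lemma~\ref{lem:Delta} into \eqref{eqn:res-var} and \eqref{eqn:res-var3} and summing with Lemma~\ref{lem:sums}; the paper uses partial fractions there, and your split at $j=k/2$ works equally well. The paper does not prove the increment bound in this argument but imports it from \cite{JinZhou:2026}. Your proposal goes wrong exactly where it departs from that input.

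\emph{The $\sqrt n$ factor.} Your pathwise target $\|A\Delta_j^\delta\|\le c\sqrt{n}M(j+M)^{-1}$ is too weak by a factor $\sqrt n$. The pathwise estimate \eqref{eqn:res-var3} already carries the prefactor $n$: one $\sqrt n$ comes from the pathwise bound in Lemma~\ref{lem:N}, and one from $\|AP^mB^{1/2}\|=\sqrt n\|P^mB\|$. So your target gives $n^{3/2}Mk^{-1}\ln k$, and $c^*$ would grow with $n$. You need $\|A\Delta_j^\delta\|\le(c_1+c_2\delta)M(j+M)^{-1}$ with $c_1,c_2$ independent of $n$, as stated in Lemma~\ref{lem:Delta}.

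\emph{The noise decay in your increment sketch.} Your sketch cannot deliver the $(j+M)^{-1}$ decay of the noise contribution. Once you pull out $\|A\|$, you must bound $\|g_K\|$, whose noise part is $n^{-1}P^{KM}A^*\xi$. But $\|P^{KM}A^*\|=\sqrt n\|BP^{2KM}\|^{1/2}\le\sqrt n(2c_0KM)^{-1/2}$ is in general no better than $k^{-1/2}$. This would leave a residual fluctuation of order $\delta k^{-1/2}$ rather than $k^{-1}$.

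The fix is to keep $A$ next to $A^*$. Since $\|A_ix\|\le\|Ax\|$, the within-epoch recursion gives $\|A\Delta^\delta_{KM+t+1}\|\le(1+c_0\sqrt L\|A\|)\|A\Delta^\delta_{KM+t}\|+c_0\|Ag_K\|$. Hence $\|A\Delta^\delta_{KM+t}\|\le e\,c_0t\|Ag_K\|$ once $c_0\sqrt LM\|A\|\le1$; this is what the factor $\sqrt LM\|A\|$ in $C_0$ pays for. The noise part of $Ag_K=n^{-1}AA^*r^\delta_{KM}$ then satisfies $\|n^{-1}AA^*(I-c_0n^{-1}AA^*)^{KM}\xi\|\le(c_0KM)^{-1}\delta$, which yields the needed $\delta M(k+M)^{-1}$. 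The fluctuation part of $Ag_K$ then drives the induction you describe.

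\emph{Small points, shared with the paper.} For $\nu\ge\frac12$ your absorption step needs $\ln k\ge1$. At $k=1$ the uniform bound in (ii), with $k^0=\ln k$, reads $\|r_1^\delta\|\le\delta$, which is false in general, so it should be stated for $k\ge3$. For $\nu<\frac12$, the absorption $\ln k\le(e(\frac12-\nu))^{-1}k^{\frac12-\nu}$ makes the constant depend on $\nu$.
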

\begin{proof}
It follows from Lemma \ref{lem:Delta} (for $c_0<\overline{C_0}$), the estimate \eqref{eqn:res-var} in the proof of Lemma \ref{lem:res} and the assumption $\delta\leq 1$
that, for any $k\geq 2$, 
\begin{align*}
\E[\|r_{k}^\delta-\E[r_{k}^\delta]\|^2]^\frac12
\leq M\sqrt{n (c_1+c_2)\bigg[(k-1+M)^{-2}+\sum_{j=1}^{k-2}(k-1-j)^{-2}(j+M)^{-2}\bigg]}.
\end{align*}
The estimate \eqref{eqn:sum_2_2} in Lemma \ref{lem:sums} with $k-1\geq 1$ implies \begin{equation}\label{eqn:res-var2} 
\E[\|r_{k}^\delta-\E[r_{k}^\delta]\|^2]^\frac12\leq M\sqrt{8n (c_1+c_2)}k^{-1}.
\end{equation}
This estimate also holds for $k=1$ since $r^\delta_1=\E[r^\delta_1]$.
Assumption \ref{ass}(ii) and Lemma \ref{lem:kernel} indicate
\begin{align}\label{eqn:approx_res}
\|AP^{k}e_{0}^\delta\|
=&\sqrt{n} \|B^\frac12 P^{k}B^\nu w\|
\leq \sqrt{n} \|P^{k}B^{\nu+\frac12}\|\|w\|\leq\sqrt{n}c_{\nu}k^{-(\nu+\frac12)}\|w\|,
\end{align}
with the constant $c_{\nu}=(\nu+\frac12)^{\nu+\frac12}c_0^{-(\nu+\frac12)}$, and thus by Lemma \ref{lem:res}
\begin{align}
\|\E[r_{k}^\delta]\|\leq \|AP^{k}e_{0}^\delta\| +\delta\leq \sqrt{n}c_{\nu}k^{-(\nu+\frac12)}\|w\|+\delta.\label{eqn:res0_E}
\end{align}
Then, combining the estimates \eqref{eqn:res-var2} and \eqref{eqn:res0_E} gives the desired estimates when $c_0<\overline{C_0}$. Similarly, Lemma \ref{lem:Delta} (when $c_0<C_0$) and the estimate \eqref{eqn:res-var3} in the proof of Lemma \ref{lem:res} imply
\begin{align}\label{eqn:res0_as}
\|r_{k}^\delta\|
&\leq \|AP^{k}e_{0}^\delta\| +\delta+\|r_{k}^\delta-\E[r_{k}^\delta]\|,\\  \|r_{k}^\delta-\E[r_{k}^\delta]\|&\leq nM (c_1+c_2)\bigg[ (k-1+M)^{-1}+\sum_{j=1}^{k-2}(k-1-j)^{-1}(j+M)^{-1}\bigg],
\end{align} 
Note that $r_{1}^\delta=\E[r_{1}^\delta]$. 
For any $k-1\geq 1$, with the estimate \eqref{eqn:sum_1_1} in Lemma \ref{lem:sums}, we deduce $$\|r_{k}^\delta-\E[r_{k}^\delta]\|\leq 6 (c_1+c_2)nM k^{-1}\ln k.$$
These two estimates and the inequality \eqref{eqn:approx_res} yield the desired assertion (when $c_0<C_0$).
\end{proof}

Note that the stochastic gradient employed in SVRG is unbiased. Thus, the monotonically decreasing property of the residual in the Landweber method holds for the expected residual of SVRG; see the next lemma.
\begin{lemma}\label{lem:mono_res}
Let Assumption \ref{ass}{\rm(i)} be fulfilled. Then for any $k\geq 0$, there holds
$$\|\E[r_{k+1}^\delta]\|\leq\|\E[r_{k}^\delta]\|.$$
\end{lemma}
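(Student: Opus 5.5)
The plan is to use the explicit formula for the bias part of the residual that appears in the proof of Lemma \ref{lem:res}, namely the decomposition \eqref{eqn:bias-var_res},
\begin{equation*}
\E[r_k^\delta]=AP^{k}e_{0}^\delta +n^{-1}c_0A \sum_{j=0}^{k-1}P^{j}A^* \xi-\xi .
\end{equation*}
This formula comes from Lemma \ref{lem:bias-var}. It shows that $\E[r_k^\delta]$ coincides with the Landweber residual for the noisy data, so the claim should reduce to the classical monotonicity of Landweber residuals.

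First I will rewrite $\E[r_k^\delta]$ in a closed operator form. Using $e_0^\delta=x_0-x^\dag$ and $B=n^{-1}A^*A$, the operator $A\sum_{j=0}^{k-1}P^j A^*$ can be handled with $c_0B\sum_{j=0}^{k-1}P^j=I-P^k$, exactly as in \eqref{eqn:1delta}. Setting $\widetilde P:=I-c_0n^{-1}AA^*$ on $Y$, we have $AP^j=\widetilde P^jA$. Hence $c_0n^{-1}A\sum_{j=0}^{k-1}P^jA^*=I-\widetilde P^k$, which gives
\begin{equation*}
\E[r_k^\delta]=\widetilde P^{k}Ae_0^\delta-\widetilde P^k\xi=\widetilde P^k(Ax_0-y^\delta)=\widetilde P^k r_0^\delta .
\end{equation*}
An alternative route is to take expectations directly in the SVRG update of Algorithm \ref{alg:svrg}. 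Since $\E[A_{i_k}^*A_{i_k}\,|\,\mathcal{F}_k]=B$, the variance-reduction terms cancel in expectation, and one obtains $\E[x_{k+1}^\delta]=\E[x_k^\delta]-c_0n^{-1}A^*(A\E[x_k^\delta]-y^\delta)$. Applying $A$ and subtracting $y^\delta$ then yields the recursion $\E[r_{k+1}^\delta]=\widetilde P\,\E[r_k^\delta]$. I will use whichever of the two routes is cleaner; the recursion is the more direct one.

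Next I will bound the norm of $\widetilde P$. It is self-adjoint, and by Assumption \ref{ass}(i) we have $c_0n^{-1}\|AA^*\|=c_0n^{-1}\|A\|^2\le c_0 n^{-1}\sum_i\|A_i\|^2\le c_0L\le 1$. Its spectrum therefore lies in $[0,1]$, so $\|\widetilde P\|\le1$. It follows that $\|\E[r_{k+1}^\delta]\|=\|\widetilde P\E[r_k^\delta]\|\le\|\E[r_k^\delta]\|$.

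The main point needing care is justifying that the expectation commutes with the bounded linear operators and the conditional expectation. This holds because $x_k^\delta$ is $\mathcal F_k$-measurable and $i_k$ is independent of $\mathcal F_k$. I will also need to verify the estimate $\|A\|^2\le nL$ used for the spectral bound. Neither step is deep.
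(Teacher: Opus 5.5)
Your proof is correct and follows essentially the paper's route: the paper also takes expectations in the SVRG update to obtain the Landweber recursion $\E[e_{k+1}^\delta]=\E[e_k^\delta]-c_0n^{-1}A^*\E[r_k^\delta]$, which is your $\E[r_{k+1}^\delta]=(I-c_0n^{-1}AA^*)\E[r_k^\delta]$. The only difference is in the last step: the paper expands $\|\E[r_{k+1}^\delta]\|^2-\|\E[r_k^\delta]\|^2$ to get an explicit decrease of $c_0n^{-1}(2-c_0L)\|A^*\E[r_k^\delta]\|^2$, while you use that the self-adjoint operator $I-c_0n^{-1}AA^*$ has spectrum in $[0,1]$; both rest on the same bound $\|A\|^2\le nL$.
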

\begin{proof}
By the definition in Algorithm \ref{alg:svrg}, $\E[e_{k+1}^\delta] =\E[e_{k}^\delta]-c_0n^{-1}A^*\E[r_{k}^\delta]$. Consequently, 
\begin{align*}
\|\E[r_{k+1}^\delta]\|^2-\|\E[r_k^\delta]\|^2= &\|\E[r_{k+1}^\delta]-\E[r_k^\delta]\|^2+2( \E[r_{k+1}^\delta]-\E[r_k^\delta], \E[r_k^\delta])\\
= &\|A(\E[e_{k+1}^\delta]-\E[e_k^\delta])\|^2+2(e A(\E[e_{k+1}^\delta]-\E[e_k^\delta]), \E[r_k^\delta]) \\
= &c_0^2n^{-2}\|AA^*\E[r_{k}^\delta]\|^2-2c_0n^{-1}( AA^*\E[r_{k}^\delta], \E[r_k^\delta])\\
\leq &c_0^2n^{-1}L\|A^*\E[r_{k}^\delta]\|^2-2c_0n^{-1}\| A^*\E[r_{k}^\delta]\|^2\\
\leq& -c_0n^{-1}(2-c_0L)\| A^*\E[r_{k}^\delta]\|^2\leq 0.
\end{align*}
This completes the proof of the lemma.
\end{proof}

\subsection{Proof of Theorem \ref{thm:dp_E}}
Now we can prove Theorem \ref{thm:dp_E}, i.e., the finite-iteration termination property in probability and convergence rates in conditional expectation on an event $Q$ of high probability for SVRG. We first define the event 
\begin{align}\label{eqn:Q}
Q=\{\|B^{-\nu}N_j\Delta_j^\delta\|\leq j^{1-\frac1s} \rho, \;\forall j\geq \overline{k}(\delta)^{(1+2\nu)s}\},    
\end{align}
where the exponent $s$ and the constant $\rho$ are given by 
\begin{align*} 
s&\geq \max\Big(1,\frac13\Big(1+4\frac{\ln M}{\ln n}\Big)\Big),\\ \rho&=\sqrt{2(c_1+c_2\delta^2)}\|A\|^{1-2\nu}\left(\frac{\tau-1}{2c_\nu \|w\|}\right)^{3s-2}n^{-\frac{3s-1-2\nu}{2}}M,
\end{align*}
and the integer $\overline{k}(\delta)$ is defined by 
(with the convention $(\delta^{-1})^0:=\ln (\delta^{-1})$) \begin{equation}\label{eqn:def_bar_k}
\overline{k}(\delta):=M\Bigg\lceil M^{-1}\bigg\lceil\bigg(\frac{2\sqrt{n}c_\nu\|w\|}{\tau-1}\bigg)^{\frac{2}{1+2\nu}}\delta^{-1}\big(\delta^{-1}\big)^{\max(\frac{2}{1+2\nu}-1,0)}\bigg\rceil\Bigg\rceil.    
\end{equation}
Then by  Proposition \ref{prop:Q} in the appendix, there holds 
\begin{equation*} 
\Prob(Q)\geq \tfrac12\quad\mbox{and}\quad \lim_{\delta\to 0^+}\Prob(Q)=1;
\end{equation*}
The proof proceeds as follows. We first establish the finite-iteration termination property, and then decompose the conditional expectation $\E[\|e_{k(\delta)}^\delta\|^2|Q]^\frac12$ into noise propagation error and the remaining terms, and bound them in terms of the noise level $\delta$ via the interpolation inequality.

\begin{proof}[Proof of Theorem \ref{thm:dp_E}] The proof is broken down into three steps.\\
\underline{Step 1. Finite iteration termination property.}
Following the analysis in the proof of \cite[Theorem 1.1]{JahnJin:2020}, we define 
\begin{align*}
k_0(\delta)=\min\{k\in \mathbb{N}:\;\|\E[r_{k}^\delta]\|\leq \tfrac12(\tau+1) \delta\}.
\end{align*}
The estimate on $\|\E[r_{k}^\delta]\|$ in Theorem \ref{thm:res} gives
\begin{align*}
k_0(\delta)\leq\Bigg\lceil\bigg(\frac{2\sqrt{n}c_\nu \|w\|}{(\tau-1) \delta}\bigg)^{\frac{2}{1+2\nu}}\Bigg\rceil.
\end{align*}
Note that by the definition \eqref{eqn:def_bar_k} of $\overline{k}(\delta)$, we have $\overline{k}(\delta)\geq k_0(\delta)$.
By the monotonicity of the expected residual $\|\E[r_t^\delta]\|$ in Lemma \ref{lem:mono_res} and the definition of $k_0(\delta)$, for any $t\geq k_0(\delta)$, there holds
\begin{align*}
\|\E[r^\delta_{t}]\|\leq
\|\E[r^\delta_{k_0(\delta)}]\|
\leq \tfrac12(\tau+1) \delta.
\end{align*}
By the definition \eqref{eqn:def_bar_k} of $\overline{k}(\delta)$, we have $M\lfloor M^{-1}\overline{k}(\delta)\rfloor = \bar{k}(\delta)$. Then by Lemma \ref{lem:Chebyshev} with $\tau'=\frac12 (\tau+1)$, there holds with $c_{\tau,n,M}=4(\tau-1)^{-2}n M^2(c^{*})^2$ that
\begin{align}
&\Prob\Big(k(\delta)\leq \overline{k}(\delta)\Big)\geq 1-c_{\tau,n,M} \delta^{-2}\overline{k}(\delta)^{-2}\nonumber\\
\geq &1-c_{\tau,n,M} \bigg(\frac{\tau-1}{2\sqrt{n}c_\nu\|w\|}\bigg)^{\frac{4}{1+2\nu}}\bigg(\big(\delta^{-1}\big)^{\max(\frac{2}{1+2\nu}-1,0)}\bigg)^{-2}\xrightarrow{\;\;\;\delta \to 0^+\;\;} 1, \nonumber
\end{align}
which implies the finite-iteration termination property:
\begin{align}\label{eqn:k_bar}
\lim_{\delta\to 0^+}\Prob\Big(k(\delta)\leq \overline{k}(\delta)\Big)=1.
\end{align}
\underline{Step 2: Preliminary decomposition of the error.}
For any $\nu\in(0,\frac12]$, using Lemma \ref{lem:bias-var} and the identity $e_{k}^\delta=\E[e_{k}^\delta]+(e_{k}^\delta-\E[e_{k}^\delta])$ for any fixed $k$, with the shorthand notation $q_k^\delta$ in \eqref{eqn:qk}, we derive
\begin{align}
e_{k(\delta)}^\delta
=P^{k(\delta)}e_0^\delta+q_{k(\delta)}^\delta+n^{-1}c_0\sum_{j=0}^{k(\delta)-1}P^j A^*\xi
:=z_{k(\delta)}^\delta+n^{-1}c_0\sum_{j=0}^{k(\delta)-1}P^j A^* \xi.\label{eqn:decom_dp}
\end{align}
By the triangle inequality and Lemma \ref{lem:kernel}, we derive 
\begin{align}
\bigg\|n^{-1}c_0\sum_{j=0}^{k(\delta)-1}P^j A^* \xi\bigg\|&\leq \sqrt{\frac{c_0^2}{n}}\bigg\|\sum_{j=0}^{k(\delta)-1}P^j B^{\frac12}\bigg\| \delta
\leq \sqrt{\frac{c_0^2}{n}}\sum_{j=0}^{k(\delta)-1}\|P^j B^{\frac12}\|\delta\nonumber\\
&\leq \sqrt{\frac{c_0}{2n}}\sum_{j=0}^{k(\delta)-1}j^{-\frac12}\delta
\leq \sqrt{\frac{2c_0}{n}}\sqrt{k(\delta)}\delta.\label{eqn:noise0}
\end{align} 
Note that the stopping index $k(\delta)=k(\delta,\theta)$ with the random realization $\theta$ is measurable with respect to the filtration $\mathcal{F}$. 
Then by Proposition \ref{prop:Q}, the triangle inequality, the interpolation inequality in Lemma \ref{lem:moment} and Jensen's inequality, we can bound the conditional expectation $\E[\|e_{k(\delta)}^\delta\|^2|Q]^\frac12$ by 
\begin{align*}
\E[\|e_{k(\delta)}^\delta\|^2|Q]^\frac12\leq& \E[\|z_{k(\delta)}^\delta\|^2|Q]^\frac12+\E\bigg[\bigg\|n^{-1}c_0\sum_{j=0}^{k(\delta)-1}P^j A^* \xi\bigg\|^2\bigg|Q\bigg]^\frac12\\
\leq& \E[\|B^\frac12 z_{k(\delta)}^\delta\|^\frac{4\nu}{1+2\nu}\|B^{-\nu}z_{k(\delta)}^\delta\|^\frac{2}{1+2\nu}|Q]^\frac12+\sqrt{2c_0 n^{-1}\E[k(\delta)|Q]}\delta.
\end{align*}
By the defining identities $B=n^{-1}A^*A$ and $r^\delta_{k(\delta)}=Ae_{k(\delta)}^\delta-\xi$, the decomposition \eqref{eqn:decom_dp}, the triangle inequality and the inequality \eqref{eqn:1delta}, we can bound the term $\|B^\frac12 z_{k(\delta)}^\delta\|$ by
\begin{align*}
\sqrt{n}\|B^\frac12 z_{k(\delta)}^\delta\|=&\|A z_{k(\delta)}^\delta\|= \bigg\|Ae_{k(\delta)}^\delta-n^{-1}c_0A\sum_{j=0}^{k(\delta)-1}P^j A^* \xi\bigg\|\\
\leq& \|r_{k(\delta)}^\delta\|+\bigg\|I-n^{-1}c_0A\sum_{j=0}^{k(\delta)-1}P^j A^* \bigg\|\delta
\leq \|r_{k(\delta)}^\delta\|+\delta.
\end{align*}
Then, the discrepancy principle \eqref{eqn:discrepancy_2} gives 
\begin{align}
\sqrt{n}\|B^\frac12 z_{k(\delta)}^\delta\|
\leq (\tau+1)\delta.\label{eqn:z}
\end{align}
\underline{Step 3. Bounds on the two terms.}
By Jensen's inequality, we obtain 
\begin{align}\label{eqn:err_E_dp}
\E[\|e_{k(\delta)}^\delta\|^2|Q]^\frac12\leq (n^{-\frac12}(\tau+1)\delta)^\frac{2\nu}{1+2\nu}\E[\|B^{-\nu}z_{k(\delta)}^\delta\|^2|Q]^\frac{1}{2+4\nu}+\sqrt{2c_0 n^{-1}\E[k(\delta)|Q]}\delta:= \mathfrak{I} + \mathfrak{K}
\end{align}
Next we bound the two terms $\mathfrak{I}$ and $\mathfrak{K}$ separately.
Then by Lemma \ref{lem:z_k}, we have 
\begin{align*}
\E[\|B^{-\nu}z_{k(\delta)}^\delta\|^2|Q]^\frac12\leq \overline{C}_{w,\nu,\tau} \max\big(1,\tfrac{\tau-1}{2c_\nu \|w\|}\big)^{3s}s n^\nu M\ln\big(\delta^{-1}+M\big),
\end{align*}
where the constant $\overline{C}_{w,\nu,\tau}$ is independent of $s$, $n$, $M$ and $\delta$. This implies
\begin{align}
\mathfrak{I}\leq& \overline{C}_{w,\nu,\tau}^\frac{1}{1+2\nu}\big(n^{-\frac12}(\tau+1)\delta\big)^\frac{2\nu}{1+2\nu} \big( \max\big(1,\tfrac{\tau-1}{2c_\nu \|w\|}\big)^{3s}s n^\nu
M\big)^\frac{1}{1+2\nu} \ln^\frac{1}{1+2\nu} (\delta^{-1}+M)\nonumber\\
\leq& \overline{C}_{w,\nu,\tau}^\frac{1}{1+2\nu}(\tau+1)^\frac{2\nu}{1+2\nu} \big(s\max\big(1,\tfrac{\tau-1}{2c_\nu \|w\|}\big)^{3s}\big)^\frac{1}{1+2\nu} 
M^\frac{1}{1+2\nu} \delta^\frac{2\nu}{1+2\nu} \ln^\frac{1}{1+2\nu} (\delta^{-1}+M)\label{eqn:err_E_dp_2}.
\end{align}
Finally, the estimates \eqref{eqn:Ek} with $\tau'=\frac12(\tau+1)$, \eqref{eqn:bar_k_bd} and \eqref{eqn:bark-2} and the elementary inequality 
$$\delta^{-1}\big(\delta^{-1}\big)^{\max(\frac{2}{1+2\nu}-1,0)}\leq \delta^{-\frac{2}{1+2\nu}}\ln\big(\delta^{-1}+M\big)$$ yield that for any $\nu\in(0,\frac12]$, there holds
\begin{align*}
\mathfrak{K}\leq & \sqrt{2c_0 n^{-1}\Prob(Q)^{-1}\E[k(\delta)]}\delta\leq 2\sqrt{c_0 n^{-1}\big(2\bar{k}(\delta)+c_{\tau,n,M}\delta^{-2}\bar{k}(\delta)^{-1}\big)}\delta\\
\leq& 2\sqrt{c_0 n^{-1}}\Bigg(2c_{w,\nu,\tau}\big( n^{\frac{1}{1+2\nu}}\delta^{-1}\big(\delta^{-1}\big)^{\max(\frac{2}{1+2\nu}-1,0)}+M\big)\\
&\qquad\qquad\quad+4 (2c_\nu \|w\|)^{-\frac{2}{1+2\nu}} (\tau-1)^{-\frac{4\nu}{1+2\nu}}(c^{*})^2n^{\frac{2\nu}{1+2\nu}}M^2\delta^{-\frac{4\nu}{1+2\nu}}\Bigg)^\frac12 \delta \\
\leq& \overline{C}'_{w,\nu,\tau} \max\big(1,n^{-\frac{1}{2+4\nu}} M^{\frac{2\nu}{1+2\nu}}\big)M^\frac{1}{1+2\nu} \delta^{-\frac{1}{1+2\nu}}\sqrt{\ln\big(\delta^{-1}+M\big)}\delta \\
\leq& \overline{C}'_{w,\nu,\tau} \max\big(1,n^{-\frac{1}{2+4\nu}} M^{\frac{2\nu}{1+2\nu}}\big)M^\frac{1}{1+2\nu} \delta^\frac{2\nu}{1+2\nu}\ln^\frac{1}{1+2\nu} (\delta^{-1}+M).
\end{align*}
Hence, the inequality \eqref{eqn:err_E_dp} and the bounds on $\mathfrak{I}$ and $\mathfrak{K}$ imply 
\begin{align*}
\E[\|e_{k(\delta)}^\delta\|^2|Q]^\frac12\leq& \overline{C}_{w,\nu,\tau}^\frac{1}{1+2\nu}(\tau+1)^\frac{2\nu}{1+2\nu} \big(s\max\big(1,\tfrac{\tau-1}{2c_\nu \|w\|}\big)^{3s}\big)^\frac{1}{1+2\nu}
M^\frac{1}{1+2\nu} \delta^\frac{2\nu}{1+2\nu} \ln^\frac{1}{1+2\nu} (\delta^{-1}+M)\\
&+\overline{C}'_{w,\nu,\tau} \max\big(1,n^{-\frac{1}{2+4\nu}} M^{\frac{2\nu}{1+2\nu}}\big)M^\frac{1}{1+2\nu} \delta^\frac{2\nu}{1+2\nu}\ln^\frac{1}{1+2\nu}(\delta^{-1}+M)\\
\leq&  \overline{C}_{w,\nu,\tau}^* \max\big(s,s\big(\tfrac{\tau-1}{2c_\nu \|w\|}\big)^{3s},n^{-\frac{1}{2}}M^{2\nu}\big)^\frac{1}{1+2\nu} M^\frac{1}{1+2\nu} \delta^\frac{2\nu}{1+2\nu}\ln^\frac{1}{1+2\nu}(\delta^{-1}+M).
\end{align*}
This completes the proof of the theorem.
\end{proof}

\subsection{Proof of Theorem \ref{thm:dp}}

Next we show the second main result, i.e., the finite-iteration termination property and convergence rate for SVRG in the uniform sense in Theorem \ref{thm:dp}.

\begin{proof}[Proof of Theorem \ref{thm:dp}]
The proof strategy is similar to that of Theorem \ref{thm:dp_E}. First, 
Theorem \ref{thm:res}(ii) (when $c_0<C_0$) with the convention $k^0:=\ln k$ gives
\begin{align*}
\|r_k^\delta\|\leq \delta+n (c_\nu\|w\|+c^{*})
\left\{\begin{array}{cc}
k^{-(\nu+\frac12)},     & \nu<\frac12 ,\\
k^{-1}\ln k,     & \nu\geq \frac12.
\end{array}\right.
\end{align*}
For any $\epsilon\in(0,\frac12)$, the elementary inequality $k^{-\epsilon}\ln k\leq (e\epsilon)^{-1}$ yields 
\begin{align}\label{eqn:lnk}
k^{-1}\ln k\leq (e\epsilon)^{-1}k^{-(1-\epsilon)}.   
\end{align}
Then, by the discrepancy principle \eqref{eqn:discrepancy_2}, we have (with the shorthand notation $c_{w,\nu,\tau}=\frac{c_\nu\|w\|+c^{*}}{\tau-1}$)
\begin{align*}
k(\delta)\leq \left\{\begin{array}{cc}
M\big\lceil M^{-1}\big((c_{w,\nu,\tau}n\delta^{-1})^{\frac{2}{1+2\nu}}+1\big)\big\rceil,     & \nu<\frac12, \\
M\big\lceil M^{-1}\big((c_{w,\nu,\tau}n(e\epsilon\delta)^{-1})^{\frac{1}{1-\epsilon}}+1\big)\big\rceil,     & \nu\geq \frac12.
\end{array}\right.
\end{align*}
For any $\nu\in(0,\frac12)$, by the identity \eqref{eqn:decom_dp} and the estimates \eqref{eqn:noise0} and \eqref{eqn:z}, we derive  
\begin{align}
\|e_{k(\delta)}^\delta\|\leq &\|z_{k(\delta)}^\delta\|+\bigg\|n^{-1}c_0\sum_{j=0}^{k(\delta)-1}P^j A^* \xi\bigg\|\nonumber\\
\leq& \|B^\frac12 z_{k(\delta)}^\delta\|^\frac{2\nu}{1+2\nu}\|B^{-\nu}z_{k(\delta)}^\delta\|^\frac{1}{1+2\nu}+\sqrt{2n^{-1}c_0}\sqrt{k(\delta)} \delta\nonumber\\
\leq& (n^{-\frac12}(\tau+1)\delta)^\frac{2\nu}{1+2\nu}\|B^{-\nu}z_{k(\delta)}^\delta\|^\frac{1}{1+2\nu}+\sqrt{2n^{-1}c_0}\sqrt{k(\delta)} \delta.
\label{eqn:err_as_dp}
\end{align}
Next, under Assumption \ref{ass}(ii), by Lemma \ref{lem:N}, we have
\begin{align*}
\|B^{-\nu}z_{k(\delta)}^\delta\|\leq& \|B^{-\nu}P^{k(\delta)}e_0^\delta\|+c_0\sum_{j=1}^{k(\delta)-1}\|B^{-\nu} P^{k(\delta)-1-j} N_j\Delta_j^\delta\|\\
\leq &\|P^{k(\delta)}w\|+\sqrt{n}c_0\|B^{\frac12-\nu}\|\|A\Delta_{k(\delta)-1}^\delta\|+\sqrt{n}c_0\sum_{j=1}^{k(\delta)-2}\|P^{k(\delta)-1-j} B^{\frac12-\nu}\|\|A\Delta_j^\delta\|.
\end{align*}
Further, by Lemmas \ref{lem:kernel} and \ref{lem:Delta}, the estimate \eqref{eqn:sum_nu_1} in Lemma \ref{lem:sums} with $k=k(\delta)-1$ and the assumption $\delta\leq 1$, there hold
\begin{align*}
\|B^{-\nu}z_{k(\delta)}^\delta\|
\leq& \|w\|+\sqrt{n}c_0 \|B\|^{\frac12-\nu}(c_1+c_2\delta)M(k(\delta)-1+M)^{-1}\\
&+(\tfrac12-\nu)^{\frac12-\nu}\sqrt{n}c_0^{\frac12+\nu}(c_1+c_2\delta)M\sum_{j=1}^{k(\delta)-2}(k(\delta)-1-j)^{-(\frac12-\nu)} (j+M)^{-1}\\
\leq&\sqrt{n}M C_{w,\nu}<\infty,
\end{align*}
where the constant $C_{w,\nu}$ is independent of $n$, $M$, $k(\delta)$ and  $\delta$.   
Thus, from the estimate \eqref{eqn:err_as_dp} and the assumption $\delta\leq 1$, we derive 
\begin{align*}
\|e_{k(\delta)}^\delta\|
\leq &\big(n^{-\frac12}(\tau+1)\delta\big)^\frac{2\nu}{1+2\nu}(n^{\frac12}MC_{w,\nu})^\frac{1}{1+2\nu}+\sqrt{2n^{-1}c_0\big(M\big\lceil M^{-1}\big((c_{w,\nu,\tau}n \delta^{-1})^{\frac{2}{1+2\nu}}+1\big)\big\rceil\big)} \delta\\
\leq& C_{w,\nu}^\frac{1}{1+2\nu}(\tau+1)^\frac{2\nu}{1+2\nu}n^\frac{1-2\nu}{2+4\nu}M^\frac{1}{1+2\nu} \delta^\frac{2\nu}{1+2\nu}+\sqrt{2c_0\big(c_{w,\nu,\tau}^{\frac{2}{1+2\nu}}n^{\frac{1-2\nu}{1+2\nu}}+Mn^{-1}\big)}  \delta^{\frac{2\nu}{1+2\nu}}\\
\leq& C_{w,\nu}^\frac{1}{1+2\nu}(\tau+1)^\frac{2\nu}{1+2\nu}n^\frac{1-2\nu}{2+4\nu}M^\frac{1}{1+2\nu} \delta^\frac{2\nu}{1+2\nu}+2\sqrt{c_0}\max\big(c_{w,\nu,\tau}^{\frac{1}{1+2\nu}}n^{\frac{1-2\nu}{2+4\nu}}, n^{-\frac12}M^{\frac12}\big)  \delta^{\frac{2\nu}{1+2\nu}}\\
\leq& C^*_{w,\nu,\tau}
\max\Big(n^\frac{1-2\nu}{2+4\nu}M^\frac{1}{1+2\nu},n^{-\frac12}M^\frac12\Big) 
\delta^{\frac{2\nu}{1+2\nu}}
\leq C^*_{w,\nu,\tau}
n^\frac{1-2\nu}{2+4\nu} M^\frac{1}{1+2\nu} 
\delta^{\frac{2\nu}{1+2\nu}}.
\end{align*}
This completes the proof of the theorem.
\end{proof}

\subsection{Proof of Theorem \ref{thm:regularizing}}
To establish the regularizing property of SVRG equipped with the discrepancy principle \eqref{eqn:discrepancy_2}, we need one preliminary estimate. 
The next corollary is direct from Theorem \ref{thm:res}(ii) under Assumption \ref{ass}(i). The definition of $x^\dag$ implies $x^\dag-x_0\in \overline{\mathrm{Range}\big(A^*\big)}$ and 
the polar decomposition $A=Q(A^* A)^\frac12$ with a partial isometry $Q$ (i.e. $Q^* Q$ and $Q Q^*$ are both projections) further implies $x^\dag-x_0\in \overline{\mathrm{Range}\big((A^* A)^\frac12\big)}$.
Thus, for any $\epsilon_0\in(0,1)$, there exists some  $\tilde{x}_0$ such that $\|x_0-\tilde{x}_0\|<\epsilon_0$ and Assumption \ref{ass}(ii) holds with $\nu=\frac12$ and some $w=w_{\epsilon_0}$.
\begin{corollary}\label{cor:res}
Let Assumption \ref{ass}{\rm(i)} hold. Then for any $k\geq 1$ and $\epsilon_0\in(0,1)$, there hold
\begin{align*}
\|P^{k}e_0^\delta\|
&\leq \epsilon_0+(2c_0)^{-\frac12} k^{-\frac12}\|w_{\epsilon_0}\|,\\
\|\E[r_{k}^\delta]\|&\leq \sqrt{n}(2c_0)^{-\frac12}k^{-\frac12}\epsilon_0+
\sqrt{n}c_0^{-1}k^{-1}\|w_{\epsilon_0}\|+\delta,\\
\|r_k^\delta\|&\leq \sqrt{n}(2c_0)^{-\frac12}k^{-\frac12}\epsilon_0+n (c_0^{-1}\|w_{\epsilon_0}\|+{M c^{*}})k^{-1}\ln k+\delta, \quad c_0<C_0,
\end{align*}
where $w_{\epsilon_0}$ depends only on $\epsilon_0$, and $c^{*}>0$ is independent of $\epsilon_0$, $k$, $n$, {$M$} and $\delta$.
\end{corollary}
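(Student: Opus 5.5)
The plan is to split the initial error $e_0^\delta=x_0-x^\dag$ using the auxiliary point $\tilde x_0$ fixed just before the statement. Write $e_0^\delta=(x_0-\tilde x_0)+(\tilde x_0-x^\dag)$. The first piece has norm below $\epsilon_0$. For the second, Assumption \ref{ass}(ii) holds with $\nu=\frac12$ and $w=w_{\epsilon_0}$, that is, $x^\dag-\tilde x_0=B^{\frac12}w_{\epsilon_0}$.

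For the first bound, apply the triangle inequality. Since $c_0\le L^{-1}\le\|B\|^{-1}$, we have $\|P^k\|\le1$, which controls the first piece by $\epsilon_0$. For the second piece, Lemma \ref{lem:kernel} gives
\begin{equation*}
\|P^kB^{\frac12}\|\le (\tfrac12)^{\frac12}c_0^{-\frac12}k^{-\frac12}=(2c_0)^{-\frac12}k^{-\frac12}.
\end{equation*}
This is exactly the constant $c_\nu$ with $\nu=0$. Together these give $\|P^ke_0^\delta\|\le\epsilon_0+(2c_0)^{-\frac12}k^{-\frac12}\|w_{\epsilon_0}\|$.

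For the expected residual, start from \eqref{eqn:bias-var_res} and \eqref{eqn:1delta}, which give $\|\E[r_k^\delta]\|\le\|AP^ke_0^\delta\|+\delta$. Using $\|Av\|=\sqrt n\|B^{\frac12}v\|$, split again:
\begin{equation*}
\|AP^ke_0^\delta\|\le\sqrt n\|P^kB^{\frac12}\|\,\|x_0-\tilde x_0\|+\sqrt n\|P^kB\|\,\|w_{\epsilon_0}\|.
\end{equation*}
Lemma \ref{lem:kernel} gives $\|P^kB\|\le c_0^{-1}k^{-1}$, which is $c_\nu$ with $\nu=\frac12$. This yields the second bound.

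For the pathwise residual, use the first line of \eqref{eqn:res0_as}, namely $\|r_k^\delta\|\le\|AP^ke_0^\delta\|+\delta+\|r_k^\delta-\E[r_k^\delta]\|$. The stochastic part is bounded by Theorem \ref{thm:res}(i) with $c_0<C_0$, giving $nMc^*k^{-1}\ln k$; that estimate uses only Assumption \ref{ass}(i), so the constant $c^*$ does not depend on $\epsilon_0$. The deterministic part is bounded as in the previous step. To merge everything into the stated form, two absorptions are needed:
\begin{itemize}
\item since $\sqrt n\le n$, the term $\sqrt n c_0^{-1}k^{-1}\|w_{\epsilon_0}\|$ is at most $nc_0^{-1}\|w_{\epsilon_0}\|k^{-1}\ln k$ whenever $\ln k\ge1$;
\item for small $k$ (where $\ln k<1$, in particular $k=1$, where $r_1^\delta=\E[r_1^\delta]$), the factor is fixed by adjusting $c^*$, or by noting that $k^{-1}\le 1$ lets the term be absorbed into the $c^*$-term via a slightly enlarged $c^*$.
\end{itemize}

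The main subtlety is justifying that $w_{\epsilon_0}$ exists. This follows from the density of $\mathrm{Range}(B^{\frac12})$ in $\overline{\mathrm{Range}(B^{\frac12})}$, already noted before the statement. The other point needing care is the small-$k$ bookkeeping for the $\ln k$ factor, which is where I would adjust constants. The rest is routine application of Lemma \ref{lem:kernel}.
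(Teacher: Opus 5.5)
Your argument follows the paper's: split $e_0^\delta$ through $\tilde x_0$, use $\|P^k\|\le 1$ and Lemma~\ref{lem:kernel} with exponents $\frac12$ and $1$, combine with \eqref{eqn:bias-var_res} and \eqref{eqn:1delta} for $\E[r_k^\delta]$, and add the stochastic part from Theorem~\ref{thm:res}(i). This fully proves the first two bounds. For $k\ge 3$, your absorption $\sqrt n\,k^{-1}\le n\,k^{-1}\ln k$ holds because $\ln k\ge 1\ge n^{-\frac12}$, so it gives the third bound with the same $c^*$.

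The small-$k$ step you flagged cannot be done as you propose.
\begin{itemize}
\item At $k=1$ the factor $k^{-1}\ln k$ vanishes. The right-hand side becomes $\sqrt n(2c_0)^{-\frac12}\epsilon_0+\delta$, and no enlargement of $c^*$ can absorb the positive term $\sqrt n\,c_0^{-1}\|w_{\epsilon_0}\|$.
\item In fact the third bound is false at $k=1$. The residual $r_1^\delta=APe_0^\delta-(I-c_0n^{-1}AA^*)\xi$ is deterministic and independent of $\epsilon_0$. Letting $\epsilon_0\to0$ would force $\|APe_0^\delta\|\le 2\delta$, which fails for small $\delta$ whenever $Ax_0\neq y^\dag$.
\item At $k=2$ with $n\le 2$ (where $\ln 2<n^{-\frac12}$), enlarging $c^*$ would make it depend on $\|w_{\epsilon_0}\|$. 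That contradicts its required independence of $\epsilon_0$. There you would instead need the sharper spectral bound $\sup_{0\le\lambda\le c_0^{-1}}\lambda(1-c_0\lambda)^2=\frac{4}{27}c_0^{-1}$ in place of Lemma~\ref{lem:kernel}.
\end{itemize}

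The paper's proof performs the same absorption without comment, so this is a defect of the statement rather than of your route. The correct formulation restricts the third bound to $k\ge 2$ (using the sharper bound) or simply $k\ge 3$. This is harmless for its asymptotic use in Theorem~\ref{thm:regularizing}.
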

\begin{proof} 
Let $\tilde{x}^\delta_k$ be the SVRG iterate starting with $\tilde{x}_0$ and  $\tilde{e}^\delta_k=\tilde{x}^\delta_k-x^\dag$. 
Then, by Lemma \ref{lem:kernel}, there hold
\begin{align*}
\|P^{k}e_0^\delta\|
\leq \|P^{k}(\tilde{e}_0^\delta-e_0^\delta)\|+\|P^{k}\tilde{e}_0^\delta\|
\leq \epsilon_0+\|P^{k}B^\frac12 w_{\epsilon_0}\|
\leq \epsilon_0+(2c_0)^{-\frac12}k^{-\frac12}\|w_{\epsilon_0}\|.
\end{align*}
Similarly, we can bound the term $\|AP^{k}e_{0}^\delta\|$ in \eqref{eqn:res0_E} and \eqref{eqn:res0_as} by
\begin{align*}
\|AP^{k}e_{0}^\delta\|\leq& \|AP^{k}(\tilde{e}_{0}^\delta-e_{0}^\delta)\|+\|AP^{k}\tilde{e}_{0}^\delta\|
\leq \sqrt{n}\|B^{\frac12}P^k\|\epsilon_0+\sqrt{n}\|P^{k}B w_{\epsilon_0}\|\\
\leq& \sqrt{n}(2c_0)^{-\frac12}k^{-\frac12}\epsilon_0+
\sqrt{n}c_0^{-1}k^{-1}\|w_{\epsilon_0}\|.
\end{align*}
This and \eqref{eqn:res0_E} yield the desired bound on $\|\E[r_{k}^\delta]\|$. Further, the decomposition $\|r_{k}^\delta\|\leq \|\E[r_{k}^\delta]\|+\|r_{k}^\delta-\E[r_{k}^\delta]\|$ and Theorem \ref{thm:res} (i) give the bound on $\|r_{k}^\delta\|$.
\end{proof}

Now we prove Theorem \ref{thm:regularizing} on the regularizing property of SVRG equipped with the discrepancy principle \eqref{eqn:discrepancy_2}.
The proof proceeds as follows.  (i) we first establish the finite-iteration termination property in the uniform sense using the estimate on the residual $\|r_k^\delta\|$. For the regularizing property, we consider two regimes separately, i.e., $\lim_{\delta\to 0^+}k(\delta)=\infty$ and $\lim_{\delta\to 0^+}k(\delta)<\infty$. When $\lim_{\delta\to 0^+}k(\delta)=\infty$, we decompose the error $\|e_{k(\delta)}^\delta\|$ into three terms and bound each part in terms of $k(\delta)$, $\delta$ and $\epsilon_0$. 
(ii) We then prove similar results with high probability.

\begin{proof}[Proof of Theorem  \ref{thm:regularizing}]
We analyze the uniform and high probability convergence separately.

\medskip
\noindent (i) First, we show the results when $c_0<C_0$. 
For any $\epsilon_0\in(0,1)$ and $\epsilon\in(0,\frac12)$, the inequality \eqref{eqn:lnk} and Corollary \ref{cor:res} yield
\begin{align*}
\|r_k^\delta\|&\leq \sqrt{n}(2c_0)^{-\frac12}k^{-\frac{1-\epsilon}{2}}\epsilon_0+(e\epsilon)^{-1}n (c_0^{-1}\|w_{\epsilon_0}\|+M c^{*})k^{-(1-\epsilon)}+\delta.
\end{align*}
Then by the definition of $k(\delta)$ in the discrepancy principle \eqref{eqn:discrepancy_2}, there holds 
\begin{align}\label{eqn:kt}
k(\delta)\leq M\Big\lceil M^{-1}\lceil t\rceil\Big\rceil
\leq t+M+1,
\end{align}
where $t$ satisfies
\begin{align*}
&(\tau-1) \delta t^{1-\epsilon}-\sqrt{\frac{n}{2c_0}}\epsilon_0\sqrt{t^{1-\epsilon}}-
(e\epsilon)^{-1}n(c_0^{-1}\|w_{\epsilon_0}\|+Mc^{*})= 0.
\end{align*}
Solving the above inequality for $k(\delta)$ yields the finite-iteration termination property:
\begin{align*}
k(\delta)\leq&\Bigg(\frac{\sqrt{\frac{n}{2c_0}}\epsilon_0+\sqrt{\frac{n}{2c_0}\epsilon_0^2+4(e\epsilon)^{-1}n(\tau-1)(c_0^{-1}\|w_{\epsilon_0}\|+Mc^{*}) \delta}}{2(\tau-1) \delta}\Bigg)^\frac{2}{1-\epsilon}+M+1\\
\leq&\Bigg(\frac{n  \epsilon_0^2+2\epsilon^{-1}c_0n(\tau-1)(c_0^{-1}\|w_{\epsilon_0}\|+Mc^{*}) \delta}{2c_0(\tau-1)^2 \delta^2}\Bigg)^\frac{1}{1-\epsilon}+M+1.
\end{align*}
Next, we bound the error $\|e_{k(\delta)}^\delta\|$ at the stopping index $k(\delta)$ for any fixed path. 
Let $\{\delta_t\}_{t=1}^\infty$ be any monotonically decreasing sequence such that $\lim_{t\to \infty}\delta_t=0$. Without loss of generality, we assume that $\lim_{t \to\infty}k(\delta_t)=\infty$ or that there exist some finite $t_*$ and $k_*<\infty$ such that $k(\delta_t)=k_*$ for any $t\geq t_*$.
In the second case, there holds 
$\|r_{k_*}^{\delta_t}\|\leq \tau \delta_t$.
By the triangle inequality, the definition \eqref{eqn:discrepancy_2} of the discrepancy principle and taking the limit as $t\to\infty$, we deduce
\begin{align*}
\|A\lim_{t\to \infty}e_{k_*}^{\delta_t}\|=\lim_{t\to \infty}\|r_{k_*}^{\delta_t}\|
+\delta_t\leq (\tau+1) \lim_{t\to \infty}\delta_t=0.
\end{align*}
Thus, $e_{k_*}^{\delta_t}-e_{0}^{\delta_t}\in \mathrm{Range}\big(A^*\big)$ and $e_{0}^{\delta_t}\in \overline{\mathrm{Range}\big(A^*\big)}$ imply
\begin{align}\label{eqn:err_finite}
\lim_{t\to \infty}\|e_{k_*}^{\delta_t}\|=0,\quad \mbox{i.e.,}\quad \lim_{\delta\to 0^+}\|e_{k(\delta)}^{\delta}\|=0.
\end{align}
In the first case, i.e., when $\lim_{t \to\infty}k(\delta_t)=\infty$, by the decomposition in Lemma \ref{lem:bias-var} and the triangle inequality, we obtain
\begin{align}\label{eqn:cor_decom}
\|e_{k(\delta)}^\delta\|
\leq& \|P^{k(\delta)}e_0^\delta\|+\bigg\|n^{-1}c_0\sum_{j=0}^{k(\delta)-1}P^j A^* \xi\bigg\|+\|q_{k(\delta)}^\delta\|.
\end{align}
By Corollary \ref{cor:res}, the discrepancy principle \eqref{eqn:discrepancy_2} and the estimate \eqref{eqn:kt}, for any $\epsilon_0\in(0,1)$, the stopping index $k(\delta)\leq t+M+1$ with $t$ satisfying 
\begin{align*}
\sqrt{n}(2c_0)^{-\frac12}t^{-\frac12}\epsilon_0+n {(c_0^{-1}\|w_{\epsilon_0}\|+Mc^{*})}t^{-1}\ln t+\delta=\tau \delta,
\end{align*}
which implies
\begin{align*}
(\tau-1) \sqrt{t} \delta=\sqrt{n}(2c_0)^{-\frac12}\epsilon_0+n (c_0^{-1}\|w_{\epsilon_0}\|+Mc^{*})t^{-\frac12}\ln t.
\end{align*}
This and the inequality \eqref{eqn:noise0} yield
\begin{align*}
&\quad{\bigg\|n^{-1}c_0\sum_{j=0}^{k(\delta)-1}P^j A^* \xi\bigg\|
\leq \sqrt{\frac{2c_0}{n}}\sqrt{k(\delta)} \delta
\leq \sqrt{\frac{2c_0}{n}}\sqrt{t+M+1} \delta}
\leq \sqrt{\frac{2c_0}{n}}\sqrt{t} \delta+\sqrt{\frac{2c_0(M+1)}{n}}\delta\\
&\leq \sqrt{\frac{2c_0}{n}}(\tau-1)^{-1}
\Big(\sqrt{n}(2c_0)^{-\frac12}\epsilon_0+n (c_0^{-1}\|w_{\epsilon_0}\|+Mc^{*})t^{-\frac12}\ln t\Big)+\sqrt{\frac{2c_0(M+1)}{n}}\delta\\
&=  (\tau-1)^{-1}
\Big(\epsilon_0+\sqrt{2c_0 n}(c_0^{-1}\|w_{\epsilon_0}\|+Mc^{*})t^{-\frac12}\ln t\Big)+\sqrt{2n^{-1}c_0(M+1)}\delta \\
&{\xrightarrow{\;\;\delta\to 0^+,\;\;t\geq k(\delta)-M-1 \to \infty\;\;} (\tau-1)^{-1}
\epsilon_0.}
\end{align*}
Then, for any $k(\delta)\geq 3$, Lemma \ref{lem:q_k_E} implies $\|q_{k(\delta)}^\delta\|\xrightarrow{\;\;k(\delta) \to \infty\;\;} 0$.
Thus, when $\lim_{t \to\infty}k(\delta_t)=\infty$, by the decomposition \eqref{eqn:cor_decom} and Corollary \ref{cor:res}, there holds
\begin{align*}
\lim_{t\to\infty}\|e_{k(\delta_t)}^{\delta_t}\|
\leq& \lim_{t\to\infty}\Big(\epsilon_0+\big(2c_0 k(\delta_t)\big)^{-\frac12}\|w_{\epsilon_0}\|+ (\tau-1)^{-1}
\epsilon_0\Big)=\tau(\tau-1)^{-1}\epsilon_0,
\end{align*}
i.e., 
$$\lim_{\delta\to 0^+}\|e_{k(\delta)}^\delta\|\leq \tau(\tau-1)^{-1}\epsilon_0.$$ 
This completes the proof of part (i) of the theorem.

\bigskip
\noindent (ii) The proof of this part is lengthy, and is divided into three steps. \\
\underline{Step 1. Finite iteration termination property.} When $c_0<\overline{C_0}$, following the argument of Theorem \ref{thm:dp_E}, we define 
\begin{align*}
k_0(\delta)=\min\{k\in \mathbb{N}:\;\|\E[r_{k}^\delta]\|\leq \tfrac13(2\tau+1) \delta\}.
\end{align*}
For any $\epsilon_0\in(0,1)$, Corollary \ref{cor:res} implies $k_0(\delta)\leq \lceil t\rceil$ with $t$ satisfying
\begin{align*}
&\tfrac23(\tau-1) \delta t-\sqrt{\frac{n}{2c_0}}\epsilon_0\sqrt{t}-
\frac{\sqrt{n}}{c_0}\|w_{\epsilon_0}\|= 0.
\end{align*}
Solving the equation for $t$ yields
\begin{align*}
k_0(\delta)\leq&\Bigg\lceil\Bigg(\frac{\sqrt{\frac{n}{2c_0}}\epsilon_0+\sqrt{\frac{n}{2c_0}\epsilon_0^2+\tfrac83(\tau-1) \delta \frac{\sqrt{n}}{c_0}\|w_{\epsilon_0}\|}}{\tfrac43(\tau-1) \delta}\Bigg)^2\Bigg\rceil
\leq\Bigg\lceil \frac{\tfrac{9}{8}n  \epsilon_0^2+3\sqrt{n}\|w_{\epsilon_0}\|(\tau-1) \delta }{c_0(\tau-1)^2 \delta^2}\Bigg\rceil.
\end{align*}
Then, when $\delta\leq1$, upon letting \begin{align}\label{eqn:bar_k_re}
\overline{k}(\delta):={M\Bigg\lceil M^{-1}\bigg\lceil \frac{\tfrac{9}{8}n  \epsilon_0^2+3\sqrt{n}\|w_{\epsilon_0}\|(\tau-1) \delta \ln(\delta^{-1}+2)}{c_0(\tau-1)^2 \delta^2}\bigg\rceil\Bigg\rceil}
\geq k_0(\delta),
\end{align}
Lemmas \ref{lem:mono_res} and \ref{lem:Chebyshev} with ($\tau'=\tfrac13(2\tau+1)$) yield
\begin{align*}
&\lim_{\delta\to 0^+}\Prob\Big(k(\delta)\leq \overline{k}(\delta)\Big) \geq 1-\lim_{\delta\to 0^+}(\tfrac{\tau-1}3)^{-2}n (c^{*})^2\delta^{-2}\big(M^{-1}\overline{k}(\delta)\big)^{-2}\\
\geq & 1-\lim_{\delta\to 0^+}9 n (c^{*})^2 \bigg(\frac{\tfrac{9}{8}n  \epsilon_0^2+3\sqrt{n}\|w_{\epsilon_0}\|(\tau-1) \delta \ln(\delta^{-1}+2)}{c_0(\tau-1)M \delta}\bigg)^{-2}
=1,
\end{align*}
i.e.,
$$\lim_{\delta\to 0^+}{\Prob\big(k(\delta)\leq \overline{k}(\delta)\big)}=1.$$

\medskip
\noindent\underline{Step 2. Basic error decomposition.} Now, we establish the regularizing property of SVRG in high probability. 
First we define the following two events 
\begin{align*} 
\widetilde{Q} &=\{\theta:\lim_{\delta\to 0^+}k(\delta,\theta)=\infty\},\\
\widehat{Q} & =  \Big\{\theta \in \tilde{Q}^c: \exists \;C_{\max}>0, \delta_0\in(0,1), \mbox{ s.t.}\sup_{\delta\in(0,\delta_0]}\|e^\delta_{k(\delta,\theta)}\|<C_{\max} \Big\},
\end{align*}
with the random realization $\theta$ and moreover, let 
\begin{align}\label{eqn:Q_p}
Q_{p_0}=\widetilde{Q}\cup\widehat{Q},    
\end{align}
for any $p_0\in(0,\frac12)$. Proposition \ref{prop:Q_p} gives $\Prob(Q_{p_0})\geq 1-p_0\geq \frac12$.
For the conditional mean squared error 
\begin{align*}
\E[\|e_{k(\delta)}^\delta\|^2| Q_{p_0}]^\frac12 \leq \Prob(Q_{p_0})^{-\frac12}\Big(\E\big[\|e_{k(\delta)}^\delta\|^2|\widetilde{Q}\big]\Prob(\widetilde{Q})+\E\big[\|e_{k(\delta)}^\delta\|^2|\widehat{Q}\big]\Big)^\frac12
\end{align*} 
at the stopping index $k(\delta)$, which is measurable with respect to the filtration $\mathcal{F}$, it follows from \eqref{eqn:err_finite} that
\begin{align*}
\lim_{\delta\to 0^+}\|e_{k(\delta)}^{\delta}\|&=0, \quad \mbox{when}\quad \lim_{\delta\to 0^+}k(\delta)<\infty,
\end{align*}
which implies $$\lim_{\delta\to 0^+}\E\big[\|e_{k(\delta)}^\delta\|^2|\widehat{Q}\big]=0.$$
Thus, 
\begin{align}
&\lim_{\delta\to 0^+}\E[\|e_{k(\delta)}^\delta\|^2| Q_{p_0}]^\frac12\leq 
\sqrt{2}\lim_{\delta\to 0^+}\E\big[\|e_{k(\delta)}^\delta\|^2|\widetilde{Q}\big]^\frac12\Prob(\widetilde{Q})^\frac12\nonumber\\
\leq & \sqrt{2}\Big(\epsilon_0+\sqrt{\lim_{\delta\to 0^+}2c_0 n^{-1}\E\big[k(\delta)|\widetilde{Q}\big] \Prob(\widetilde{Q})\delta^2}+\sqrt{\lim_{\delta\to 0^+}\E\big[\|q_{k(\delta)}^\delta\|^2|\widetilde{Q}\big]\Prob(\widetilde{Q})}\Big),\label{eqn:cor_e_E}
\end{align}
Next we bound the two limits in the bracket separately. By the estimate \eqref{eqn:Ek} with $\tau'=\tfrac13(2\tau+1)$, we derive
\begin{align*}
\E[k(\delta)]
\leq 2\overline{k}(\delta)+c'_{\tau,n,M}\delta^{-2}\bar{k}(\delta)^{-1},
\end{align*}
with $c'_{\tau,n,M}=9(\tau-1)^{-2}n M^2(c^{*})^2$. 
Then direct computation gives
\begin{align*}
&\lim_{\delta\to 0^+}2c_0 n^{-1}\E\big[k(\delta)|\widetilde{Q}\big] \Prob(\widetilde{Q})\delta^2
\leq 2c_0 n^{-1} \lim_{\delta\to 0^+}\E\big[k(\delta)\big]\delta^2\\
\leq& 2c_0 n^{-1} \lim_{\delta\to 0^+}\big(2\overline{k}(\delta)+c'_{\tau,n,M}\delta^{-2}\overline{k}(\delta)^{-1}\big)\delta^2\\
\leq& 2c_0 n^{-1} \big(2\lim_{\delta\to 0^+}\overline{k}(\delta)\delta^2+c'_{\tau,n,M}\lim_{\delta\to 0^+}\overline{k}(\delta)^{-1}\big).
\end{align*}
Meanwhile by the definition \eqref{eqn:bar_k_re} and the following inequality (derived from \eqref{eqn:ln})
$$\delta^{\frac12}\ln(\delta^{-1}+2)\leq \delta^{\frac12}\ln(3\delta^{-1})\leq \delta^{\frac12}\big(\ln(\delta^{-1})+\ln3\big)\leq 2e^{-1}+\ln 3<2,$$ we can bound $\overline{k}(\delta)$ by
\begin{align*}
\overline{k}(\delta)\leq& \frac{\tfrac{9}{8}n  \epsilon_0^2+3\sqrt{n}\|w_{\epsilon_0}\|(\tau-1) \delta \ln(\delta^{-1}+2)}{c_0(\tau-1)^2 \delta^2}+M+1\\
\leq& \frac{\tfrac{9}{8}n  \epsilon_0^2+6\sqrt{n}\|w_{\epsilon_0}\|(\tau-1) \delta^\frac12 }{c_0(\tau-1)^2 \delta^2}+M+1,\\
\overline{k}(\delta)\geq& \frac{\tfrac{9}{8}n  \epsilon_0^2+3\sqrt{n}\|w_{\epsilon_0}\|(\tau-1) \delta \ln(\delta^{-1}+2)}{c_0(\tau-1)^2 \delta^2}.
\end{align*}
Consequently,
\begin{align*}
\lim_{\delta\to 0^+}\overline{k}(\delta)\delta^2\leq \tfrac{9}{8}c_0^{-1} n (\tau-1)^{-2} \epsilon_0^2 \quad \mbox{and}\quad 
\lim_{\delta\to 0^+}\overline{k}(\delta)^{-1}=0,
\end{align*}
which imply
\begin{align*}
\lim_{\delta\to 0^+}2c_0 n^{-1}\E\big[k(\delta)|\widetilde{Q}\big] \Prob(\widetilde{Q})\delta^2
\leq &2c_0 n^{-1} \big(\tfrac{9}{4}c_0^{-1} n (\tau-1)^{-2} \epsilon_0^2\big)=\tfrac{9}{2}(\tau-1)^{-2}  \epsilon_0^2.
\end{align*}
\underline{Step 3. Vanishing limit on the last term.}
Next, we estimate the last term in \eqref{eqn:cor_e_E} in the bracket. 
Let $$k_{\rm L}(\delta):=\inf_{\theta\in \widetilde{Q}}\{ k(\delta,\theta)\},\quad \forall \delta\in(0,1].$$ 
The choice of $k_{\rm L}(\delta)$ implies that $k_{\rm L}(\delta)$ is independent of the path $\theta$ and $\lim_{\delta\to 0^+}k_{\rm L}(\delta)=\infty$. 
By Lemma \ref{lem:q_k}, we obtain 
\begin{align*}
\|q_{k(\delta)}^\delta\|^2&\leq \|q_{k_{\rm L}(\delta)}^\delta\|^2+\frac{n c_0}{2-c_0\|B\|}\sum_{j=k_{\rm L}(\delta)}^{k(\delta)-1}\|A\Delta_j^\delta\|^2\\
&\leq \|q_{k_{\rm L}(\delta)}^\delta\|^2+\frac{n c_0}{2-c_0\|B\|}\sum_{j=k_{\rm L}(\delta)}^{\infty}\|A\Delta_j^\delta\|^2,
\end{align*}
which yields
\begin{align*}
&\lim_{\delta\to 0^+}\E\big[\|q_{k(\delta)}^\delta\|^2|\widetilde{Q}\big]\Prob(\widetilde{Q}) \\
\leq &\lim_{k_{\rm L}(\delta)\to\infty}\E[\|q_{k_{\rm L}(\delta)}^\delta\|^2|\widetilde{Q}]\Prob(\widetilde{Q})+\frac{n c_0}{2-c_0\|B\|}\lim_{k_{\rm L}(\delta)\to\infty}\sum_{j=k_{\rm L}(\delta)}^{\infty}\E[\|A\Delta_j^\delta\|^2|\widetilde{Q}]\Prob(\widetilde{Q})\\
\leq &\lim_{k_{\rm L}(\delta)\to\infty}\E[\|q_{k_{\rm L}(\delta)}^\delta\|^2]+\frac{n c_0}{2-c_0\|B\|}\lim_{k_{\rm L}(\delta)\to\infty}\sum_{j=k_{\rm L}(\delta)}^{\infty}\E[\|A\Delta_j^\delta\|^2].
\end{align*}
Next, by Lemma \ref{lem:q_k_E} with $k=k_{\rm L}(\delta)$,
\begin{align*}
\lim_{k_{\rm L}(\delta)\to\infty}\E[\|q_{k_{\rm L}(\delta)}^\delta\|^2]\leq& 
c_0(c_1+c_2) (c_0\|B\|+2) M^2\lim_{k_{\rm L}(\delta)\to\infty}  \big(k_{\rm L}(\delta)-1\big)^{-1}=0.
\end{align*}
Further, Lemma \ref{lem:Delta} gives 
\begin{align*}
\lim_{k_{\rm L}(\delta)\to\infty}\sum_{j=k_{\rm L}(\delta)}^{\infty}\E[\|A\Delta_j^\delta\|^2] \leq (c_1+c_2)M^2\lim_{k_{\rm L}(\delta)\to\infty}\sum_{j=k_{\rm L}(\delta)}^{\infty} (j+M)^{-2}=0.
\end{align*}
Finally, by combining the preceding estimates, we derive from \eqref{eqn:cor_e_E} that
\begin{align*}
\lim_{\delta\to 0^+}\E[\|e_{k(\delta)}^\delta\|^2]^\frac12
\leq \sqrt{2}\epsilon_0+3(\tau-1)^{-1}  \epsilon_0\leq \big(\sqrt{2}+3(\tau-1)^{-1}\big) \epsilon_0.
\end{align*}
This completes the proof of the theorem.
\end{proof}

\section{Numerical experiments and discussions}\label{sec:num}
In this section, we provide numerical experiments for several linear inverse problems in Hilbert spaces to complement the theoretical findings in Section \ref{sec:conv}. The experimental setting is identical to that in the works \cite{JinZhouZou:2022ip,JinZhou:2026}. We employ three examples,  i.e., \texttt{s-phillips} (mildly ill-posed), \texttt{s-gravity} (severely ill-posed) and \texttt{s-shaw}
(severely ill-posed), which are generated from the code \texttt{phillips}, \texttt{gravity} and \texttt{shaw}, taken from the \texttt{MATLAB}
package Regutools \cite{P.C.Hansen2007} (publicly available at \url{http://people.compute.dtu.dk/pcha/Regutools/}). 
All the examples are discretized into a finite-dimensional linear system with a forward operator $A: \mathbb{R}^m \rightarrow \mathbb{R}^n$ of size $n = m = 1000$, with $A x= (A_1 x,\cdots,A_n x)$ for all $x\in \mathbb{R}^m$ and $A_i: \mathbb{R}^m \rightarrow \mathbb{R}$.
To precisely control the regularity index $\nu$ in the source condition in Assumption
\ref{ass}(ii), we generate the exact solution $x^\dag$ by 
\begin{equation}\label{eqn:num_gene_x}
x^\dag = \|\big(A^*A\big)^\nu x_e\|_{\ell^\infty}^{-1}\big(A^*A\big)^\nu x_e,
\end{equation}
with $x_e$ being the exact solution provided by the package and $\|\cdot\|_{\ell^\infty}$ denoting the maximum norm of a vector. Note that the index $\nu$ in the source condition is slightly larger than the one used in \eqref{eqn:num_gene_x} due to the existing regularity of $x_e$. The exact data $y^\dag$ is given by $y^\dag=A x^\dag$ and the noisy data $y^\delta$ is generated by
\begin{equation*}
y^\delta_i:=y^\dag_i+\epsilon\|y^\dag\|_{\ell^\infty}\xi_i,\quad i=1,\cdots,n,
\end{equation*}
where the noise components $\xi_i$ follow the standard Gaussian distribution, and $\epsilon > 0$ is the relative noise level. 

All the iterative methods are initialized to zero, with a constant step size $c_0$, depending on the constant $c=(\max_i(\|A_i\|^2))^{-1}$ for SVRG and $c_0=\|A\|^{-2}$ for the Landweber method (LM). 
The constant step size $c_0$ is taken for SVRG to achieve optimal convergence while maintaining computational efficiency across all noise levels. In particular, Assumption \ref{ass}(i) requires the step size $c_0\leq L^{-1}=c$, and smaller step sizes are associated with larger $M$, reflecting the dependence of $\overline{C_0}$ on $M$.
The methods are run for a maximum of 1e5 epochs, where one epoch refers to one Landweber iteration or $nM/(n+M)$ SVRG iterations, so that their overall computational complexity is comparable. 
The frequency $M$ of computing the full gradient is chosen from the set $\{0.1n, n, 2n\}$.
The stopping index $k_*=k(\delta)$ (measured in terms of epoch count) is chosen by the discrepancy principles \eqref{eqn:discrepancy_1} and \eqref{eqn:discrepancy_2}
with $\tau=1.01$ for LM and SVRG, respectively.
Note that the benchmark method, LM, can achieve order optimality \cite{HankeNeubauerScherzer:1995}.
The accuracy of the reconstructions is measured by the square root of the relative mean squared errors $e={\E[\| x_{k}^\delta-x^\dag\|^2]^\frac12}/{\|x^\dag\|}$ at the stopping index $k_*$, i.e., $e_*={\E[\| x_{k_*}^\delta-x^\dag\|^2]^\frac12}/{\|x^\dag\|}$, for SVRG, 
and the relative error $e=\|x_{k}^\delta-x^\dag\|/\|x^\dag\|$ at $k_*$, i.e., $e_*=\|x_{k_*}^\delta-x^\dag\|/\|x^\dag\|$, for LM. 
The statistical quantities generated by SVRG are computed based on ten independent runs. 

The numerical results for the three examples with varying regularity indices $\nu$ and noise levels $\epsilon$, are presented in Tables \ref{tab:phil}, \ref{tab:gravity}, and \ref{tab:shaw}.
It is observed  that, when empolying suitable constant step sizes, SVRG with varying frequency $M$ achieves an accuracy (with much fewer iterations for relatively low-regularity cases) comparable with that for the optimal LM across $\nu\in\{0,0.25,0.5,1\}$. 
Typically, problems with a higher noise level or a higher regularity require fewer iterations. These observations agree with the theoretical results of Theorems \ref{thm:dp_E} and  \ref{thm:dp}, where the (nearly) optimal convergence rates are proven and the upper bounds on the stopping index decrease as the noise level or the regularity index increases. 
However, a smaller step size is required for problems with higher regularity in order to overcome the saturation phenomenon of SVRG \cite{JinZhouZou:2022ip}.
The iteration trajectories of the methods for the three examples with $\nu=0$ in Fig. \ref{fig} show the advantage of SVRG over LM, agreeing well with the quantitative results presented in Tables \ref{tab:phil}-\ref{tab:shaw}. 

\begin{table}[htp!]
  \centering
  \setlength{\tabcolsep}{3pt}
  \begin{threeparttable}
  \caption{The comparison between SVRG and LM for \texttt{s-phillips}.}\label{tab:phil}
    \begin{tabular}{ccccccccccccccc}
    \toprule
    \multicolumn{2}{c}{Method}&
    \multicolumn{3}{c}{SVRG ($M=0.1n$)}&
    \multicolumn{3}{c}{SVRG ($M=n$)}&
    \multicolumn{3}{c}{SVRG ($M=2n$)}&\multicolumn{2}{c}{LM}\\
    \cmidrule(lr){3-5} \cmidrule(lr){6-8} \cmidrule(lr){9-11} \cmidrule(lr){12-13}
    $\nu$& $\epsilon$ &$c_0$ &$e_*$&$k_*$&$c_0$ &$e_*$&$k_*$&$c_0$ &$e_*$&$k_*$&$e_*$&$k_*$\\
    \midrule
    $0$& 1e-3 & c/8 & 1.95e-2 & 244.2  & c/30 & 1.97e-2 & 171.8  & c/40 & 1.97e-2 & 172.8  & 1.93e-2 & 758 \cr
       & 5e-3 &     & 2.99e-2 & 29.1   &      & 3.02e-2 & 19.0  &      & 3.07e-2 & 20.6  & 2.81e-2 & 102 \cr
       & 1e-2 &     & 3.86e-2 & 18.7   &      & 4.32e-2 & 15.3   &      & 3.97e-2 & 14.7   & 3.81e-2 & 68  \cr
       & 5e-2 &     & 9.10e-2 & 4.4    &      & 1.05e-1 & 4.0   &      & 1.00e-1 & 4.4   & 9.44e-2 & 12  \cr
    \midrule
    $0.25$& 1e-3 & c/40 & 4.81e-3 & 193.0  & c/200 & 4.80e-3 & 177.7  & c/300 & 4.84e-3 & 198.5  & 4.58e-3 & 135 \cr
          & 5e-3 &      & 1.50e-2 & 86.3  &       & 1.49e-2 & 76.5   &       & 1.50e-2 & 86.8   & 1.48e-2 & 60  \cr
          & 1e-2 &      & 2.81e-2 & 39.0  &       & 2.79e-2 & 34.1  &       & 2.81e-2 & 40.0  & 2.81e-2 & 26  \cr
          & 5e-2 &      & 4.68e-2 & 14.8  &       & 4.95e-2 & 13.3   &       & 4.92e-2 & 15.0   & 4.66e-2 & 10  \cr
    \midrule
    $0.5$& 1e-3 & c/60 & 2.99e-3 & 201.8  & c/300 & 2.98e-3 & 187.8  & c/500 & 3.01e-3 & 230.1 & 2.90e-3 & 94 \cr
         & 5e-3 &      & 1.23e-2 & 51.7  &       & 1.21e-2 & 46.4  &       & 1.21e-2 & 58.1 & 1.21e-2 & 23 \cr
         & 1e-2 &      & 1.45e-2 & 36.8   &       & 1.53e-2 & 33.8  &       & 1.53e-2 & 42.2  & 1.51e-2 & 16 \cr
         & 5e-2 &      & 2.81e-2 & 18.1   &       & 3.05e-2 & 16.4  &       & 3.01e-2 & 20.4  & 2.92e-2 & 8  \cr
    \midrule
    $1$& 1e-3 & c/120 & 2.03e-3 & 115.5  & c/600 & 1.99e-3 & 104.8  & c/800 & 2.05e-3 & 106.7 & 1.92e-3 & 25 \cr
       & 5e-3 &       & 3.98e-3 & 70.4 &       & 3.74e-3 & 64.0  &       & 3.95e-3 & 64.5  & 3.44e-3 & 16 \cr
       & 1e-2 &       & 5.83e-3 & 55.5   &       & 5.89e-3 & 49.8  &       & 5.88e-3 & 49.8  & 5.54e-3 & 12 \cr
       & 5e-2 &       & 1.68e-2 & 26.4  &       & 1.78e-2 & 23.8  &       & 1.76e-2 & 23.7  & 1.82e-2 & 5  \cr
    \bottomrule
    \end{tabular}
    \end{threeparttable}
\end{table}

\begin{table}[htp!]
  \centering
  \setlength{\tabcolsep}{3pt}
  \begin{threeparttable}
  \caption{The comparison between SVRG and LM for \texttt{s-gravity}.}\label{tab:gravity}
    \begin{tabular}{ccccccccccccccc}
    \toprule
    \multicolumn{2}{c}{Method}&
    \multicolumn{3}{c}{SVRG ($M=0.1n$)}&
    \multicolumn{3}{c}{SVRG ($M=n$)}&
    \multicolumn{3}{c}{SVRG ($M=2n$)}&\multicolumn{2}{c}{LM}\\
    \cmidrule(lr){3-5} \cmidrule(lr){6-8} \cmidrule(lr){9-11} \cmidrule(lr){12-13}
    $\nu$& $\epsilon$ &$c_0$ &$e_*$&$k_*$&$c_0$ &$e_*$&$k_*$&$c_0$ &$e_*$&$k_*$&$e_*$&$k_*$\\
    \midrule
    $0$& 1e-3 & c/10 & 2.44e-2 & 331.6  & c/20 & 2.40e-2 & 137.7  & c/40 & 2.44e-2 & 183.9  & 2.36e-2 & 1649 \cr
       & 5e-3 &      & 3.94e-2 & 113.8  &      & 4.62e-2 & 28.0   &      & 4.34e-2 & 26.4   & 4.04e-2 & 255  \cr
       & 1e-2 &      & 5.27e-2 & 46.2   &      & 5.89e-2 & 16.3   &      & 5.89e-2 & 16.6   & 5.30e-2 & 113  \cr
       & 5e-2 &      & 1.01e-1 & 9.3    &      & 9.97e-2 & 4.1    &      & 1.10e-1 & 4.6    & 9.90e-2 & 22   \cr  
    \midrule
     $0.25$& 1e-3 & c/40 & 6.61e-3 & 275.5  & c/180 & 6.68e-3 & 217.9  & c/200 & 6.81e-3 & 186.0  & 6.50e-3 & 319 \cr
           & 5e-3 &      & 1.72e-2 & 66.5   &       & 1.66e-2 & 46.9   &       & 1.69e-2 & 43.8   & 1.64e-2 & 71  \cr
           & 1e-2 &      & 2.42e-2 & 36.3   &       & 2.38e-2 & 28.6   &       & 2.48e-2 & 24.8   & 2.32e-2 & 43  \cr
           & 5e-2 &      & 5.41e-2 & 9.9    &       & 5.16e-2 & 8.6    &       & 5.42e-2 & 7.0    & 5.35e-2 & 12  \cr
    \midrule
    $0.5$& 1e-3 & c/80 & 3.56e-3 & 212.8  & c/400 & 3.49e-3 & 170.7  & c/500 & 3.64e-3 & 173.7  & 3.39e-3 & 112 \cr
         & 5e-3 &      & 9.85e-3 & 67.1   &       & 9.29e-3 & 60.4   &       & 9.35e-3 & 57.3   & 9.10e-3 & 40  \cr
         & 1e-2 &      & 1.45e-2 & 42.3   &       & 1.35e-2 & 37.0   &       & 1.44e-2 & 34.5   & 1.41e-2 & 25  \cr
         & 5e-2 &      & 3.36e-2 & 14.8   &       & 3.41e-2 & 13.2   &       & 3.33e-2 & 12.2   & 3.40e-2 & 8   \cr
    \midrule
    $1$& 1e-3 & c/160 & 1.60e-3 & 145.7  & c/800 & 1.52e-3 & 128.7  & c/1000 & 1.89e-3 & 125.2  & 1.46e-3 & 42 \cr
       & 5e-3 &       & 4.55e-3 & 59.4   &       & 4.39e-3 & 54.6   &        & 4.54e-3 & 49.8   & 4.11e-3 & 18 \cr
       & 1e-2 &       & 6.74e-3 & 42.3   &       & 6.29e-3 & 37.9   &        & 6.71e-3 & 35.6   & 6.58e-3 & 12 \cr
       & 5e-2 &       & 1.76e-2 & 20.9   &       & 1.78e-2 & 19.1   &        & 1.77e-2 & 17.8   & 1.48e-2 & 6  \cr
 \bottomrule
    \end{tabular}
    \end{threeparttable}
\end{table}

\begin{table}[htp!]
  \centering
  \setlength{\tabcolsep}{3pt}
  \begin{threeparttable}
  \caption{The comparison between SVRG and LM for \texttt{s-shaw}.}\label{tab:shaw}
    \begin{tabular}{ccccccccccccccc}
    \toprule
    \multicolumn{2}{c}{Method}&
    \multicolumn{3}{c}{SVRG ($M=0.1n$)}&
    \multicolumn{3}{c}{SVRG ($M=n$)}&
    \multicolumn{3}{c}{SVRG ($M=2n$)}&\multicolumn{2}{c}{LM}\\
    \cmidrule(lr){3-5} \cmidrule(lr){6-8} \cmidrule(lr){9-11} \cmidrule(lr){12-13}
    $\nu$& $\epsilon$ &$c_0$ &$e_*$&$k_*$&$c_0$ &$e_*$&$k_*$&$c_0$ &$e_*$&$k_*$&$e_*$&$k_*$\\
    \midrule
    $0$& 1e-3 & c & 4.95e-2 & 900.3  & c & 5.01e-2 & 176.5  & c & 5.16e-2 & 119.2  & 4.93e-2 & 22314 \cr
       & 5e-3 &   & 9.19e-2 & 187.0  &   & 9.28e-2 & 51.1   &   & 9.21e-2 & 57.0   & 9.28e-2 & 4858  \cr
       & 1e-2 &   & 1.53e-1 & 37.4   &   & 1.45e-1 & 22.0   &   & 9.53e-2 & 32.1   & 1.53e-1 & 642   \cr
       & 5e-2 &   & 1.76e-1 & 7.1    &   & 1.90e-1 & 12.3  &   & 1.51e-1 & 21.3   & 1.78e-1 & 68    \cr
    \midrule
    $0.25$& 1e-3 & c/6 & 1.65e-2 & 304.1  & c/10 & 1.64e-2 & 124.0  & c/20 & 1.66e-2 & 196.5  & 1.69e-2 & 1218 \cr
          & 5e-3 &     & 2.26e-2 & 36.3   &      & 2.42e-2 & 11.3  &      & 2.37e-2 & 18.1   & 2.24e-2 & 139  \cr
          & 1e-2 &     & 2.44e-2 & 27.5   &      & 2.70e-2 & 9.3   &      & 2.74e-2 & 13.6  & 2.59e-2 & 99   \cr
          & 5e-2 &     & 6.55e-2 & 6.6    &      & 5.36e-2 & 4.1    &      & 6.03e-2 & 5.8    & 7.02e-2 & 24   \cr
    \midrule
    $0.5$& 1e-3 & c/20 & 3.35e-3 & 136.9  & c/100 & 3.25e-3 & 124.8  & c/150 & 3.46e-3 & 139.5  & 3.16e-3 & 169 \cr
         & 5e-3 &      & 8.94e-3 & 66.0   &       & 8.87e-3 & 57.2   &       & 9.04e-3 & 65.1   & 8.83e-3 & 78  \cr
         & 1e-2 &      & 1.69e-2 & 35.7   &       & 1.69e-2 & 28.7   &       & 1.72e-2 & 35.4   & 1.69e-2 & 42  \cr
         & 5e-2 &      & 5.64e-2 & 13.7   &       & 5.63e-2 & 11.5  &       & 5.73e-2 & 13.4  & 5.36e-2 & 16  \cr
    \midrule                               
    $1$& 1e-3 & c/60 & 1.84e-3 & 118.2  & c/200 & 1.91e-3 & 94.7  & c/400 & 1.94e-3 & 136.8  & 1.80e-3 & 54 \cr
       & 5e-3 &      & 6.15e-3 & 63.8   &       & 6.26e-3 & 37.6 &       & 6.42e-3 & 57.9  & 6.13e-3 & 25 \cr
       & 1e-2 &      & 1.18e-2 & 48.4  &       & 1.19e-2 & 29.0  &       & 1.20e-2 & 43.5  & 1.18e-2 & 19 \cr
       & 5e-2 &      & 5.46e-2 & 14.8  &       & 5.40e-2 & 8.7  &       & 5.55e-2 & 12.9   & 5.26e-2 &  6 \cr
 \bottomrule
    \end{tabular}
    \end{threeparttable}
\end{table}

\begin{figure}[hbt!]
\centering
  \setlength{\tabcolsep}{4pt}
\begin{tabular}{ccc}
\includegraphics[width=0.31\textwidth,trim={1.5cm 0 0.5cm 0.5cm}]{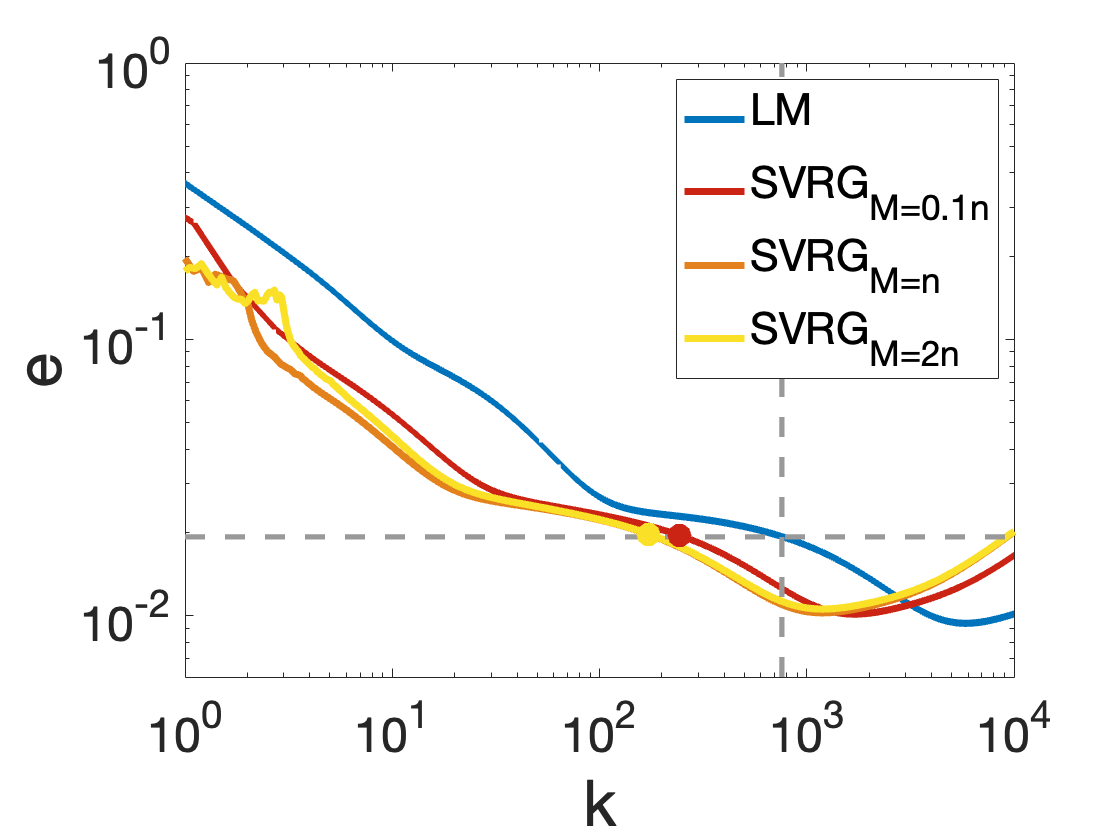}&
\includegraphics[width=0.31\textwidth,trim={1.5cm 0 0.5cm 0.5cm}]{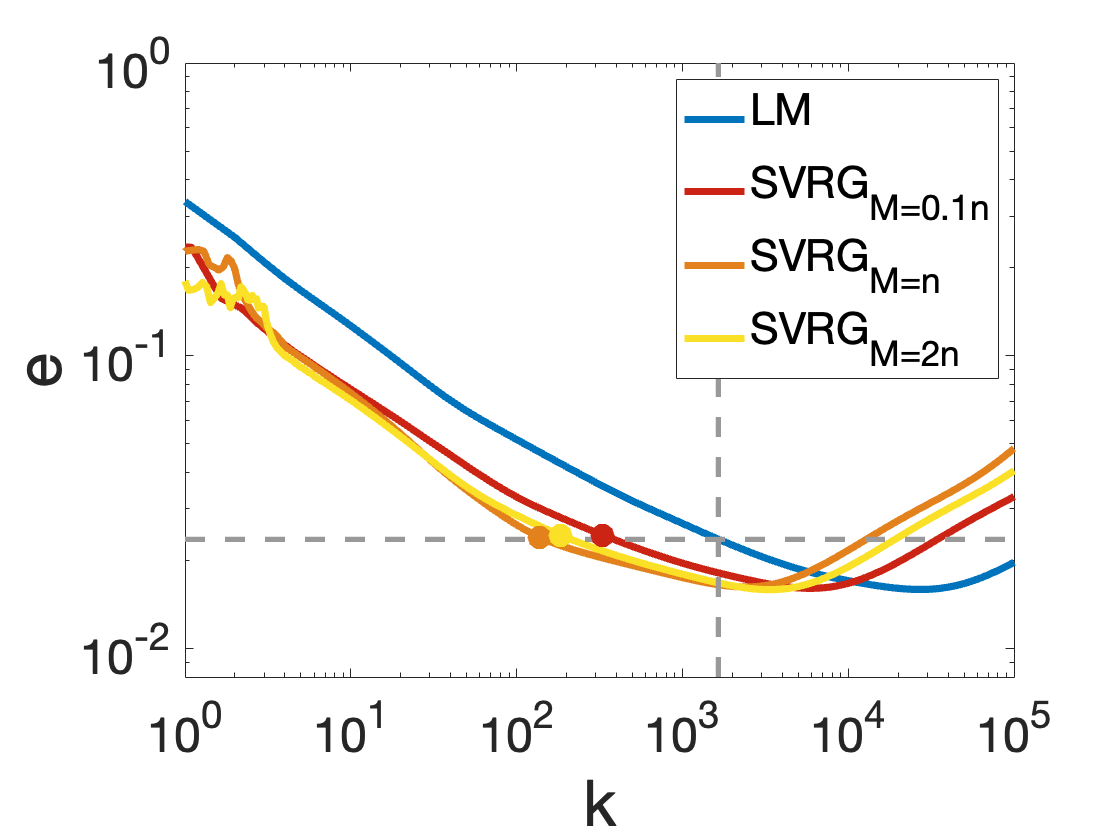}&
\includegraphics[width=0.31\textwidth,trim={1.5cm 0 0.5cm 0.5cm}]{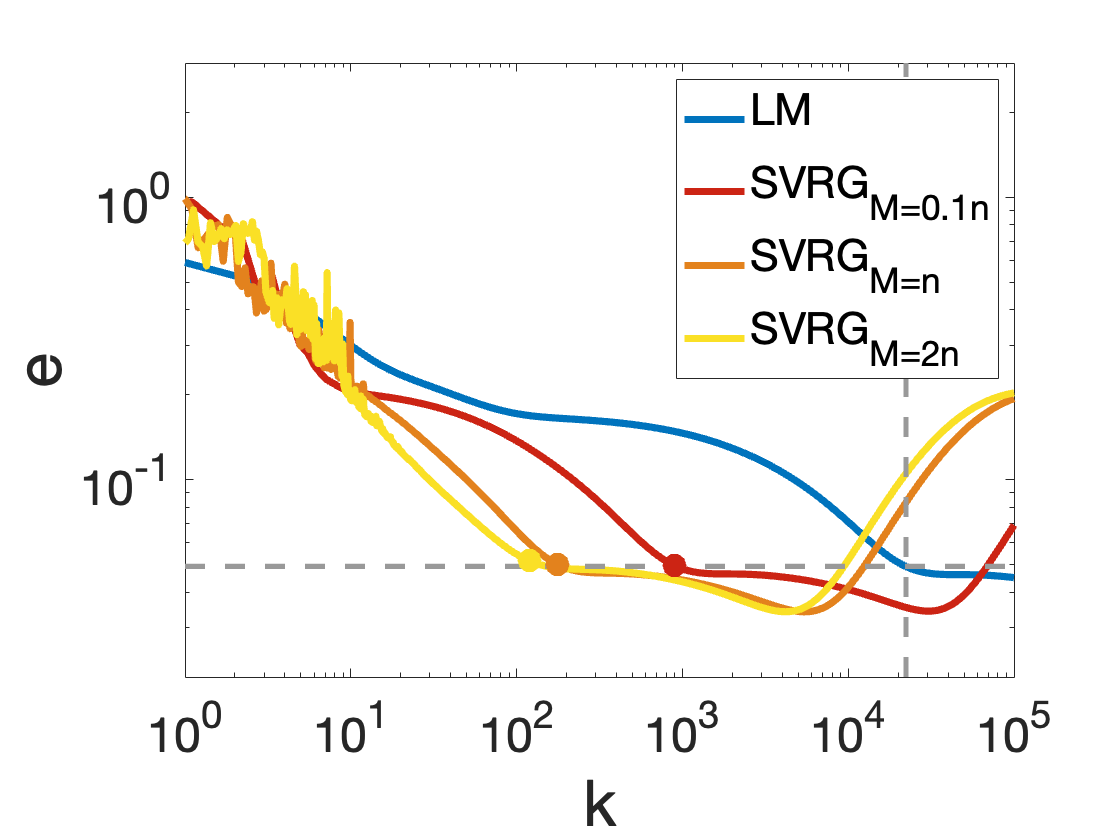}\\
\includegraphics[width=0.31\textwidth,trim={1.5cm 0 0.5cm 0.5cm}]{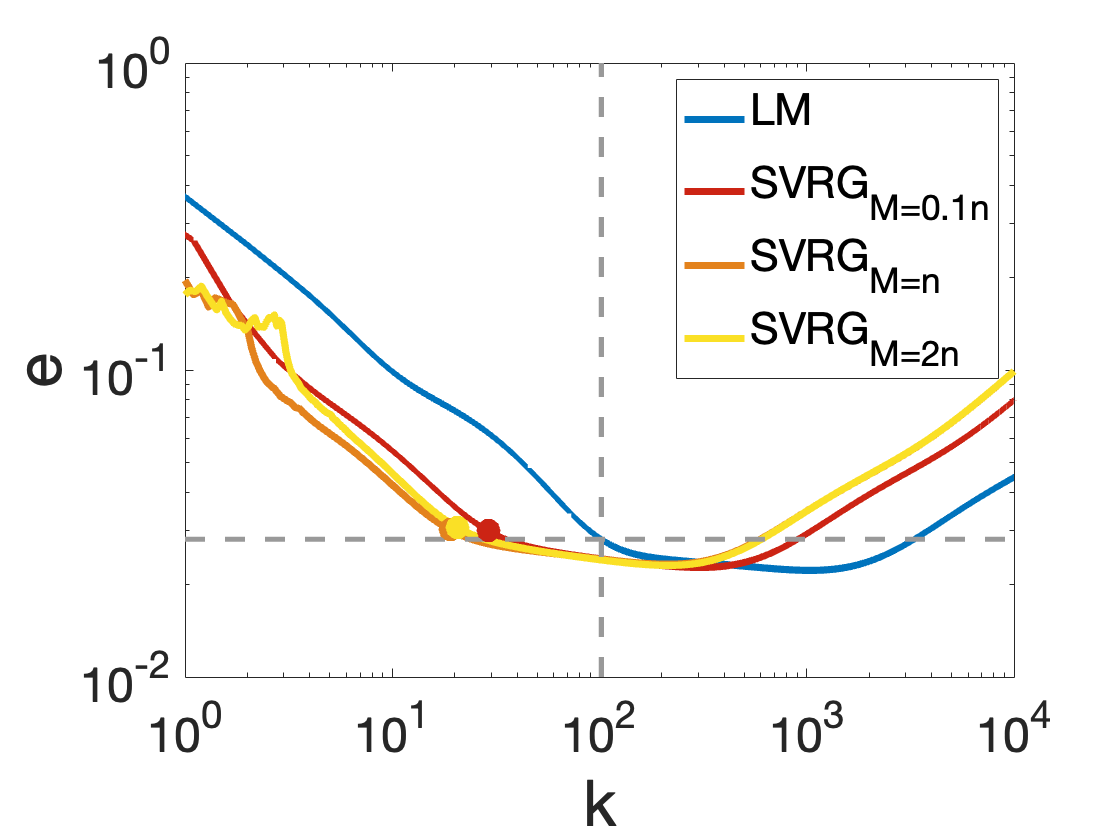}&
\includegraphics[width=0.31\textwidth,trim={1.5cm 0 0.5cm 0.5cm}]{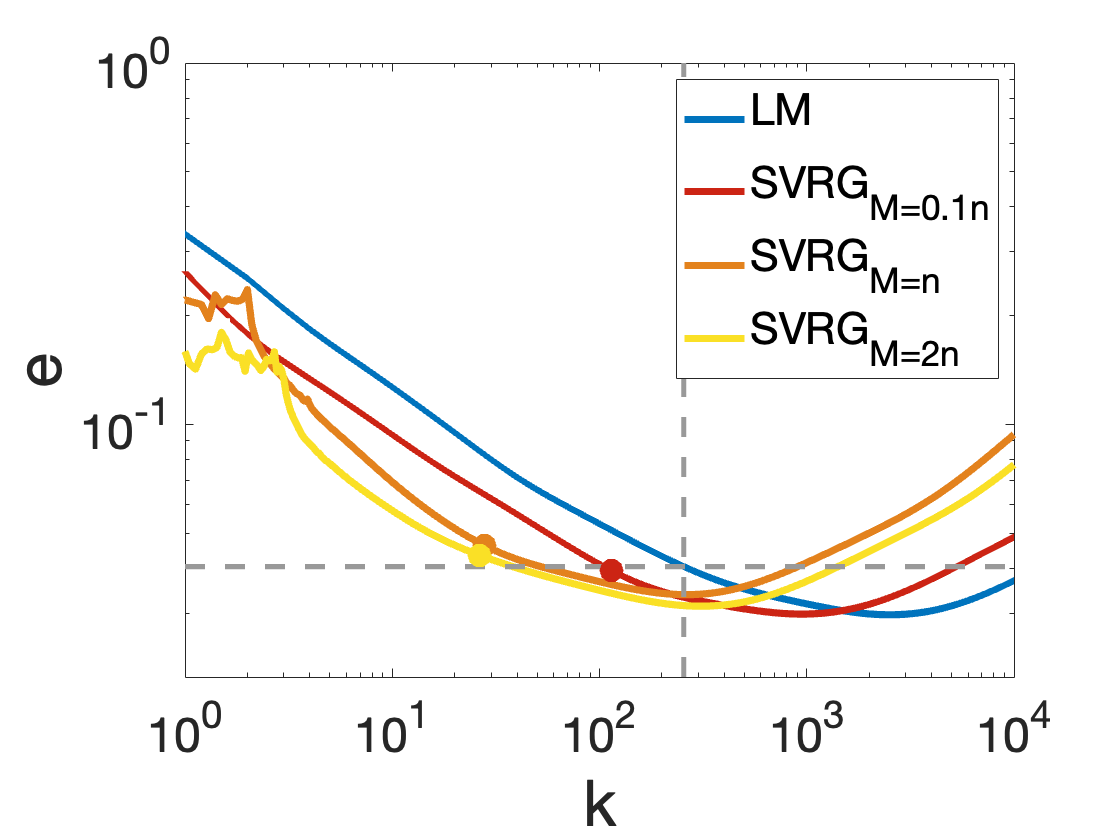}&
\includegraphics[width=0.31\textwidth,trim={1.5cm 0 0.5cm 0.5cm}]{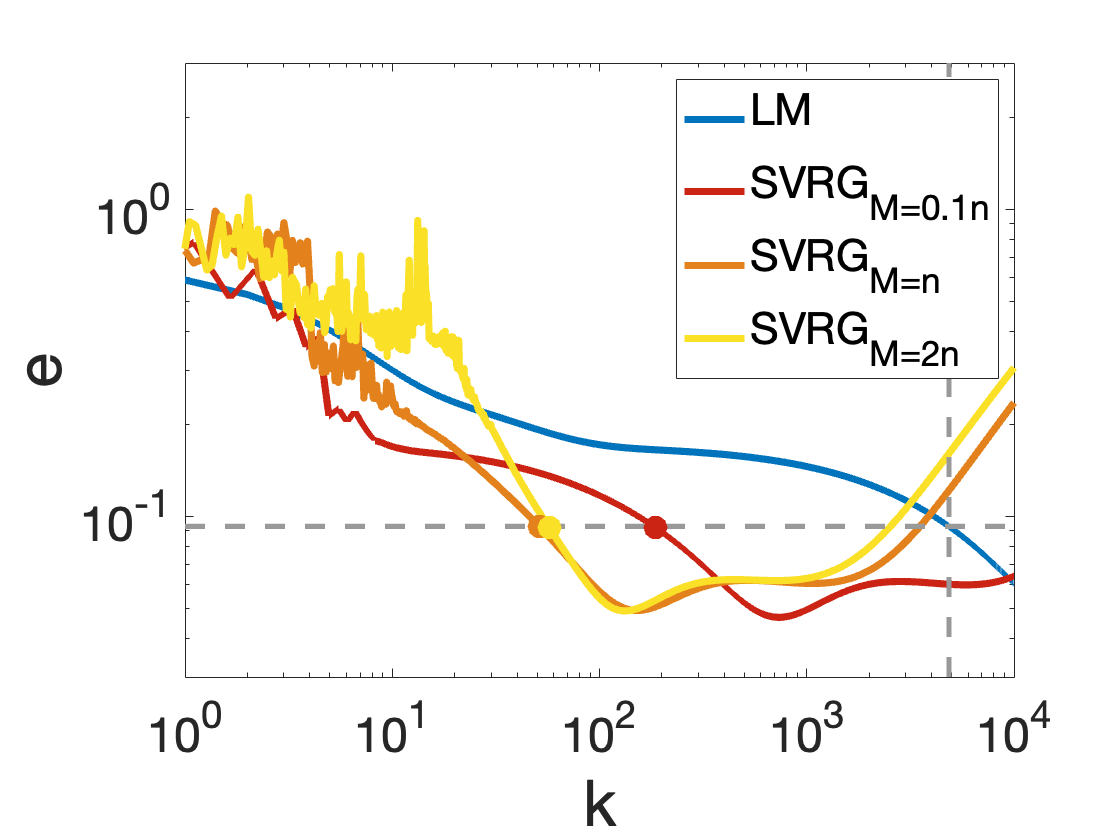}\\
\includegraphics[width=0.31\textwidth,trim={1.5cm 0 0.5cm 0.5cm}]{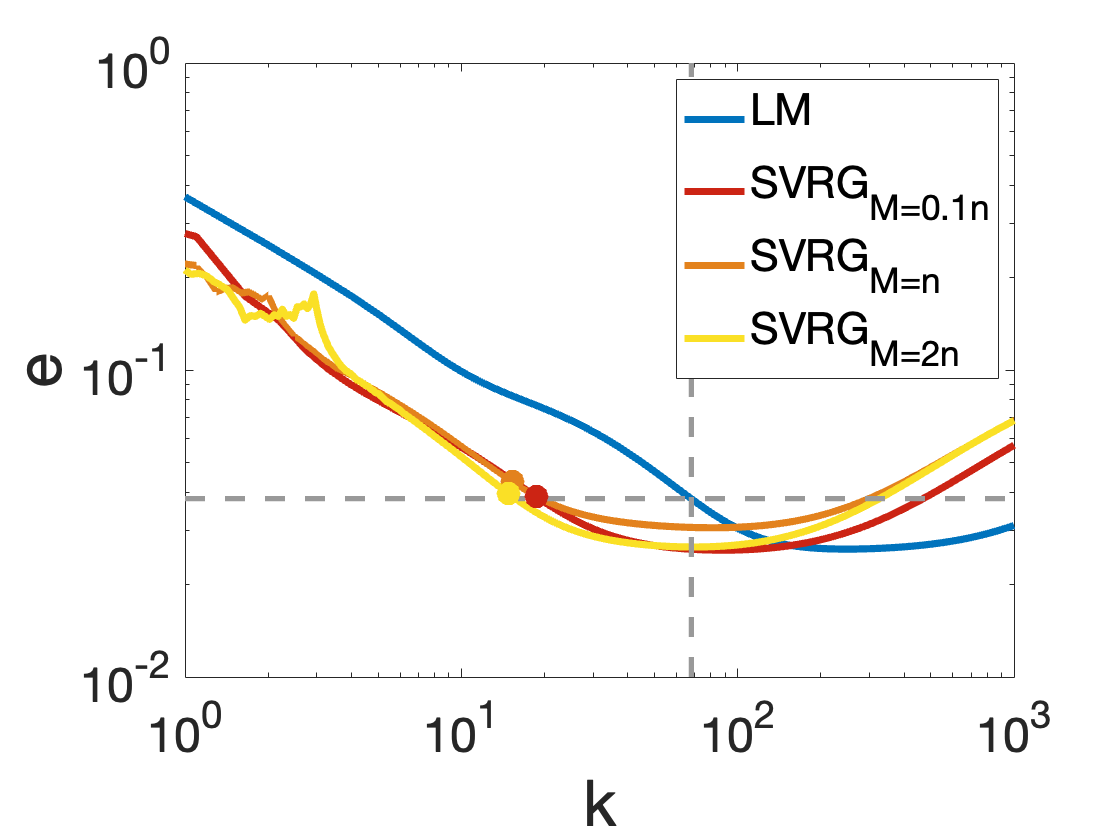}&
\includegraphics[width=0.31\textwidth,trim={1.5cm 0 0.5cm 0.5cm}]{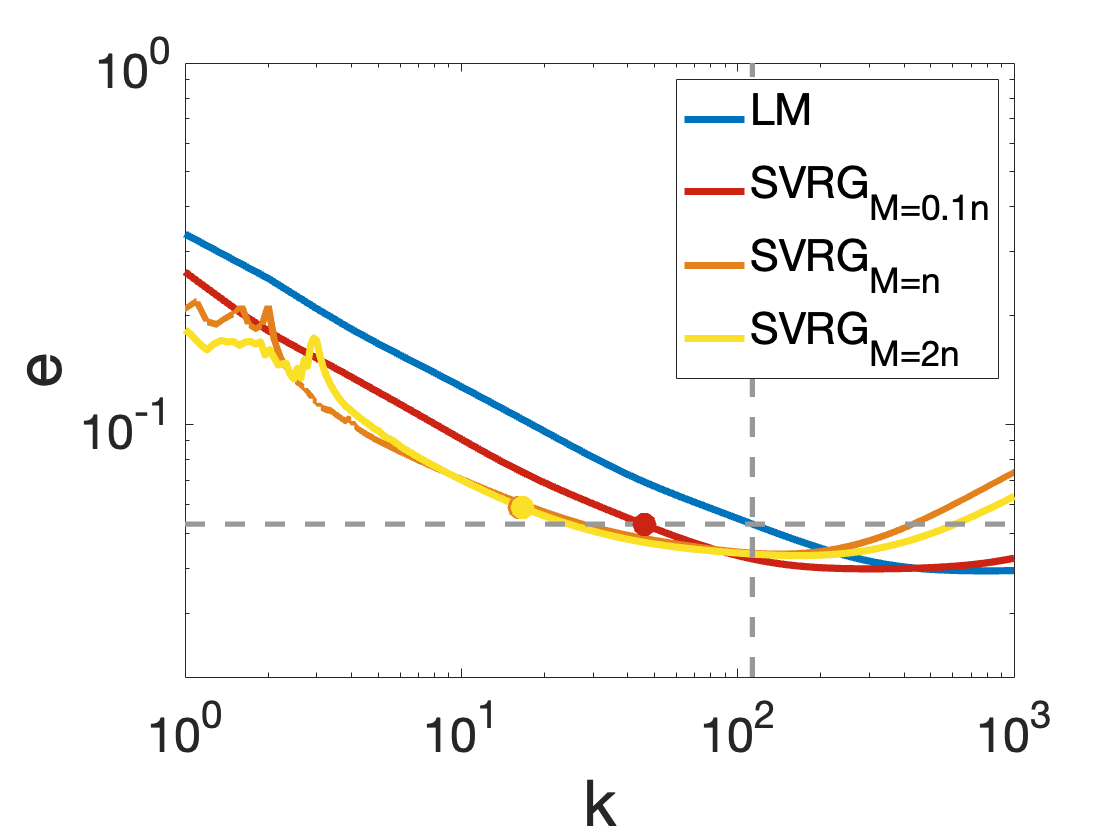}&
\includegraphics[width=0.31\textwidth,trim={1.5cm 0 0.5cm 0.5cm}]{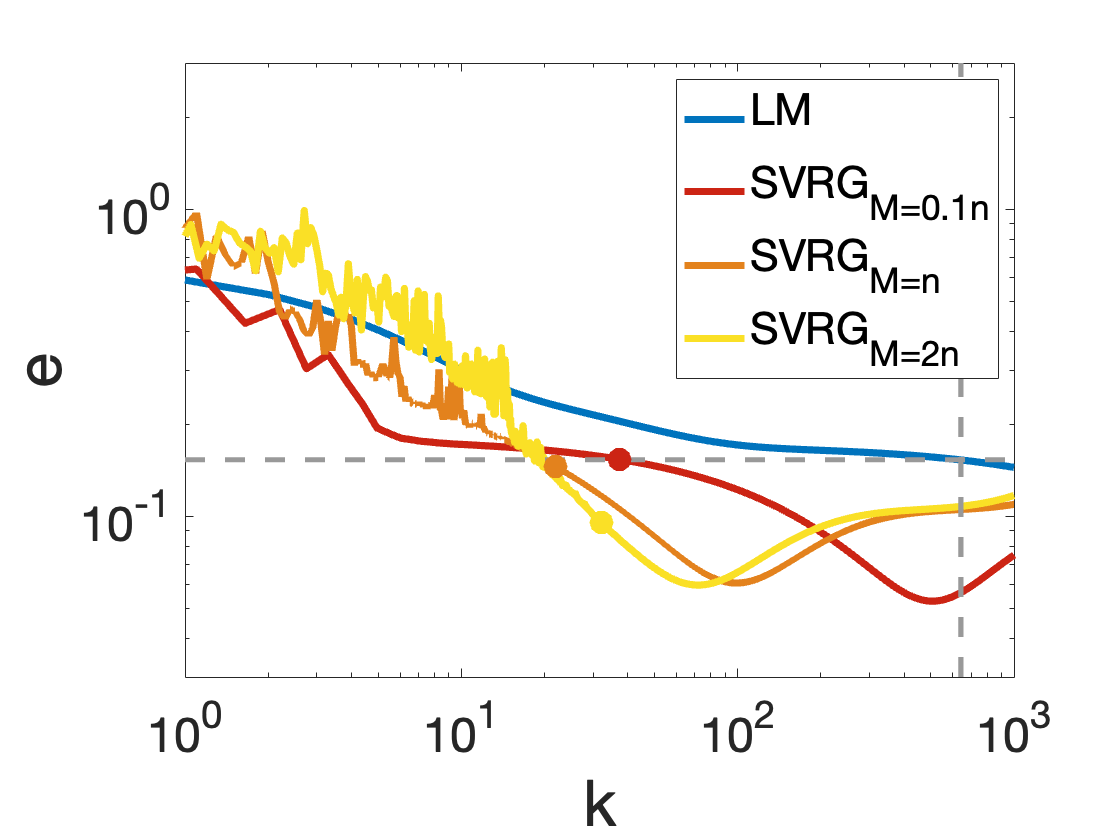}\\
\includegraphics[width=0.31\textwidth,trim={1.5cm 0 0.5cm 0.5cm}]{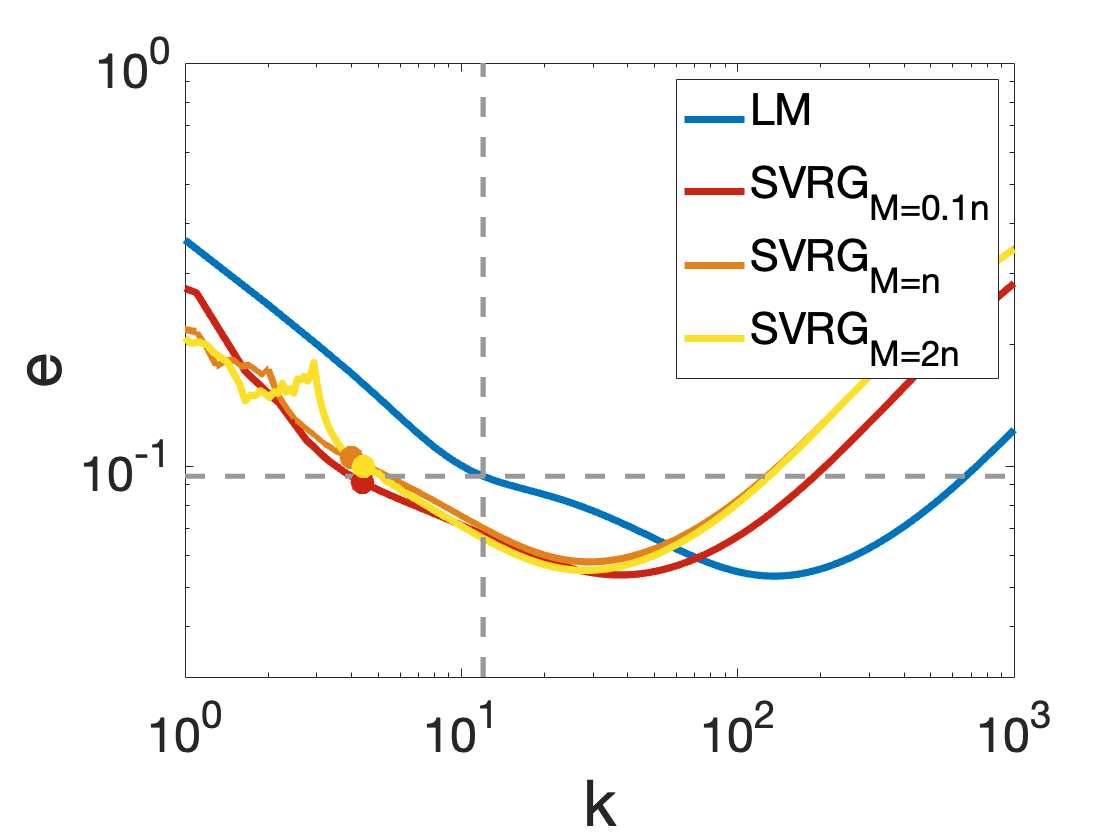}&
\includegraphics[width=0.31\textwidth,trim={1.5cm 0 0.5cm 0.5cm}]{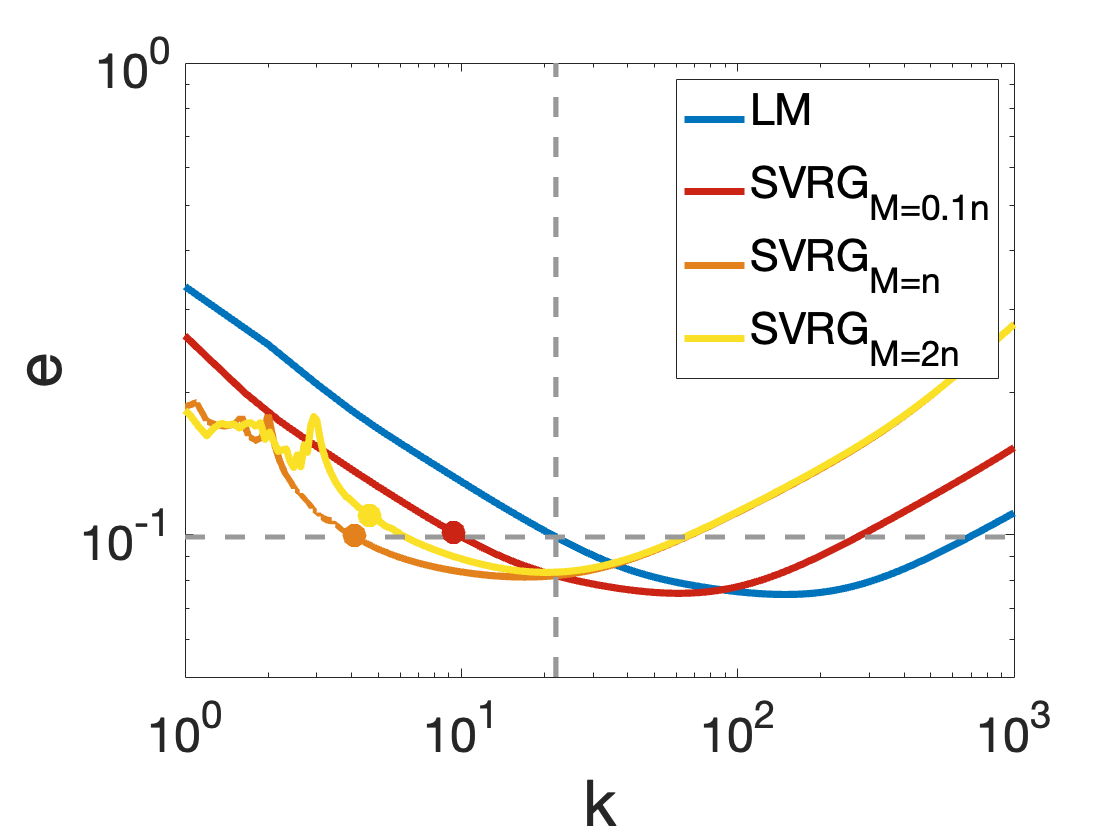}&
\includegraphics[width=0.31\textwidth,trim={1.5cm 0 0.5cm 0.5cm}]{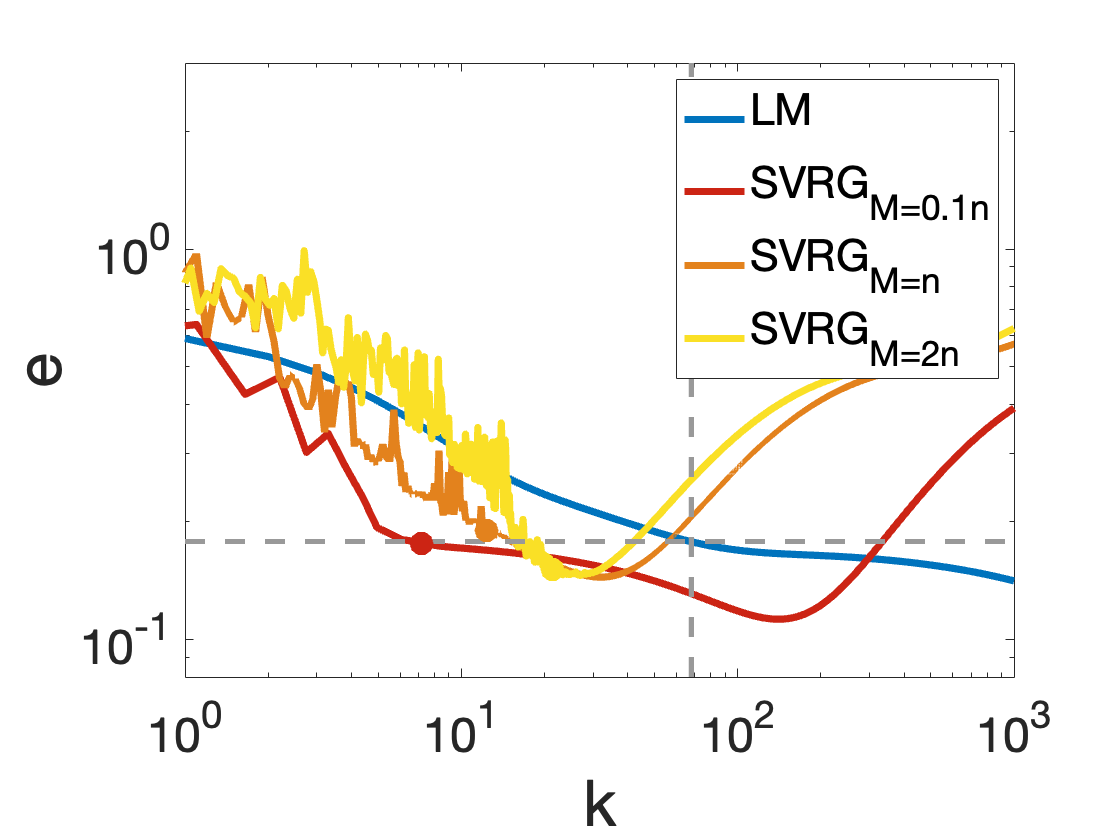}\\
\texttt{phillips}& \texttt{gravity} & \texttt{shaw}
\end{tabular}
\caption{The convergence of the relative error $e={\E[\| x_{k}^\delta-x^\dag\|^2]^\frac12}/{\|x^\dag\|}$ versus the iteration number $k$ for \texttt{phillips}, \texttt{gravity} and \texttt{shaw}. The rows from top to bottom are for $\epsilon=$1e-3,
$\epsilon=$5e-3, 
$\epsilon=$1e-2 and $\epsilon=$5e-2, respectively. The intersection of the gray dashed lines and colored points represent the stopping points, determined by the discrepancy principle, for LM and SVRG along the iteration trajectories, respectively.}\label{fig}
\end{figure}

\section{Concluding remarks} \label{sec:conc}

In this work, we have investigated the convergence of stochastic variance reduced gradient (SVRG) equipped with an \textit{a posteriori} stopping rule, i.e., the discrepancy principle, for solving linear inverse problems in Hilbert spaces. 
Inspired by the technique for analyzing the Landweber method \cite{HankeNeubauerScherzer:1995} and stochastic gradient descent \cite{JahnJin:2020} with the discrepancy principle,  
we have established the finite-iteration termination properties and convergence rates both in probability and in the uniform sense for SVRG with constant step sizes, under the canonical source condition. 
These results indicate the (near) optimality of SVRG equipped with the discrepancy principle for nonsmooth solutions.
Also we have proved the finite-iteration termination and the regularizing properties in the absence of the source condition.
The numerical results for linear inverse problems with varying degree of ill-posedness show the order-optimality and the computational efficiency of SVRG with the discrepancy principle. 

The discrepancy principle requires \textit{a priori} knowledge of the noise level. However, in practice, the noise level may be unknown, making \textit{a posteriori} heuristic stopping rules, e.g., Hanke-Raus rule \cite{HankeRaus:1996} and quasi-optimality criterion \cite{TikhonovGlasko:1964,JinLorenz:2010}, very useful for stochastic iterative methods. We leave this interesting question to future work.

\section*{Acknowledgments} The authors are grateful to two anonymous referees and the associate editor for their many constructive comments on an early version of the paper which have significantly improved the quality of the paper.

\appendix

\section{Preliminary  estimates}\label{app:prelim}

In this section, we provide several preliminary results. 
First we collect several preliminary results from \cite{JinZhou:2026}, including the estimates of the successive error $\Delta_k^\delta$ and the iteration error $e_{k}^\delta$, which are crucial in the analysis.

\begin{lemma}{\cite[Lemma A.1]{JinZhou:2026}}\label{lem:kernel}
Let Assumption \ref{ass}{\rm(i)} hold. Then for any $s> 0$ and $k\in \mathbb{N}$, there holds
\begin{align*}
\|B^s P^k\|\leq s^s c_0^{-s}  k^{-s}.
\end{align*}
\end{lemma}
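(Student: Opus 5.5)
We need to prove $\|B^sP^k\|\le s^s c_0^{-s}k^{-s}$ with $P=I-c_0B$, $B=n^{-1}A^*A$ self-adjoint and nonnegative, and $c_0\le L^{-1}$. The plan is to reduce the operator-norm bound to a scalar maximization through the spectral theorem. Since $B$ is bounded, self-adjoint and positive semidefinite, $B^sP^k=\varphi(B)$ with $\varphi(\lambda)=\lambda^s(1-c_0\lambda)^k$. Hence
\[
\|B^sP^k\|=\sup_{\lambda\in\sigma(B)}|\lambda^s(1-c_0\lambda)^k|.
\]

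The first step is to locate the spectrum. We have $\sigma(B)\subset[0,\|B\|]$, and
\[
\|B\|=n^{-1}\|A^*A\|=n^{-1}\Big\|\sum_{i=1}^n A_i^*A_i\Big\|\le \max_i\|A_i\|^2=L.
\]
Together with $c_0\le L^{-1}$ this gives $c_0\lambda\in[0,1]$ on the spectrum. So $0\le 1-c_0\lambda\le 1$, and the absolute value can be dropped.

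The second step is the scalar estimate. Substitute $t=c_0\lambda\in[0,1]$, so that $\varphi=c_0^{-s}t^s(1-t)^k$. Using $1-t\le e^{-t}$,
\[
t^s(1-t)^k\le t^se^{-kt}\le \sup_{t\ge0}t^se^{-kt}=(s/k)^se^{-s}\le s^sk^{-s},
\]
where the supremum is attained at $t=s/k$. This gives the claimed bound, in fact with an extra factor $e^{-s}$. For $k=0$ the right-hand side is infinite, so the claim is trivial under the convention, and $k\in\mathbb N$ means $k\ge1$ anyway.

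There is no real obstacle. The only point needing care is checking $c_0\|B\|\le1$ via $\|B\|\le L$, which follows from $\|\sum_i A_i^*A_i\|\le\sum_i\|A_i\|^2$.
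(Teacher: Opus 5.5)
Your proof is correct and follows the standard argument behind this bound; the paper gives no proof of its own and simply cites \cite[Lemma A.1]{JinZhou:2026}. You reduce the operator norm by spectral calculus to $c_0^{-s}\sup_{t\in[0,1]}t^s(1-t)^k$, using $c_0\|B\|\le c_0L\le 1$ to keep the spectrum of $c_0B$ in $[0,1]$. You then bound this via $1-t\le e^{-t}$ by $c_0^{-s}(s/(ek))^s$, which is the stated estimate with an extra factor $e^{-s}$ to spare.
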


\begin{lemma}{\cite[Lemma A.2]{JinZhou:2026}}\label{lem:N}
Let $R: X\rightarrow X$ be a deterministic bounded linear operator.  Then for any $k\geq 0$, there hold
\begin{align*}
\E[\|R N_k \Delta_k^\delta\|^2|\mathcal{F}_k]\leq \|R B^\frac12\|^2 \|A\Delta_k^\delta\|^2 \quad\mbox{and}\quad
\|R N_k \Delta_k^\delta\|\leq \sqrt{n}\|R B^\frac12\| \|A\Delta_k^\delta\|.
\end{align*}
\end{lemma}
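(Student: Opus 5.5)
Both bounds will follow from one structural observation: $N_k\Delta_k^\delta$ can be written as $A^*u$ for a vector $u\in Y$ whose norm is controlled by $\|A\Delta_k^\delta\|$. The operator $R$ then only enters through $\|RA^*\|$, which is rewritten in terms of $\|RB^{\frac12}\|$.

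\emph{Key identity.} Since $RA^*(RA^*)^*=RA^*AR^*=nRBR^*=n(RB^{\frac12})(RB^{\frac12})^*$, the $C^*$-identity gives
\begin{equation*}
\|RA^*\|^2=n\|RB^{\frac12}\|^2 .
\end{equation*}

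\emph{Deterministic bound.} Write $v=A\Delta_k^\delta=(v_1,\dots,v_n)^\top$ with $v_j=A_j\Delta_k^\delta$, and let $i=i_k$. Using $B=n^{-1}\sum_j A_j^*A_j$ and $A^*u=\sum_j A_j^*u_j$, we get $N_k\Delta_k^\delta=A^*u$, where $u_j=n^{-1}v_j$ for $j\neq i$ and $u_i=(n^{-1}-1)v_i$. Each coefficient has modulus at most $1$, so $\|u\|\le\|v\|=\|A\Delta_k^\delta\|$. Hence
\begin{equation*}
\|RN_k\Delta_k^\delta\|\le\|RA^*\|\,\|A\Delta_k^\delta\|=\sqrt{n}\,\|RB^{\frac12}\|\,\|A\Delta_k^\delta\|,
\end{equation*}
which is the second inequality. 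This packaging into a single $A^*u$ is the delicate point. Splitting $N_k\Delta_k^\delta=B\Delta_k^\delta-A_i^*A_i\Delta_k^\delta$ and applying the triangle inequality would give the weaker factor $\sqrt n+n^{-\frac12}$.

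\emph{Conditional second moment.} Conditionally on $\mathcal{F}_k$, the vector $\Delta_k^\delta$ is fixed and $i_k$ is uniform on $\{1,\dots,n\}$. Also $\E[A_{i_k}^*A_{i_k}\Delta_k^\delta\,|\,\mathcal{F}_k]=B\Delta_k^\delta$, so $\E[N_k\Delta_k^\delta|\mathcal{F}_k]=0$. The bias--variance identity then gives
\begin{equation*}
\E[\|RN_k\Delta_k^\delta\|^2|\mathcal{F}_k]=\frac1n\sum_{i=1}^n\|RA_i^*v_i\|^2-\|RB\Delta_k^\delta\|^2\le\frac1n\sum_{i=1}^n\|RA_i^*v_i\|^2 .
\end{equation*}
Let $E_i$ denote the embedding of $Y_i$ into the $i$-th slot of $Y$. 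Then $A_i^*v_i=A^*E_iv_i$, so $\|RA_i^*v_i\|\le\|RA^*\|\,\|v_i\|$. Summing over $i$,
\begin{equation*}
\E[\|RN_k\Delta_k^\delta\|^2|\mathcal{F}_k]\le\frac1n\|RA^*\|^2\sum_{i=1}^n\|v_i\|^2=\|RB^{\frac12}\|^2\|A\Delta_k^\delta\|^2,
\end{equation*}
which is the first inequality.
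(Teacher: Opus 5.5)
Your proof is correct and complete. The identity $\|RA^*\|^2=n\|RB^{\frac12}\|^2$, the representation $N_k\Delta_k^\delta=A^*u$ with $\|u\|\le\|A\Delta_k^\delta\|$, the conditional bias--variance identity, and the bound $\|RA_i^*\|\le\|RA^*\|$ via the slot embedding $E_i$ are all sound. Your remark that the naive triangle inequality only yields the factor $\sqrt n+n^{-\frac12}$ is also accurate.

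The paper gives no proof of this lemma; it is quoted from the cited work, so there is no in-paper argument to match. Your treatment of the conditional second moment is the standard one.

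For the pathwise bound there is a shorter route that makes your ``delicate point'' unnecessary. Given $\mathcal{F}_k$, the quantity $\|RN_k\Delta_k^\delta\|^2$ is one of the $n$ nonnegative numbers $\|R(B-A_i^*A_i)\Delta_k^\delta\|^2$, each taken with probability $n^{-1}$. Hence $\|RN_k\Delta_k^\delta\|^2\le\sum_{i=1}^n\|R(B-A_i^*A_i)\Delta_k^\delta\|^2=n\,\E[\|RN_k\Delta_k^\delta\|^2|\mathcal{F}_k]$, and the second inequality follows from the first in one line.

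What your explicit packaging buys instead is a deterministic, distribution-free statement: $N_k\Delta_k^\delta\in\mathrm{Range}(A^*)$, with a preimage of norm at most $\|A\Delta_k^\delta\|$. More broadly, your whole argument uses $R$ only through $RA^*$. That is what is needed to justify the paper's later applications with the unbounded choices $R=B^{-\nu}$ and $R=B^{-\frac12}$, which the bounded-$R$ statement does not literally cover. These uses are legitimate because $\|B^{-\nu}A^*\|\le\sqrt n\|B^{\frac12-\nu}\|$ for $\nu\le\frac12$.
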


The next result provides \textit{a priori} bounds on $\E[\|A\Delta_k^\delta\|^2]$ and $\|A\Delta_k^\delta\|$, which are involved in the estimates for both the residual $r_k^\delta$ and the iteration error $e_k^\delta$; see Lemmas \ref{lem:res} and \ref{lem:bias-var}. The result slightly refines \cite[Theorem 3.1]{JinZhou:2026} with a more explicit dependence on the update frequency $M$.
\begin{lemma}{\cite[Theorem 3.1]{JinZhou:2026}}\label{lem:Delta}
Let Assumption \ref{ass}{\rm(i)} hold. Then there exist some  $c_1$ and $c_2$ independent of $k$, $n$, {$M$}, $\delta$ and $\nu$ such that, for any $k\geq 0$, 
\begin{align*}
\E[\|A\Delta_{k}^\delta\|^2]&\leq (c_1+c_2\delta^2){M^2}(k+M)^{-2},  \quad c_0<\overline{C_0}, \\
\|A\Delta_{k}^\delta\|&\leq (c_1+c_2\delta){M}(k+M)^{-1},\quad c_0<C_0. 
\end{align*}
\end{lemma}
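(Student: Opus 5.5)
The plan is to write a recursion for $\Delta_k^\delta$ within each inner loop and exploit the contraction of $P=I-c_0B$ together with the decay of the anchor gradient $g_K$. Fix an epoch $K$ and write $k=KM+t$, $0\le t\le M$. From Algorithm \ref{alg:svrg},
\begin{align*}
\Delta_{k+1}^\delta=\Delta_k^\delta-c_0A_{i_k}^*A_{i_k}\Delta_k^\delta-c_0g_K=(I-c_0A_{i_k}^*A_{i_k})\Delta_k^\delta-c_0g_K ,
\end{align*}
with $\Delta_{KM}^\delta=0$. The noise-free parts and the noise parts are separated by linearity. The constant $c_2\delta^2$ (resp.\ $c_2\delta$) absorbs the contribution of $\xi$ through $g_K$, using $\|n^{-1}c_0 A\sum_j P^jA^*\xi\|\le\delta$ as in \eqref{eqn:1delta}.

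\textbf{Bounding the full gradient.} Since $\|g_K\|=n^{-1}\|A^*r_{KM}^\delta\|$, the first step is to bound $\|A^*r_{KM}^\delta\|$ (or $\|Ag_K\|$) by $O(M(KM+M)^{-1})$ plus noise. For the bias part, $\|A A^*AP^{KM}e_0\|$, I would use Lemma \ref{lem:kernel}. Since $e_0\in\mathcal N(A)^\perp$ only, no source condition is available, so I use $\|BP^k\|\le (c_0k)^{-1}$ together with $\|B^{1/2}\|$, giving a $k^{-1}$ decay times $\|e_0\|$. The stochastic part of $r_{KM}^\delta$ is controlled inductively through Lemma \ref{lem:res}, with $\|B^{1/2}P^j B\|$ factors.

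\textbf{Induction on $k$.} I would set up the induction hypothesis $\E\|A\Delta_j^\delta\|^2\le C M^2(j+M)^{-2}$ for all $j<k$. Taking the conditional expectation of $\|A\Delta_{k+1}^\delta\|^2$ and using $c_0\le L^{-1}$, so that $I-c_0A_i^*A_i$ is nonexpansive in the $A$-seminorm after averaging, gives
\begin{align*}
\E\|A\Delta_{k+1}\|^2\le \E\|A\Delta_k\|^2+2c_0\|A\Delta_k\|\,\|Ag_K\|+c_0^2\|Ag_K\|^2 .
\end{align*}
Summing over at most $M$ inner steps gives $\E\|A\Delta_{k}\|\lesssim M c_0\|Ag_K\|$. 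This is where the factor $M$ in the bound appears.

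\textbf{Closing the induction.} Combined with the $(KM+M)^{-1}$ bound on $\|Ag_K\|$, this yields the claimed $M(k+M)^{-1}$ decay. The weighted sums of earlier $\Delta_j$ in $g_K$ are handled by sum estimates of the form $\sum_j (k-j)^{-2}(j+M)^{-2}\lesssim k^{-2}$. The constraint $c_0<\overline{C_0}$, with its $\sqrt{M}$ and $\ln M$ dependence, is exactly what makes the constant reproduce itself, i.e.\ the factor $c_0 M\|A\|\cdots<1$.

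\textbf{Uniform bound.} For the pathwise bound I would repeat the argument with norms instead of second moments, using the second inequality of Lemma \ref{lem:N}. This costs a factor $\sqrt n$ and a logarithm from $\sum_j (k-j)^{-1}(j+M)^{-1}\lesssim k^{-1}\ln k$. Both losses are absorbed into the smaller threshold $C_0$, which explains the $\ln(2e^2n\sqrt L\|A\|^{-1})$ factor.

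The main obstacle is closing the induction. The feedback of past $\Delta_j$ into $r_{KM}^\delta$ must be shown to contribute a constant times $M/(KM+M)$ with a constant strictly less than the target. This requires tracking the $M$ dependence carefully through the convolution sums.
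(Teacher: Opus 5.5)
Your skeleton (induction on $k$, a trivially handled initial range, a split into bias, noise and fluctuation, and a closing coefficient $<1$ supplied by the step-size restriction) matches the paper's. However, two steps of your inner-loop argument fail.

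\textbf{The averaged step is not nonexpansive.} $I-c_0A_i^*A_i$ is \emph{not} nonexpansive in the $A$-seminorm after averaging, for any $c_0>0$. Take $n=2$, $X=\mathbb{R}^2$, $A_1v=v_1$, $A_2v=v_1\cos\theta+v_2\sin\theta$ (so $L=1$), and $v$ with $Av=(1,-1)^\top$. Then $\E_i\|A(I-c_0A_i^*A_i)v\|^2=(1-c_0)^2+(1+c_0\cos\theta)^2$, which exceeds $\|Av\|^2=2$ for every $c_0\in(0,1]$ once $\theta$ is small. What is true is $\E_i\|A(I-c_0A_i^*A_i)v\|^2=\|APv\|^2+c_0^2\E_i\|A(B-A_i^*A_i)v\|^2$, and the last expectation is only bounded by $n^{-1}L\|A\|^2\|Av\|^2$. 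Accumulated over an epoch, this is the term $c_0L\cdot 5c_0Mn^{-1}\|A\|^2$ in the paper's proof, i.e.\ precisely the $\sqrt{M}$-entry of $\overline{C_0}$. Your recursion drops it.

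\textbf{The main gap: the inner-loop smoothing is discarded.} The step $\E[\|A\Delta^\delta_{KM+t}\|^2]^{1/2}\lesssim tc_0\,\E[\|Ag_K\|^2]^{1/2}$ applies the triangle inequality over the $t$ drift steps, and with it the induction cannot close under $c_0<\overline{C_0}$. The fluctuating part of $Ag_K$ is $ABq^\delta_{KM}$.
\begin{itemize}
\item If you bound it by Lemmas \ref{lem:N} and \ref{lem:kernel}, e.g.\ $\|B^2P^s\|\le 4c_0^{-2}s^{-2}$, all powers of $c_0$ cancel against $(tc_0)^2$. The coefficient multiplying the induction constant is then of order $nM^2$ for every step size.
\item Even with $\|B^2P^s\|\le\|B\|^2$ for small $s$, you are left with order $c_0^3L\|B\|^2M^2$. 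When $\|A\|^2\approx nL$ and $c_0L\asymp M^{-1/2}$, which $\overline{C_0}$ permits, this grows like $M^{1/2}$.
\end{itemize}
The missing idea is to keep the operator identity $c_0\sum_{s<t}P^sB=I-P^t$. By Lemma \ref{lem:bias-var}, $g_K=BP^{KM}e_0^\delta+Bq^\delta_{KM}-n^{-1}P^{KM}A^*\xi$. Unrolling $\Delta^\delta_{j+1}=P\Delta^\delta_j+c_0N_j\Delta^\delta_j-c_0g_K$ from $\Delta^\delta_{KM}=0$ then gives
\begin{align*}
A\Delta^\delta_{KM+t}=-A(I-P^t)P^{KM}e_0^\delta+\frac{c_0}{n}\sum_{s=0}^{t-1}AP^{KM+s}A^*\xi-A(I-P^t)q^\delta_{KM}+c_0\sum_{j=KM}^{KM+t-1}AP^{KM+t-1-j}N_j\Delta^\delta_j .
\end{align*}
The four terms are handled as follows.
\begin{itemize}
\item The first two terms are at most $(\|A\|\|e_0^\delta\|+\delta)\,t/(KM)$, since $\|(I-P^t)P^{KM}\|\le t/(KM)$ and $\|n^{-1}AP^mA^*\|=\|BP^m\|\le (c_0m)^{-1}$. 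This is how the noise acquires its decay; the non-decaying bound \eqref{eqn:1delta} you cite cannot produce $c_2\delta^2M^2(k+M)^{-2}$.
\item For the third term, use $\E_i\|RA_i^*A_iv\|^2\le n^{-1}L\|R\|^2\|Av\|^2$. This gives $\E\|A(I-P^t)q^\delta_{KM}\|^2\le c_0^2L\sum_s\|B^{\frac12}(I-P^t)P^s\|^2\,\E\|A\Delta^\delta_{KM-1-s}\|^2$, with $\|B^{\frac12}(I-P^t)P^s\|^2\le\min\{(2c_0s)^{-1},\tfrac{27}{8}t^2c_0^{-1}s^{-3}\}$ for $s\ge1$.
\item Here one net power of $c_0$ survives, so the coefficient is of order $c_0L(1+\ln M)$. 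This is the $\ln M$-entry of $\overline{C_0}$, chosen to make it $<1/9$ so that it absorbs $(1+2K^{-1})^2$.
\item The last (within-epoch) term gives the factor $c_0^2LM\|A\|^2/n$ discussed above.
\end{itemize}

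\textbf{The pathwise case.} A $\ln k$ loss cannot be absorbed into a $k$-independent threshold such as $C_0$, because the target $M(k+M)^{-1}$ carries no logarithm. The induction needs a closing coefficient bounded below $1$ uniformly in $K$. In the paper this is $(c_0\sqrt{L}M\|A\|+\tfrac{4\sqrt{e}}{7})(1+2K^{-1})$, which is $<1$ for $K\ge35$.
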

\begin{proof}
For the cases $k\leq K_0M$ with some $K_0\geq 1$, the estimates in the lemma hold trivially for sufficiently large $c_1$ and $c_2$. When $k=KM+t$ with $K\geq K_0$ and $t=0,\cdots,M-1$, following the proof of \cite[Theorem 3.1]{JinZhou:2026}, 
we have 
\begin{align*}
\E[\|A\Delta_{k}^\delta\|^2]\leq& \Big[c_0L\max\big(5c_0Mn^{-1}\|A\|^2,\tfrac12+\ln M+2c_0\|B\|+4
(\tfrac72+\tfrac32\ln M)\big)(c_1+c_2\delta^2)M^2\\
&+2(\|A\|^2\|e_0^\delta\|^2+ \delta^2) M^2\Big](1+2K^{-1})^{2}(k+M)^{-2}\\
\leq &(c_1+c_2\delta^2)M^2(k+M)^{-2}, 
\end{align*}
for any $c_0<\overline{C_0}$ and $K\geq1$ with sufficiently large $c_1$ and $c_2$.
Similarly, there holds
\begin{align*}
\|A\Delta_{k}^\delta\|< &\Big[\big(c_0\sqrt{L}M\|A\|+\tfrac{4\sqrt{e}}{7}\big)(c_1+c_2\delta)M+(\|A\|\|e_0^\delta\|+ \delta) M \Big](1+2K^{-1})(k+M)^{-1}\\
\leq &(c_1+c_2\delta)M(k+M)^{-1}
\end{align*}
for any $c_0<C_0$ and $K\geq 35$, with sufficiently large $c_1$ and $c_2$.
This completes the proof of the lemma.
\end{proof}

The following result provides a decomposition of the iteration error $e_k^\delta$, which is useful for bounding the residuals, deriving the convergence rates in Theorems \ref{thm:dp_E} and \ref{thm:dp}, and establishing the regularizing property in Theorem \ref{thm:regularizing}
\begin{lemma}{\cite[Lemma 3.1]{JinZhou:2026}}\label{lem:bias-var}
Let Assumption \ref{ass}{\rm(i)} hold. Then for any $k\geq 0$, there hold
\begin{align*}
\E[e_{k}^\delta]=&P^{k}e_0^\delta+n^{-1}c_0\sum_{j=0}^{k-1}P^j A^* \xi\quad  \mbox{and} \quad
e_{k}^\delta-\E[e_{k}^\delta]=q_k^\delta,
\end{align*}
with the quantity $q_k^\delta$ defined in \eqref{eqn:qk}.
\end{lemma}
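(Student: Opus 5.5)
The plan is to derive an exact one-step recursion for the error $e_k^\delta$ under the constant step size $\eta_k=c_0$ of Assumption \ref{ass}(i), unroll it, and then take expectations using the martingale-difference structure of the $N_j$ terms.

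\emph{Step 1: one-step recursion.} Write $k=KM+t$, so the anchor is $x_{KM}^\delta$ and $\Delta_k^\delta=x_k^\delta-x_{KM}^\delta$. Using $y^\delta=Ax^\dag+\xi$, the full gradient is
\begin{equation*}
g_K=n^{-1}A^*(Ax_{KM}^\delta-y^\delta)=Be_{KM}^\delta-n^{-1}A^*\xi .
\end{equation*}
Substituting $A_{i_k}^*A_{i_k}=B-N_k$ into the update of Algorithm \ref{alg:svrg} gives
\begin{align*}
e_{k+1}^\delta&=e_k^\delta-c_0\big((B-N_k)\Delta_k^\delta+Be_{KM}^\delta\big)+c_0n^{-1}A^*\xi\\
&=e_k^\delta-c_0B\big(\Delta_k^\delta+e_{KM}^\delta\big)+c_0N_k\Delta_k^\delta+c_0n^{-1}A^*\xi .
\end{align*}
Since $\Delta_k^\delta+e_{KM}^\delta=e_k^\delta$, this collapses to
\begin{equation*}
e_{k+1}^\delta=Pe_k^\delta+c_0N_k\Delta_k^\delta+c_0n^{-1}A^*\xi .
\end{equation*}
The key point is that the anchor terms cancel exactly, which is what produces the deterministic operator $P$.

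\emph{Step 2: unrolling.} Induction on $k$ gives
\begin{equation*}
e_k^\delta=P^ke_0^\delta+c_0\sum_{j=0}^{k-1}P^{k-1-j}N_j\Delta_j^\delta+n^{-1}c_0\sum_{j=0}^{k-1}P^jA^*\xi .
\end{equation*}
The $j=0$ summand vanishes because $\Delta_0^\delta=0$. Hence the middle sum is exactly $q_k^\delta$ from \eqref{eqn:qk}, and the last sum is the stated noise term.

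\emph{Step 3: expectation.} The first and third terms are deterministic, since $P$, $A^*$, $\xi$ and $e_0^\delta$ are all non-random. For the middle term, note that $\Delta_j^\delta$ is $\mathcal{F}_j$-measurable, while $i_j$ is independent of $\mathcal{F}_j$. Therefore
\begin{equation*}
\E[N_j\Delta_j^\delta\,|\,\mathcal{F}_j]=\E[N_j]\Delta_j^\delta=0 .
\end{equation*}
Because $P^{k-1-j}$ is deterministic and bounded, $\E[P^{k-1-j}N_j\Delta_j^\delta]=0$ by the tower property. Integrability is automatic: $\Delta_j^\delta$ takes only finitely many values (one per index path), and each $N_j$ is bounded. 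This gives $\E[q_k^\delta]=0$, and both identities follow. There is no real obstacle here; the only point requiring care is the cancellation in Step 1.
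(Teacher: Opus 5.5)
Your proof is correct and is essentially the standard argument for this lemma; the paper quotes the lemma from earlier work without reproducing a proof. The anchor terms cancel to give $e_{k+1}^\delta=Pe_k^\delta+c_0N_k\Delta_k^\delta+c_0n^{-1}A^*\xi$, unrolling with $\Delta_0^\delta=0$ produces exactly $q_k^\delta$, and $\E[N_j\Delta_j^\delta\,|\,\mathcal{F}_j]=0$ with the tower property gives $\E[q_k^\delta]=0$, where your finite-path integrability remark justifies moving the deterministic operators $P^{k-1-j}$ through the expectation.
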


The following interpolation inequality is well known \cite[Proposition 8.19]{EnglHankeNeubauer:1996}.
\begin{lemma}\label{lem:moment}
For any $-\infty< p<q<r<\infty$ and $x\in X$, there holds
\begin{equation*}
    \|(A^*A)^qx\| \leq \|(A^*A)^rx\|^{\frac{q-p}{r-p}}\|(A^*A)^px\|^{\frac{r-q}{r-p}}.
\end{equation*}
\end{lemma}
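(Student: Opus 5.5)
We prove the interpolation inequality of Lemma \ref{lem:moment} by spectral calculus for the self-adjoint nonnegative operator $T:=A^*A$, combined with Hölder's inequality with respect to a spectral measure. Let $E_\lambda$ be the spectral family of $T$ on $[0,\|T\|]$, and for fixed $x\in X$ set $d\mu(\lambda)=d\|E_\lambda x\|^2$, a finite positive Borel measure. For any real exponent $\alpha$ we interpret $\|T^\alpha x\|$ as
\[
\|T^\alpha x\|^2=\int \lambda^{2\alpha}\,d\mu(\lambda)\in[0,\infty].
\]
If $\alpha<0$, the value may be $+\infty$, or one restricts to $x$ for which it is finite. The convention $0^{2\alpha}$ on the atom at $\lambda=0$ needs care; for negative powers one works on $\mathcal{N}(T)^\perp$, consistent with the use $x=B^\nu w$, $w\in\mathcal{N}(A)^\perp$, in Assumption \ref{ass}(ii). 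Note that $B=n^{-1}A^*A$ differs from $T$ only by a scalar factor, and that factor cancels out of the inequality because the exponents on the right sum to the exponent on the left.

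The key step is the following. Set
\[
\theta=\frac{q-p}{r-p}\in(0,1),
\]
so that $2q=\theta\,(2r)+(1-\theta)\,(2p)$. Then
\[
\lambda^{2q}=(\lambda^{2r})^{\theta}(\lambda^{2p})^{1-\theta}.
\]
Apply Hölder's inequality with conjugate exponents $1/\theta$ and $1/(1-\theta)$ to get
\[
\int\lambda^{2q}\,d\mu\le\Big(\int\lambda^{2r}\,d\mu\Big)^{\theta}\Big(\int\lambda^{2p}\,d\mu\Big)^{1-\theta}.
\]
Taking square roots gives
\[
\|T^qx\|\le\|T^rx\|^{\theta}\|T^px\|^{1-\theta},
\]
which is exactly the claim, since $1-\theta=\frac{r-q}{r-p}$.

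The remaining work is bookkeeping for degenerate cases. If either factor on the right is infinite, the inequality is trivial with the convention $0\cdot\infty$ handled appropriately: if one factor is $0$ then $\mu$ vanishes off $\{0\}$ where relevant, and the left side vanishes as well. The main, though mild, obstacle is to define fractional and negative powers rigorously and to ensure that the point $\lambda=0$ is treated consistently. I would restrict to $x\in\mathcal{N}(T)^\perp$ whenever $p<0$, so that $\mu(\{0\})=0$ and all integrands are well defined $\mu$-a.e. Alternatively, one can simply cite \cite[Proposition 8.19]{EnglHankeNeubauer:1996}, whose proof is precisely this Hölder argument.
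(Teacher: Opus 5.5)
Your proof is correct and takes essentially the paper's route. The paper gives no argument of its own and simply cites \cite[Proposition 8.19]{EnglHankeNeubauer:1996}, whose proof is this same application of H\"older's inequality, with exponents $1/\theta$ and $1/(1-\theta)$, to the spectral measure $d\|E_\lambda x\|^2$. Your restriction to $x\in\mathcal{N}(A)^\perp$ when $p<0$ is the right way to read the lemma's ``for any $x\in X$'', and it is consistent with how the lemma is applied with $B^{-\nu}$ in the proofs of Theorems \ref{thm:dp_E} and \ref{thm:dp}.
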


\section{High-probability events}
In this part, we discuss three high-probability events, i.e., the event $\{k(\delta)\leq t\}$ with the stopping index $k(\delta)$ chosen by the discrepancy principle \eqref{eqn:discrepancy_2} for some $t\geq M$, the events $Q$ defined in \eqref{eqn:Q} and $Q_{p_0}$ defined in \eqref{eqn:Q_p}, which are crucial for deriving the convergence rates in expectation in Theorems \ref{thm:dp_E} and \ref{thm:regularizing} (ii).

\begin{lemma}\label{lem:Chebyshev}
Let Assumption \ref{ass}{\rm(i)} hold, $c_0<\overline{C_0}$, and the stopping index $k(\delta)$ be chosen according to the discrepancy principle \eqref{eqn:discrepancy_2} with $\tau>1$. Then for any $0<\tau'<\tau$ and $t\geq M$ such that $\|\E[r^\delta_{M\lfloor M^{-1}t\rfloor}]\|\leq \tau'\delta$, there holds, for any $t'\geq t$, that
\begin{align*}
\Prob(k(\delta)\leq t')\geq& 1-(\tau-\tau')^{-2}n (c^{*})^2\delta^{-2}\lfloor M^{-1}t'\rfloor^{-2}.
\end{align*}
Moreover, there holds
\begin{equation}\label{eqn:Ek}
\E[k(\delta)]\leq 2t+(\tau-\tau')^{-2}n M^2 (c^{*})^2\delta^{-2}t^{-1}.    
\end{equation}
\end{lemma}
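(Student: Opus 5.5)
The argument is a Chebyshev estimate for the fluctuation of the residual at a single anchor point. Set $K_t:=\lfloor M^{-1}t\rfloor$ and $K':=\lfloor M^{-1}t'\rfloor\geq K_t\geq 1$; then $K'M\geq M\lfloor M^{-1}t\rfloor$. Since $k(\delta)$ in \eqref{eqn:discrepancy_2} is $M$ times the first anchor index at which the discrepancy is met, we have the inclusion
\[
\{\|r^\delta_{K'M}\|\leq\tau\delta\}\subset\{k(\delta)\leq K'M\}\subset\{k(\delta)\leq t'\}.
\]
Hence it suffices to bound $\Prob(\|r^\delta_{K'M}\|>\tau\delta)$.

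By Lemma \ref{lem:mono_res} and the hypothesis, $\|\E[r^\delta_{K'M}]\|\leq\|\E[r^\delta_{M\lfloor M^{-1}t\rfloor}]\|\leq\tau'\delta$. The triangle inequality then gives
\[
\{\|r^\delta_{K'M}\|>\tau\delta\}\subset\{\|r^\delta_{K'M}-\E[r^\delta_{K'M}]\|>(\tau-\tau')\delta\}.
\]
Chebyshev's inequality combined with Theorem \ref{thm:res}(i), valid since $c_0<\overline{C_0}$, bounds the probability of the right-hand event by
\[
(\tau-\tau')^{-2}\delta^{-2}\,\E[\|r^\delta_{K'M}-\E[r^\delta_{K'M}]\|^2]\leq(\tau-\tau')^{-2}\delta^{-2}\,nM^2(c^*)^2(K'M)^{-2}=(\tau-\tau')^{-2}n(c^*)^2\delta^{-2}K'^{-2}.
\]
This is exactly the first claim.

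For \eqref{eqn:Ek}, write $\E[k(\delta)]=\sum_{K\geq0}M\,\Prob(k(\delta)>KM)$. The terms with $KM< t_0:=M\lceil M^{-1}t\rceil$ contribute at most $t_0\leq t+M\leq 2t$, using $t\geq M$. For $KM\geq t_0$, apply the first part with $t'=KM$, which gives $\Prob(k(\delta)>KM)\leq(\tau-\tau')^{-2}n(c^*)^2\delta^{-2}K^{-2}$. Summing with the tail bound $\sum_{K\geq K_0}K^{-2}\leq 2/K_0$, where $K_0=\lceil M^{-1}t\rceil\geq M^{-1}t$, yields a contribution of at most
\[
M(\tau-\tau')^{-2}n(c^*)^2\delta^{-2}\cdot 2Mt^{-1}.
\]
This is the second term of \eqref{eqn:Ek} up to the factor $2$.

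The main obstacle is this constant in the tail sum. To remove the factor $2$, I would start the splitting at $K_0+1$ and use the integral comparison $\sum_{K>K_0}K^{-2}\leq K_0^{-1}$, absorbing the single boundary term into the $2t$ part, which is possible because $t_0+M\leq 2t+M$. If the constants still do not match, the leftover factor would be absorbed into $c^*$, since $c^*$ is only an unspecified constant independent of $n$, $M$ and $\delta$.
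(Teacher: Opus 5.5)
Your proof of the probability bound is correct and is the paper's argument: Chebyshev for the centered residual at an anchor point, Theorem \ref{thm:res}(i), and Lemma \ref{lem:mono_res} to transfer the hypothesis $\|\E[r^\delta_{M\lfloor M^{-1}t\rfloor}]\|\leq\tau'\delta$ to $K'M$. You simply make explicit the step that the paper summarizes as ``by monotonicity the inequality holds also for any $t'\geq t$''.

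The gap is in \eqref{eqn:Ek}.

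\emph{The factor $2$.} Your block-wise tail estimate $M\sum_{K\geq K_0}K^{-2}\leq 2M/K_0$ produces the second term with an extra factor $2$.

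\emph{The repair fails in general.} Moving the block $K=K_0$ into the head makes the head bound $t_0+M=M\lceil M^{-1}t\rceil+M$. What you need is $t_0+M\leq 2t$. The inequality $t_0+M\leq 2t+M$ that you cite only gives $\E[k(\delta)]\leq 2t+M+\cdots$. In fact $t_0+M\leq 2t$ can fail: for $M=4$, $t=5$ one has $t_0+M=12>10=2t$. The deficit in the head is a fixed number, while the slack gained in the tail is proportional to $(\tau-\tau')^{-2}n(c^{*})^2\delta^{-2}$, so one cannot compensate the other. The repair does work when $t\in M\mathbb{N}$, since then $t_0=t$. That covers the applications, where $t=\overline{k}(\delta)$, but not the lemma as stated.

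\emph{How the paper avoids this.} It sums over all integers rather than over whole blocks:
\begin{equation*}
\E[k(\delta)]\leq (t+M)+\sum_{t'\geq t+M}\Prob(k(\delta)>t').
\end{equation*}
The head consists of $t+M\leq 2t$ terms, each at most $1$. For the tail, the bound $M\lfloor M^{-1}t'\rfloor\geq t'-M+1$ gives
\begin{equation*}
\sum_{t'\geq t+M}\Prob(k(\delta)>t')\leq (\tau-\tau')^{-2}nM^2(c^{*})^2\delta^{-2}\sum_{u\geq t+1}u^{-2}\leq (\tau-\tau')^{-2}nM^2(c^{*})^2\delta^{-2}t^{-1}.
\end{equation*}
In effect the boundary block is split between head and tail, which your atomic block sum cannot do.

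\emph{Your fallback.} Enlarging $c^{*}$ by a factor $\sqrt{2}$ is also legitimate. Theorem \ref{thm:res} only asserts the existence of $c^{*}$, and both claims of the lemma remain valid with the larger constant. If you use it, state it as the argument rather than as a contingency.
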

\begin{proof}
By the definition of the stopping index $k(\delta)$ in the discrepancy principle \eqref{eqn:discrepancy_2}, the event $\|r^\delta_{t_M}\|\leq \tau \delta$ (with $t_M:= M\lfloor M^{-1}t\rfloor\leq t$) implies 
\begin{align*} 
k(\delta)=M\min\{K\in \mathbb{N}:\;\|r^\delta_{KM}\|\leq \tau \delta\}\leq M\lfloor M^{-1}t\rfloor \leq t.
\end{align*}
Consequently,
\begin{align*}
\Prob\Big(k(\delta)\leq t\Big)\geq \Prob\Big(\|r^\delta_{t_M}\|\leq \tau \delta\Big).
\end{align*}
We decompose the residual $\|r^\delta_{t_M}\|$ by the triangle inequality as $$\|r^\delta_{t_M}\|\leq \|\E[r^\delta_{t_M}]\|+\|r^\delta_{t_M}-\E[r^\delta_{t_M}]\|.$$
Then, under the assumption $\|\E[r^\delta_{t_M}]\|\leq \tau'\delta$, we derive
\begin{align*}
\Prob\Big(\|r^\delta_{t_M}\|\leq \tau \delta\Big)
\geq&
\Prob\Big(\|r^\delta_{t_M}-\E[r^\delta_{t_M}]\|\leq (\tau-\tau') \delta\Big).
\end{align*}
Then combing the preceding estimates yields
\begin{align}\label{eqn:prob}
\Prob\Big(k(\delta)\leq t\Big)
\geq& \Prob\Big(\|r^\delta_{t_M}-\E[r^\delta_{t_M}]\|\leq (\tau-\tau') \delta\Big)
= 1-\Prob\Big(\|r^\delta_{t_M}-\E[r^\delta_{t_M}]\|> (\tau-\tau') \delta\Big).
\end{align}
By Chebyshev's inequality and Theorem \ref{thm:res}(i), we have 
\begin{align*}
&\Prob\Big(\|r^\delta_{t_M}-\E[r^\delta_{t_M}]\|> (\tau-\tau') \delta\Big)
\leq (\tau-\tau')^{-2}\delta^{-2}{\E[\|r^\delta_{t_M}-\E[r^\delta_{t_M}]\|^2]}\\
\leq &(\tau-\tau')^{-2}n M^2(c^{*})^2\delta^{-2}t_M^{-2}
= (\tau-\tau')^{-2}n (c^{*})^2\delta^{-2}\lfloor M^{-1}t\rfloor^{-2},
\end{align*}
which together with the estimate \eqref{eqn:prob} implies
\begin{align}
\Prob\Big(k(\delta)\leq t\Big)
&\geq 1-(\tau-\tau')^{-2}n (c^{*})^2\delta^{-2}\lfloor M^{-1}t\rfloor^{-2}\nonumber\\
&:=1-c_{\tau,\tau',n,M}\delta^{-2}M^{-2}\lfloor M^{-1}t\rfloor^{-2}.\label{eqn:P-kdelta}
\end{align}
Further, by the monotonicity of the expected residual $\|\E[r_{t_M}^\delta]\|$ in Lemma \ref{lem:mono_res}, the inequality \eqref{eqn:P-kdelta} holds also for any $t'\geq t$. Note that the stopping index $k(\delta)$ is measurable with respect to the filtration $\mathcal{F}$, $t\geq M$, and there holds
\begin{align*}
\Prob\Big(k(\delta)> t'\Big)=1-\Prob\Big((k(\delta)\leq t'\Big)
\leq c_{\tau,\tau',n,M}\delta^{-2}M^{-2}\lfloor M^{-1}t'\rfloor^{-2}. 
\end{align*}
Consequently, we derive
\begin{align*}
\E[k(\delta)]
\leq & \big(t+M\big)+\sum_{t'=t+M}^\infty \Prob\Big(k(\delta)> t'\Big)\\
\leq& 2t+c_{\tau,\tau',n,M}\delta^{-2}M^{-2}\sum_{t'=t+M}^\infty \lfloor M^{-1}t'\rfloor^{-2}\\
\leq& 2t+c_{\tau,\tau',n,M}\delta^{-2}\sum_{t'=t+M}^\infty (t'-M+1)^{-2}
\\
\leq& 2t+c_{\tau,\tau',n,M}\delta^{-2}t^{-1}.
\end{align*}
This shows the second estimate in the lemma, and completes the proof of the lemma. 
\end{proof}

\begin{prop}\label{prop:Q}
Let Assumption \ref{ass}{\rm(i)} hold, $c_0<\overline{C_0}$, and the event $Q$ be defined in \eqref{eqn:Q}. Then when $\delta\leq 1$, there hold $$\Prob(Q)\geq \tfrac12 \quad \mbox{and}\quad \lim_{\delta\to 0^+}\Prob(Q)=1.$$
\end{prop}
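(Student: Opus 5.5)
The plan is to bound the probability of the complement $Q^c$ with a union bound over $j\geq J:=\lceil\overline{k}(\delta)^{(1+2\nu)s}\rceil$, combined with Markov's inequality for the second moment. For each such $j$,
\begin{align*}
\Prob\big(\|B^{-\nu}N_j\Delta_j^\delta\|> j^{1-\frac1s}\rho\big)\leq \rho^{-2}j^{-2+\frac2s}\,\E\big[\|B^{-\nu}N_j\Delta_j^\delta\|^2\big].
\end{align*}

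The second moment is controlled by conditioning on $\mathcal{F}_j$ and applying Lemma \ref{lem:N} with $R=B^{-\nu}$. Since $\nu\in(0,\frac12]$, the operator $RB^{\frac12}=B^{\frac12-\nu}$ is bounded, with $\|B^{\frac12-\nu}\|\leq \|B\|^{\frac12-\nu}\leq (n^{-1}\|A\|^2)^{\frac12-\nu}$. Next, Lemma \ref{lem:Delta} (with $c_0<\overline{C_0}$) gives
\begin{align*}
\E[\|B^{-\nu}N_j\Delta_j^\delta\|^2]\leq \|A\|^{2-4\nu}n^{-(1-2\nu)}(c_1+c_2\delta^2)M^2 (j+M)^{-2}.
\end{align*}
Each term in the union bound is therefore at most
\begin{align*}
\rho^{-2}\|A\|^{2-4\nu}n^{-(1-2\nu)}(c_1+c_2\delta^2)M^2 j^{-4+\frac2s}.
\end{align*}
Because $s\geq1$, the exponent satisfies $4-\frac2s\geq 2>1$, so the series converges. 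Summing from $J$ gives
\begin{align*}
\sum_{j\geq J} j^{-4+\frac2s}\leq \frac{J^{-3+\frac2s}}{3-\frac2s-1}\cdot 2\lesssim \overline{k}(\delta)^{-(1+2\nu)(3s-2)}.
\end{align*}

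The remaining step is to substitute $\rho$ and the lower bound on $\overline{k}(\delta)$ from \eqref{eqn:def_bar_k}. Since $\delta\le 1$, we have $\overline{k}(\delta)\geq \big(\frac{2\sqrt n c_\nu\|w\|}{\tau-1}\big)^{\frac{2}{1+2\nu}}\delta^{-1}$. Hence $\overline{k}(\delta)^{-(1+2\nu)(3s-2)}\leq \big(\frac{\tau-1}{2\sqrt n c_\nu\|w\|}\big)^{2(3s-2)}\delta^{(1+2\nu)(3s-2)}$. By design, $\rho^2=2(c_1+c_2\delta^2)\|A\|^{2-4\nu}\big(\frac{\tau-1}{2c_\nu\|w\|}\big)^{2(3s-2)}n^{-(3s-1-2\nu)}M^2$. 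This cancels the factors $(c_1+c_2\delta^2)$, $\|A\|^{2-4\nu}$, $M^2$ and $\big(\frac{\tau-1}{2c_\nu\|w\|}\big)^{2(3s-2)}$.

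The main obstacle is the bookkeeping of the powers of $n$. After cancellation, what remains is $\frac12 n^{-(1-2\nu)}n^{3s-1-2\nu}n^{-(3s-2)}=\frac12$, times $\delta^{(1+2\nu)(3s-2)}$ and the harmless summation constant. I will need to check this exponent count carefully, and absorb the summation constant using $J\geq2$; if it does not fit, I will slightly enlarge the constant in $\rho$. The result is
\begin{align*}
\Prob(Q^c)\leq \tfrac12\delta^{(1+2\nu)(3s-2)}\leq \tfrac12\delta^{3s-2}.
\end{align*}
Consequently $\Prob(Q)\geq \frac12$ for $\delta\le1$ and $\Prob(Q)\to1$ as $\delta\to0^+$, since $3s-2\geq1$. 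This also matches the bound $1-\frac12\delta^{2(3s-2)}$ quoted in Theorem \ref{thm:dp_E}, using $(1+2\nu)\ge1$ wherever needed. The condition $s\geq\frac13(1+4\frac{\ln M}{\ln n})$ is not required for this probability estimate; it is used later, in Lemma \ref{lem:z_k}.
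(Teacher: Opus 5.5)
Your route is the paper's: Markov's inequality on the second moment, Lemma~\ref{lem:N} with $R=B^{-\nu}$ so that only $\|B^{\frac12-\nu}\|$ enters, Lemma~\ref{lem:Delta}, a union bound over $j\ge J=\lceil \overline{k}(\delta)^{(1+2\nu)s}\rceil$, and cancellation through the choice of $\rho$. Your bookkeeping of the powers of $n$ is correct, and the limit $\Prob(Q)\to1$ follows regardless of constants.

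The gap is in the summation constant, and here it is not harmless. The choice of $\rho$ produces exactly the factor $\tfrac12$, so the estimate only gives $\Prob(Q)\ge\tfrac12$ at $\delta=1$ if the sum is bounded by $\overline{k}(\delta)^{-(1+2\nu)(3s-2)}$ with constant exactly $1$. Replacing $(j+M)^{-2}$ by $j^{-2}$ loses this for $s$ near $1$. At $s=1$ one has $\sum_{j\ge J}j^{-2}>\int_J^\infty x^{-2}\,dx=J^{-1}$ for every $J$, so using $J\ge2$ cannot rescue it. (Your displayed denominator $3-\frac2s-1$ should be $3-\frac2s$; as written it vanishes at $s=1$.) Enlarging $\rho$ is not an option either, since $\rho$ is fixed in the definition \eqref{eqn:Q} of $Q$ and is reused in Lemma~\ref{lem:z_k}. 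The fix is simply not to discard $M\ge1$:
\begin{align*}
\sum_{j\ge J}j^{-2(1-\frac1s)}(j+M)^{-2}\le J^{-2(1-\frac1s)}(J+M-1)^{-1}\le J^{-(3-\frac2s)}\le \overline{k}(\delta)^{-(1+2\nu)(3s-2)}.
\end{align*}
This is what the paper does, and your cancellation then yields exactly $\tfrac12$.

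Separately, your closing claim that the result matches $1-\frac12\delta^{2(3s-2)}$ in Theorem~\ref{thm:dp_E} has the inequality backwards. For $\nu<\frac12$ and $\delta<1$ one has $\delta^{(1+2\nu)(3s-2)}>\delta^{2(3s-2)}$, so $1+2\nu\ge1$ does not help. This does not affect the proposition as stated. To get the theorem's exponent, use the full lower bound that \eqref{eqn:def_bar_k} gives for $\nu\in(0,\frac12)$, namely $\overline{k}(\delta)\ge\big(\frac{2\sqrt n c_\nu\|w\|}{(\tau-1)\delta}\big)^{\frac{2}{1+2\nu}}$; this is what the extra factor $(\delta^{-1})^{\frac{2}{1+2\nu}-1}$ is for. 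It gives $\overline{k}(\delta)^{-(1+2\nu)(3s-2)}\le\big(\frac{(\tau-1)\delta}{2\sqrt n c_\nu\|w\|}\big)^{2(3s-2)}$, as in the paper.
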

\begin{proof}
By the definition of the event $Q$, Markov's inequality, and Lemmas \ref{lem:N} and \ref{lem:Delta}, we have
\begin{align*}
\Prob\big(\|B^{-\nu}N_j\Delta_j^\delta\|\geq j^{1-\frac1s} \rho\big)\leq &j^{-2(1-\frac1s)}\rho^{-2}\E[\|B^{-\nu}N_j\Delta_j^\delta\|^2]\\
\leq& j^{-2(1-\frac1s)}\rho^{-2}\|B^{\frac12-\nu}\|^2\E[\|A\Delta_j^\delta\|^2]\\
\leq &\rho^{-2}(c_1+c_2\delta^2)\|A\|^{2-4\nu}n^{-(1-2\nu)}M^2(j+M)^{-2}j^{-2(1-\frac1s)},
\end{align*}
which implies, with $\widehat{k}(\delta):=\lceil\overline{k}(\delta)^{(1+2\nu)s}\rceil$, that 
\begin{align}
\Prob(Q)\geq& 1-
\sum_{j= \hat{k}(\delta)}^\infty \Prob\big(\|B^{-\nu}N_j\Delta_j^\delta\|\geq j^{1-\frac1s} \rho\big)\nonumber\\
\geq& 1-(c_1+c_2\delta^2)\|A\|^{2-4\nu}n^{-(1-2\nu)}\rho^{-2}M^2 \mathfrak{I},\label{eqn:prob-PQ}
\end{align}
with
\begin{align*}
\mathfrak{I}=\overline{k}(\delta)^{-2(1+2\nu)(s-1)}\sum_{j= \overline{k}(\delta)}^\infty (j+M)^{-2}\leq   \overline{k}(\delta)^{-(1+2\nu)(3s-2)}. 
\end{align*}
Then by the definition of the index $\overline{k}(\delta)$ and the condition $\delta\leq 1$, there holds
\begin{align*}
\mathfrak{I}\leq   \bigg(\frac{(\tau-1) \delta}{2\sqrt{n}c_\nu \|w\|}\bigg)^{2(3s-2)}.
\end{align*}
This estimate and \eqref{eqn:prob-PQ} then imply 
\begin{align*}
\Prob(Q)
\geq &1-(c_1+c_2\delta^2)\|A\|^{2-4\nu}n^{-(1-2\nu)}\rho^{-2}M^2 \bigg(\frac{(\tau-1) \delta}{2\sqrt{n}c_\nu \|w\|}\bigg)^{2(3s-2)}\geq 1-\frac12 \delta^{2(3s-2)}.
\end{align*}
Further, since $\delta\leq 1$, we have $\Prob(Q)\geq \frac12$ and $\lim_{\delta\to 0^+}\Prob(Q)=1$.
\end{proof}

\begin{prop}\label{prop:Q_p}
Let Assumption \ref{ass}{\rm(i)} hold, $c_0<\overline{C_0}$, and let the event $Q_{p_0}$ be defined in \eqref{eqn:Q_p} for any $p_0\in(0,1)$. Then when $\delta\leq 1$, there holds $$\Prob(Q_{p_0})\geq 1-p_0.$$
\end{prop}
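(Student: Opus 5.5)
The plan is to exhibit a single event $G$ of probability at least $1-p_0$ and show that $G\subset \widetilde{Q}\cup\widehat{Q}=Q_{p_0}$. Since $\widetilde{Q}$ is contained in $Q_{p_0}$ by definition, it suffices to show that every path $\theta\in G\cap\widetilde{Q}^c$ belongs to $\widehat{Q}$. That is, along such a path the errors $\|e^\delta_{k(\delta,\theta)}\|$ must be bounded uniformly for $\delta\in(0,\delta_0]$. Then $\Prob(Q_{p_0})\geq \Prob(G)\geq 1-p_0$.

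To build $G$, I use the decomposition in Lemma \ref{lem:bias-var}:
\begin{equation*}
\|e_k^\delta\|\leq \|P^ke_0^\delta\|+\Big\|n^{-1}c_0\sum_{j=0}^{k-1}P^jA^*\xi\Big\|+\|q_k^\delta\|.
\end{equation*}
The first term is at most $\|e_0^\delta\|$ for every $k$, since $\|P\|\leq 1$.

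For the stochastic term, $q_k^\delta$ is a martingale-type sum with orthogonal increments. Lemmas \ref{lem:N} and \ref{lem:Delta} (with $c_0<\overline{C_0}$) give $\E[\|q_k^\delta\|^2]\leq c_0^2\|B\|\sum_j\E[\|A\Delta_j^\delta\|^2]\leq C M^2$ uniformly in $k$ and in $\delta\leq1$; the summability comes from the factor $(j+M)^{-2}$. A cruder bound, obtained as in Lemma \ref{lem:q_k}, would also suffice. Doob's maximal inequality for the martingale $\sum_{j<k}c_0 N_j\Delta_j^\delta$, combined with $\|P\|\leq1$ and Lemma \ref{lem:q_k}, then yields
\begin{equation*}
\Prob\Big(\sup_k\|q_k^\delta\|>C_{p_0}\Big)\leq CM^2C_{p_0}^{-2}\leq \tfrac{p_0}{2}
\end{equation*}
for a suitable $C_{p_0}$.

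The noise term is the delicate one. By \eqref{eqn:noise0} it is bounded by $\sqrt{2c_0k(\delta)/n}\,\delta$, so I need $k(\delta)\delta^2$ to stay bounded along the path. For this I use Lemma \ref{lem:Chebyshev} together with the index $\overline{k}(\delta)$ from \eqref{eqn:bar_k_re}, which satisfies $\overline{k}(\delta)\delta^2\leq C$ and $\Prob(k(\delta)>\lambda\overline{k}(\delta))\lesssim \delta^{-2}\overline{k}(\delta)^{-2}\lambda^{-2}$. I first take $\delta$ along a dyadic sequence $\delta_m=2^{-m}$ and sum these probabilities, choosing $\lambda=\lambda_{p_0}$ large so that the union bound is at most $p_0/2$. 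Between consecutive dyadic levels I transfer the bound using the monotone dependence of the discrepancy criterion on $\delta$. For a fixed path, $\delta\mapsto k(\delta)$ is nonincreasing in $\delta$, so $k(\delta)\leq k(\delta_{m+1})$ for $\delta\in[\delta_{m+1},\delta_m]$. Consequently $k(\delta)\delta^2\leq 4\lambda_{p_0}\overline{k}(\delta_{m+1})\delta_{m+1}^2$, which is bounded. I define $G$ as the intersection of the maximal event and this dyadic event, so that $\Prob(G)\geq1-p_0$.

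On $G$, the three bounds combine into $\sup_{\delta\leq\delta_0}\|e^\delta_{k(\delta)}\|<C_{\max}$, and hence $G\cap\widetilde{Q}^c\subset\widehat{Q}$. I expect the main obstacle to be exactly this uniformity in $\delta$. Two points need care. First, the maximal inequality must be uniform in $\delta$, even though the iterates themselves depend on $\delta$; I would handle this by the same dyadic discretisation, allocating probability $p_0 2^{-m-2}$ to each level, and control intermediate $\delta$ through the bounds of Lemma \ref{lem:Delta}, whose constants are uniform in $\delta\leq1$. Second, the monotonicity of $k(\delta)$ in $\delta$ must be justified pathwise.
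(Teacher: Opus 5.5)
There is a genuine gap: the passage from the dyadic levels $\delta_m=2^{-m}$ to all $\delta\in(0,\delta_0]$ fails. It rests on $\delta\mapsto k(\delta,\theta)$ being nonincreasing along a fixed path. You flag this as needing care, but it is not a technicality, and nothing in the setting gives it.

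Noisy data at different levels are tied together only by $\|y^\delta-y^\dag\|=\delta$. Even for $y^\delta=y^\dag+\delta\xi$ with a fixed unit $\xi$, along a fixed path the anchor residual is affine in $\delta$, $r^\delta_{KM}=a_K+\delta b_K$. The set $\{\delta:\|a_K+\delta b_K\|\le\tau\delta\}$ is an up-set in $\delta$ (which is what would make the first-passage index monotone) when $\|b_K\|\le\tau$. For Landweber with step $c\le\|A\|^{-2}$ this holds, since $b_K=-(I-cAA^*)^K\xi$ has norm at most $1<\tau$. For SVRG it holds only in expectation (cf. \eqref{eqn:1delta} and Lemma \ref{lem:mono_res}). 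Pathwise, the noise-driven part of the fluctuation $Aq^\delta_{KM}$ enters $b_K$ and can push $\|b_K\|$ above $\tau$. Then the test at epoch $K$ is passed only on a bounded interval of $\delta$, and $k(\delta)$ can increase with $\delta$. So your dyadic event does not control $k(\delta)\delta^2$ at intermediate noise levels.

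The $q$-term has the same problem. Lemma \ref{lem:Delta} bounds $\E[\|A\Delta^\delta_j\|^2]$ at each fixed $\delta$, which says nothing about $\sup_{\delta\le\delta_0}\|q^\delta_{k(\delta)}\|$ over an uncountable family along one path. That would need a pathwise link between noise levels, e.g. the affine dependence of $x^\delta_k$ on $y^\delta$ together with an operator-norm bound on the noise-driven fluctuation. None of the paper's lemmas supplies this. Hence $G$ only controls countably many $\delta$, and $\sup_{\delta\le\delta_0}\|e^\delta_{k(\delta)}\|<C_{\max}$ does not follow.

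The missing idea is to use $\theta\in\widetilde{Q}^c$, which your argument never invokes. Trying to bound the error on all paths is exactly what forces uniformity in $\delta$. The paper reads $\theta\notin\widetilde{Q}$ as $\lim_{\delta\to0^+}k(\delta)<\infty$, so $k(\delta)\le\delta^{-1}$ for $\delta<\delta_0$. Then the first three terms in \eqref{eqn:ek-upper}, namely $\epsilon_0+(2c_0k(\delta))^{-1/2}\|w_{\epsilon_0}\|+\sqrt{2c_0n^{-1}k(\delta)}\,\delta$, are bounded with no probabilistic input at all: no Lemma \ref{lem:Chebyshev}, no dyadic sequence, no bound on $k(\delta)\delta^2$. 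Only $\|q^\delta_{k(\delta)}\|$ is treated probabilistically. If $k(\delta)\le p_0^{-1}$, finitely many SVRG steps (each an affine map with path-independent bounds) give a deterministic bound $C_m$. For $k\ge p_0^{-1}+1$, Chebyshev and Lemma \ref{lem:q_k_E} give $\Prob(\|q_k^\delta\|\ge\sqrt{c_q}M)\le(k-1)^{-1}\le p_0$ with $c_q=c_0(c_1+c_2)(c_0\|B\|+2)$.

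A smaller point: $q^\delta_k$ is not a martingale in $k$, because the weights $P^{k-1-j}$ change with $k$, so Doob does not apply directly. The clean fixed-$\delta$ maximal bound is Lemma \ref{lem:q_k} with $q^\delta_1=0$. It gives $\sup_k\|q^\delta_k\|^2\le\frac{nc_0}{2-c_0\|B\|}\sum_{j\ge1}\|A\Delta^\delta_j\|^2$ pathwise, with mean $O(M)$ by Lemma \ref{lem:Delta}.
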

\begin{proof}
The decomposition in Lemma \ref{lem:bias-var}, the triangle inequality, Corollary \ref{cor:res} and the inequality \eqref{eqn:noise0} yield
\begin{align}
\|e_{k(\delta)}^\delta\|
\leq& \|P^{k(\delta)}e_0^\delta\|+\bigg\|n^{-1}c_0\sum_{j=0}^{k(\delta)-1}P^j A^* \xi\bigg\|+\|q_{k(\delta)}^\delta\|\nonumber\\
\leq& \epsilon_0+\big(2c_0 k(\delta)\big)^{-\frac12}\|w_{\epsilon_0}\|+\sqrt{2c_0 n^{-1} k(\delta)} \delta+\|q_{k(\delta)}^\delta\|.\label{eqn:ek-upper}
\end{align}
There exists some $\delta_0\in(0,1)$ such that for any $\delta<\delta_0$ and any path such that $\lim_{\delta\to 0^+}k(\delta)<\infty$, there holds $k(\delta)\leq \delta^{-1}$, and hence the first three terms in the upper bound \eqref{eqn:ek-upper} of $\|e^\delta_{k(\delta)}\|$ are uniformly bounded when $\delta<\delta_0$.
For any $p_0\in(0,1)$, there exists some $C_m>0$ such that for any $\delta\in(0,1]$ and $k(\delta)\leq p_0^{-1}$, there holds $\|e^\delta_{k(\delta)}\|<C_m$. 
When $k\geq p_0^{-1}+1$, by Chebyshev's inequality and Lemma \ref{lem:q_k_E} below, we deduce
\begin{align*}
\Prob\big(\|q_k^\delta\|\geq \sqrt{c_q} M\big)\leq c_q^{-1} M^{-2}\E[\|q_k^\delta\|^2]
\leq (k-1)^{-1} \leq  p_0,  
\end{align*}
with the constant $c_q=c_0(c_1+c_2) (c_0\|B\|+2)$. 
Thus, when $\lim_{\delta\to 0^+}k(\delta)<\infty$, $\|e^\delta_{k(\delta)}\|$ is not uniformly bounded with probability at most $p_0$, i.e., $\Prob(Q_{p_0})\geq 1-p_0$.
\end{proof}

\section{Technical estimates for the proofs of Theorems \ref{thm:dp_E} and \ref{thm:regularizing}}

The next lemma provides an upper bound of the conditional expectation $\E[\|B^{-\nu}z_{k(\delta)}^\delta\|^2|Q]^\frac12$ with $z_{k(\delta)}^\delta$ defined in \eqref{eqn:decom_dp}, which is crucial for deriving convergence rates in Theorem \ref{thm:dp_E}.
\begin{lemma}\label{lem:z_k}
Let $z_{k(\delta)}^\delta$ be defined in \eqref{eqn:decom_dp}. Then under the conditions in Theorem \ref{thm:dp_E}, there holds
\begin{align*}
\E[\|B^{-\nu}z_{k(\delta)}^\delta\|^2|Q]^\frac12\leq \bar{C}_{w,\nu,\tau} \max\big(1,\tfrac{\tau-1}{2c_\nu \|w\|}\big)^{3s}s n^\nu M\ln\big(\delta^{-1}+M\big),
\end{align*}
where the constant $\bar{C}_{w,\nu,\tau}$ is independent of $s$, $n$, $M$ and $\delta$.
\end{lemma}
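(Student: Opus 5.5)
We bound $\|B^{-\nu}z_{k(\delta)}^\delta\|$ pathwise by splitting $z_{k(\delta)}^\delta=P^{k(\delta)}e_0^\delta+q_{k(\delta)}^\delta$. The bias part is easy: Assumption \ref{ass}(ii) gives $\|B^{-\nu}P^{k(\delta)}e_0^\delta\|=\|P^{k(\delta)}w\|\leq\|w\|$. For the stochastic part we write
\begin{align*}
B^{-\nu}q_{k(\delta)}^\delta=c_0\sum_{j=1}^{k(\delta)-1}P^{k(\delta)-1-j}B^{-\nu}N_j\Delta_j^\delta,
\end{align*}
using that $B^{-\nu}$ commutes with $P$. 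We then split the sum at the threshold $\widehat{k}(\delta)=\lceil\overline{k}(\delta)^{(1+2\nu)s}\rceil$ appearing in the definition \eqref{eqn:Q} of $Q$.

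\textbf{Early indices $j<\widehat{k}(\delta)$.} We use Lemma \ref{lem:N} with $R=P^{k(\delta)-1-j}B^{-\nu}$. Since $\nu\le\frac12$, this gives $\|RB^{1/2}\|=\|P^{k(\delta)-1-j}B^{\frac12-\nu}\|$. This is bounded by $\|B\|^{\frac12-\nu}$ for the top index, and by Lemma \ref{lem:kernel} by $(\tfrac12-\nu)^{\frac12-\nu}c_0^{-(\frac12-\nu)}(k(\delta)-1-j)^{-(\frac12-\nu)}$ otherwise. Lemma \ref{lem:Delta} (used in mean square, or pathwise if we are content with the $\E$-version under Jensen) gives $\E[\|A\Delta_j^\delta\|^2]^{1/2}\lesssim M(j+M)^{-1}$.

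The main difficulty is that $k(\delta)$ is random. I would dominate the sum by a deterministic majorant: I would drop the decaying kernel factor (bounded by a constant) and sum $\sum_{j<\widehat{k}(\delta)}\sqrt{n}\,\|A\Delta_j^\delta\|$. By Minkowski and Lemma \ref{lem:Delta}, the conditional $L^2$ norm of this sum is at most $\Prob(Q)^{-1/2}\sqrt{n}\,\|B^{\frac12-\nu}\|\,(c_1+c_2)^{1/2}M\sum_{j<\widehat{k}(\delta)}(j+M)^{-1}$, and $\Prob(Q)\ge\frac12$ by Proposition \ref{prop:Q}. The sum is $\lesssim\ln(\widehat{k}(\delta)+M)$. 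Since $\widehat{k}(\delta)$ is a power $(1+2\nu)s\cdot\frac{2}{1+2\nu}$ of $\delta^{-1}$ (up to logs and constants from \eqref{eqn:def_bar_k}), this is $\lesssim s\ln(\delta^{-1}+M)$ times constants. Combining with $\|B^{\frac12-\nu}\|\le\|A\|^{1-2\nu}n^{-(\frac12-\nu)}$ produces the factor $sn^{\nu}M\ln(\delta^{-1}+M)$.

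\textbf{Late indices $j\ge\widehat{k}(\delta)$.} On $Q$ we have the pathwise bound $\|B^{-\nu}N_j\Delta_j^\delta\|\le j^{1-1/s}\rho$. Combined with $\|P^{k-1-j}\|\le1$ this would grow, so instead I would interpolate. I would use the pathwise bound $\|B^{-\nu}N_j\Delta_j^\delta\|\le\sqrt{n}\|B^{\frac12-\nu}\|\|A\Delta_j^\delta\|$, which is $\lesssim n^{\nu}M(j+M)^{-1}$ in mean square, and absorb the $Q$-bound, whose $\rho$ contains the factor $(\frac{\tau-1}{2c_\nu\|w\|})^{3s-2}n^{-(3s-1-2\nu)/2}M$. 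Explicitly, I would write $\|X_j\|\le \|X_j\|^{1/2}(j^{1-1/s}\rho)^{1/2}$ on $Q$ and sum $j^{(1-1/s)/2}\rho^{1/2}\cdot n^{\nu/2}M^{1/2}(j+M)^{-1/2}\cdot(\ldots)$. If the sum does not close, I would instead keep the kernel $\|P^{k-1-j}\|$ and exploit $k(\delta)\le\overline{k}(\delta)$ on the likely event, noting $\widehat{k}(\delta)\ge\overline{k}(\delta)$. On $\{k(\delta)\le\overline{k}(\delta)\}$ the late range is then empty, so what remains is the complementary event, whose probability is controlled via \eqref{eqn:Ek} and Lemma \ref{lem:Chebyshev}. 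Together with $\rho$, this produces the factor $\max(1,\frac{\tau-1}{2c_\nu\|w\|})^{3s}$. I expect this tail handling, together with the randomness of $k(\delta)$, to be the main obstacle.

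Finally, adding $\|w\|$ and both pieces, and collecting all constants depending on $w,\nu,\tau$ into $\overline{C}_{w,\nu,\tau}$, yields the stated bound.
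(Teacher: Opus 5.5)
Your bias term and your early-index estimate ($j<\widehat{k}(\delta)$: drop $\|P^{k(\delta)-1-j}\|\le 1$, Minkowski, $\Prob(Q)\ge\frac12$, Lemma \ref{lem:Delta}, harmonic sum) match the paper's proof. The late range $\widehat{k}(\delta)\le j<k(\delta)$, which you flag yourself, is a genuine gap, and neither of your options closes it.

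\textbf{Why your two options fail.} The interpolation leaves terms of order $j^{-\frac{1}{2s}}$, which are not summable for $s\ge 1$. For the fallback, it is not enough that $\{k(\delta)>\overline{k}(\delta)\}$ has small probability. On that event the only bound you have is, on $Q$, $\sum_{j=\widehat{k}(\delta)}^{k(\delta)-1}\|B^{-\nu}N_j\Delta_j^\delta\|\le \rho\, k(\delta)^{2-\frac1s}$. Its conditional second moment needs $\E[k(\delta)^{4-\frac2s}\chi_{\{k(\delta)>\widehat{k}(\delta)\}}]$. Lemma \ref{lem:Chebyshev} only gives the tail $\Prob(k(\delta)>t)\lesssim n\delta^{-2}M^2t^{-2}$, which does not even bound $\E[k(\delta)^2]$; indeed \eqref{eqn:Ek} controls only the first moment.

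\textbf{The paper's route does not help either.} Splitting index by index with Minkowski, as the paper does for its late-range term $\mathfrak{I}$, gives
\[
\E[\chi_{\{k(\delta)>j\}}\|B^{-\nu}N_j\Delta_j^\delta\|^2\,|\,Q]^{\frac12}\le \Prob(k(\delta)>j\,|\,Q)^{\frac12}\,j^{1-\frac1s}\rho\lesssim \sqrt{n}\,\rho\,\delta^{-1}M\,j^{-\frac1s},
\]
which is again not summable. The paper reaches a convergent series only because it writes $\Prob(k(\delta)>j\,|\,Q)$ where the square root belongs. So copying that step will not repair your argument.

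\textbf{The missing idea.} $k(\delta)$ is a stopping time for $(\mathcal{F}_k)$, since $\{k(\delta)\le j\}$ depends only on the $x_{KM}^\delta$ with $KM\le j$. This means you need no tail bound on $k(\delta)$ at all. Put $u_k=B^{-\nu}q_k^\delta$ and $X_k=B^{-\nu}N_k\Delta_k^\delta$. Then $u_1=0$, $u_{k+1}=Pu_k+c_0X_k$, $\|P\|\le 1$ and $\E[X_k|\mathcal{F}_k]=0$, so
\[
\|u_{k+1}\|^2\le \|u_k\|^2+2c_0(Pu_k,X_k)+c_0^2\|X_k\|^2 .
\]
Sum over $j<\min(k(\delta),K)$. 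The cross terms have zero mean because $\{j<k(\delta)\}\in\mathcal{F}_j$. Letting $K\to\infty$ (Fatou) and using Lemmas \ref{lem:N} and \ref{lem:Delta},
\[
\E[\|B^{-\nu}q_{k(\delta)}^\delta\|^2]\le c_0^2\sum_{j\ge 1}\E[\|X_j\|^2]\le c_0^2\|B\|^{1-2\nu}(c_1+c_2)M^2\sum_{j\ge1}(j+M)^{-2}\le c_0^2\|B\|^{1-2\nu}(c_1+c_2)M .
\]
Combine this with $\E[\,\cdot\,|Q]\le 2\E[\,\cdot\,]$ and the bias bound $\|w\|$. This gives the lemma, in fact a sharper bound without the factors $s$, $\max(\cdot)^{3s}$ or the logarithm. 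It also makes the split at $\widehat{k}(\delta)$ and the pathwise bound defining $Q$ unnecessary here.

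\textbf{A smaller point in your early-range estimate.} If you keep it, use the conditional mean-square form of Lemma \ref{lem:N}. Its factor $\|B^{\frac12-\nu}\|\lesssim n^{-(\frac12-\nu)}$ absorbs the $\ln n$ contained in $\ln\widehat{k}(\delta)$, since $\overline{k}(\delta)\propto n^{\frac{1}{1+2\nu}}$. Your pathwise factor $\sqrt{n}\|B^{\frac12-\nu}\|\lesssim n^{\nu}$ leaves an extra $\ln n$, so $\overline{C}_{w,\nu,\tau}$ would depend on $n$.
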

\begin{proof}
The decomposition \eqref{eqn:decom_dp}, the triangle inequality, Assumption \ref{ass}(ii) and Lemma \ref{lem:bias-var} together yield
\begin{align}
\E[\|B^{-\nu}z_{k(\delta)}^\delta\|^2|Q]^\frac12
\leq&  \E[\|B^{-\nu}P^{k(\delta)}e_0^\delta\|^2|Q]^\frac12+\E[\|B^{-\nu} q_{k(\delta)}^\delta\|^2|Q]^\frac12\nonumber\\
\leq& \E[\|P^{k(\delta)}w\|^2|Q]^\frac12+\E[\|B^{-\nu} q_{k(\delta)}^\delta\|^2|Q]^\frac12\nonumber\\
\leq &\|w\|+\E[\|B^{-\nu} q_{k(\delta)}^\delta\|^2|Q]^\frac12.\label{eqn:Bz}
\end{align}
Then, by the triangle inequality and the inequalities $\|P^{k(\delta)-1-j}\|\leq 1$ and $\Prob(Q)\geq \frac12$ from Proposition \ref{prop:Q}, we have
\begin{align*}
\E[\|B^{-\nu} q_{k(\delta)}^\delta\|^2|Q]^\frac12
\leq& c_0\E\Big[\Big(\sum_{j=1}^{k(\delta)-1}\|B^{-\nu} P^{k(\delta)-1-j} N_j\Delta_j^\delta\|\Big)^2\Big|Q\Big]^\frac12\\
\leq &c_0\E\Big[\Big(\sum_{j=1}^{k(\delta)-1}\|B^{-\nu}N_j\Delta_j^\delta\|\Big)^2\Big|Q\Big]^\frac12\\
\leq&c_0\Prob(Q)^{-1}\sum_{j=1}^{\widehat{k}(\delta)+M-1}\E[\|B^{-\nu}N_j\Delta_j^\delta\|^2]^\frac12+\mathfrak{I}\\
\leq &2c_0\sum_{j=1}^{2\widehat{k}(\delta)-1}\E[\|B^{-\nu}N_j\Delta_j^\delta\|^2]^\frac12+\mathfrak{I},
\end{align*}
with $\widehat{k}(\delta):=\lceil\overline{k}(\delta)^{(1+2\nu)s}\rceil$ (recall that $\overline{k}(\delta)$ is defined in \eqref{eqn:def_bar_k}) satisfying $\widehat{k}(\delta)+M\leq 2\widehat{k}(\delta)$ and 
\begin{align*}
\mathfrak{I}=&c_0\sum_{j=\widehat{k}(\delta)+M}^{\infty}\E[\chi_{\{k(\delta)> j\}}\|B^{-\nu}N_{j}\Delta_{j}^\delta \|^2|Q]^\frac12,
\end{align*}
with $\chi_{\{k(\delta)> j\}}$ being the indicator function of the set of paths such that $k(\delta)> j$. 
By the definition of the event $Q$ in \eqref{eqn:Q} and Lemma \ref{lem:Chebyshev}, we derive
\begin{align*}
\mathfrak{I}
\leq&c_0\sum_{j=\widehat{k}(\delta)+M}^\infty \Prob\big(k(\delta)> j\;|\; Q\big) j^{1-\frac1s}\rho\\
\leq& c_0\rho\Prob(Q)^{-1}\sum_{j=\widehat{k}(\delta)+M}^\infty \Prob\big(k(\delta)> j\big) j^{1-\frac1s}\\
\leq&2c_0\rho c_{\tau,n,M}\delta^{-2}M^{-2}\sum_{j=\widehat{k}(\delta)+M}^\infty j^{-(\frac1s-1)}\lfloor M^{-1}j\rfloor^{-2},
\end{align*}
with the constant $c_{\tau,n,M}=4(\tau-1)^{-2}n M^2(c^{*})^2$.
Direct computation gives
\begin{align*}
    M^{-2}\sum_{j=\widehat{k}(\delta)+M}^\infty j^{-(\frac1s-1)}\lfloor M^{-1}j\rfloor^{-2}\leq \sum_{j=\widehat{k}(\delta)+M}^\infty (j-M+1)^{-(\frac1s+1)}\le s\widehat{k}(\delta)^{-\frac1s} \leq s\overline{k}(\delta)^{-(1+2\nu)}.
\end{align*}\
Consequently, we arrive at
\begin{align*}
    \mathfrak{I} \leq 2c_0\rho c_{\tau,n,M}s\delta^{-2}\overline{k}(\delta)^{-(1+2\nu)}.
\end{align*}
Further, by Lemma \ref{lem:N}, there holds
\begin{align*}
\E[\|B^{-\nu} q_{k(\delta)}^\delta\|^2|Q]^\frac12
\leq&2c_0 \Big(\|B^{\frac12-\nu}\|\sum_{j=1}^{2\widehat{k}(\delta)-1}\E[\|A\Delta_j^\delta\|^2]^\frac12+\rho c_{\tau,n,M}s\delta^{-2}\overline{k}(\delta)^{-(1+2\nu)}\Big).
\end{align*}
Hence, from Lemma \ref{lem:Delta} (when $c_0<\overline{C_0}$) and the trivial inequality $(j+M)^{-1}\leq(j+1)^{-1}$, we deduce
\begin{align}
&\E[\|B^{-\nu} q_{k(\delta)}^\delta\|^2|Q]^\frac12\nonumber\\
\leq& 2c_0 \bigg(\sqrt{c_1+c_2}\|A\|^{1-2\nu}n^{-(\frac12-\nu)}M\sum_{j=1}^{2\widehat{k}(\delta)-1}(j+1)^{-1}+ \rho c_{\tau,n,M}s\delta^{-2}\overline{k}(\delta)^{-(1+2\nu)}\bigg)\nonumber\\
\leq& 2c_0 \Big(2\sqrt{c_1+c_2}\|A\|^{1-2\nu}n^{-(\frac12-\nu)} M (1+2\nu)s\ln \big(\overline{k}(\delta)\big)+ \rho c_{\tau,n,M} s\delta^{-2}\overline{k}(\delta)^{-(1+2\nu)}\Big).\label{eqn:I0}
\end{align}
Next we bound the factors $\ln \big(\overline{k}(\delta)\big)$ and $\delta^{-2}\overline{k}(\delta)^{-(1+2\nu)}$ separately. 
Let $$c_{w,\nu,\tau}:=\max\bigg(e,\Big(\frac{2c_\nu\|w\|}{\tau-1}\Big)^{\frac{2}{1+2\nu}}\bigg).$$ 
Then with $\overline{k}(\delta)$ defined in \eqref{eqn:def_bar_k} and the assumption $\delta\leq1$, there hold
\begin{align}
\overline{k}(\delta)\leq&  \bigg(\frac{2\sqrt{n}c_\nu\|w\|}{\tau-1}\bigg)^{\frac{2}{1+2\nu}}\delta^{-1}\big(\delta^{-1}\big)^{\max(\frac{2}{1+2\nu}-1,0)}+M+1\nonumber\\
\leq& c_{w,\nu,\tau} n^{\frac{1}{1+2\nu}}\delta^{-1}\big(\delta^{-1}\big)^{\max(\frac{2}{1+2\nu}-1,0)}+M+1\nonumber\\
\leq& c_{w,\nu,\tau} \Big(n^{\frac{1}{1+2\nu}}\delta^{-1}\big(\delta^{-1}\big)^{\max(\frac{2}{1+2\nu}-1,0)}+ M\Big)\label{eqn:bar_k_bd}\\
\leq& c_{w,\nu,\tau} n^{\frac{1}{1+2\nu}}\big(\delta^{-1}+M\big)\big(\delta^{-1}\big)^{\max(\frac{2}{1+2\nu}-1,0)}\nonumber\\
\leq& c_{w,\nu,\tau} n^{\frac{1}{1+2\nu}}\big(\delta^{-1}+M\big)^2.\nonumber
\end{align}
Then, by the inequality $\ln (ab)\leq 2\ln a\ln b$ for any $a,b\geq e$, we derive
\begin{align}
\ln \big(\overline{k}(\delta)\big)
\leq& \ln\Big(c_{w,\nu,\tau} n^{\frac{1}{1+2\nu}}\big(\delta^{-1}+M\big)^2\Big)
\leq 2\ln\big(c_{w,\nu,\tau} n^{\frac{1}{1+2\nu}}\big)\ln\big(\delta^{-1}+M\big)^2\nonumber\\
\leq& 4\ln\big(c_{w,\nu,\tau} e n^{\frac{1}{1+2\nu}}\big)\ln\big(\delta^{-1}+M\big)
\leq 8\ln c_{w,\nu,\tau} \ln \big(e n^{\frac{1}{1+2\nu}}\big)\ln\big(\delta^{-1}+M\big).\label{eqn:ln_bark}
\end{align}
Meanwhile, the condition $\overline{k}(\delta)\geq \big(\frac{2\sqrt{n}c_\nu \|w\|}{(\tau-1) \delta}\big)^{\frac{2}{1+2\nu}}$ implies
\begin{align}\label{eqn:bark-2}
\delta^{-2}\overline{k}(\delta)^{-(1+2\nu)}\leq \delta^{-2}\bigg(\frac{2\sqrt{n}c_\nu \|w\|}{(\tau-1) \delta}\bigg)^{-2}
= \bigg(\frac{\tau-1}{2\sqrt{n}c_\nu \|w\|}\bigg)^2.
\end{align}
Consequently, with  the estimates \eqref{eqn:Bz}, \eqref{eqn:I0}, \eqref{eqn:ln_bark}, the definitions $c_{\tau,n,M}=4(\tau-1)^{-2}n M^2(c^{*})^2$ and $\rho=\sqrt{2(c_1+c_2\delta^2)}\|A\|^{1-2\nu}\left(\frac{\tau-1}{2c_\nu \|w\|}\right)^{3s-2}n^{-\frac{3s-1-2\nu}{2}}M$, and the assumptions $\tau\in(1,2]$ and $s\geq 1$, there holds
\begin{align}
&{\E[\|B^{-\nu}z_{k(\delta)}^\delta\|^2|Q]^\frac12}\nonumber \\
{\leq}& {2^5 c_0\sqrt{c_1+c_2}\ln c_{w,\nu,\tau}(1+2\nu)\|A\|^{1-2\nu}s n^{-(\frac12-\nu)}M\ln \big(e n^{\frac{1}{1+2\nu}}\big)\ln\big(\delta^{-1}+M\big)}\nonumber\\
&{+2^{\frac72}c_0 \sqrt{c_1+c_2}(c^{*})^2 (\tau-1)^{-2}\big(\tfrac{\tau-1}{2c_\nu \|w\|}\big)^{3s}\|A\|^{1-2\nu}s n^{-\frac{3s-1-2\nu}{2}}M^3+\|w\|}\nonumber \\
{\leq}&{\overline{C}_{w,\nu,\tau}s\max\Big(1,\tfrac12 n^{-(\frac12-\nu)}\ln \big(e n^{\frac{1}{1+2\nu}}\big), \big(\tfrac{\tau-1}{2c_\nu \|w\|}\big)^{3s}n^{-\frac{3s-1-2\nu}{2}}M^2\Big)M\ln\big(\delta^{-1}+M\big)},\label{eqn:Bz_k}
\end{align}
where the constant  $\overline{C}_{w,\nu,\tau}$ is independent of $s$, $n$, $M$ and $\delta$. 
Further, by the elementary inequality 
\begin{equation}\label{eqn:ln}
\hat{r}^{-r}\ln \hat{r}\leq (er)^{-1}, \quad \forall \hat{r},r > 0,
\end{equation}
we can bound the constant $\tfrac12 n^{-(\frac12-\nu)}\ln \big(e n^{\frac{1}{1+2\nu}}\big)$ in \eqref{eqn:Bz_k} by \begin{align*}
\tfrac12 n^{-(\frac12-\nu)}\ln \big(e n^{\frac{1}{1+2\nu}}\big)=&
\tfrac12 n^\nu e^{\frac{1+2\nu}{2}}(e n^{\frac{1}{1+2\nu}})^{-\frac{1+2\nu}{2}}\ln \big(e n^{\frac{1}{1+2\nu}}\big)
\leq \tfrac12 n^\nu e^{\frac{1+2\nu}{2}}(e\tfrac{1+2\nu}{2})^{-1}\leq n^\nu.   
\end{align*}
This estimate and the assumption $s\geq \max\big(1,\frac13(1+4\ln M/\ln n)\big)$ imply 
\begin{align*}
&\max\Big(1,\tfrac12 n^{-(\frac12-\nu)}\ln \big(e n^{\frac{1}{1+2\nu}}\big),\big(\tfrac{\tau-1}{2c_\nu \|w\|}\big)^{3s}n^{-\frac{3s-1-2\nu}{2}}M^2\Big)\\
\leq& n^\nu\max\big(1,\big(\tfrac{\tau-1}{2c_\nu \|w\|}\big)^{3s}n^{-\frac{3s-1}{2}}M^2\big)
\leq n^\nu\max\big(1,\tfrac{\tau-1}{2c_\nu \|w\|}\big)^{3s}.    
\end{align*}
This and \eqref{eqn:Bz_k} complete the proof of the lemma.
\end{proof}

The next lemma collects some basic estimates, which are essential for the proof of Theorems \ref{thm:dp},  \ref{thm:regularizing} and \ref{thm:res}. Their derivations involve routine but rather lengthy and tedious computations. 

\begin{lemma}\label{lem:sums}
For any $k\geq 1$, with the convention $\sum_{j=1}^{0} R_j=0$ for any sequence $\{R_j\}_j$, there hold
\begin{align}
{(k+M)^{-1}+\sum_{j=1}^{k-1}(k-j)^{-1}(j+M)^{-1}\leq}& {6 (k+1)^{-1}\ln (k+1)},\label{eqn:sum_1_1}\\
(k+M)^{-2}+\sum_{j=1}^{k-1}(k-j)^{-2}(j+M)^{-2}\leq& 8(k+1)^{-2}\label{eqn:sum_2_2},
\end{align}
and for any $k\geq 3$, there hold
\begin{align}
{\sum_{j=1}^{k-1}(k-j)^{-(\frac12-\nu)} (j+M)^{-1}
\leq} & {9(1-2\nu)^{-1}k^{-\frac{1-2\nu}{4}}},\label{eqn:sum_nu_1}\\
\sum_{j=1}^{k-1}  \big(k-j\big)^{-1}(j+M)^{-2}\leq 4 k^{-1}.\label{eqn:sum_1_2}
\end{align}
\end{lemma}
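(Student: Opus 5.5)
Each of the four estimates in Lemma \ref{lem:sums} comes from splitting the discrete convolution at $j\approx k/2$. On each half, one factor is frozen at its size near the other end, and the remaining one-dimensional sum is bounded by an integral comparison. I will throw away $M$ wherever it helps, via $(j+M)^{-1}\le (j+1)^{-1}$, and keep it only in the standalone terms $(k+M)^{-1}$ and $(k+M)^{-2}$, which are bounded by $(k+1)^{-1}$ and $(k+1)^{-2}$. The case $k=1$ is trivial, since the sums are empty. For $k\ge 2$, split the sums into $S_1=\sum_{1\le j\le k/2}$ and $S_2=\sum_{k/2<j\le k-1}$.

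For \eqref{eqn:sum_1_1}, use $k-j\ge k/2$ on $S_1$ to get $S_1\le 2k^{-1}\sum_{j\le k/2}(j+1)^{-1}\le 2k^{-1}\ln(k+1)$. On $S_2$ use $j+1\ge k/2$ to get $S_2\le 2k^{-1}\sum_{i=1}^{k/2}i^{-1}\le 2k^{-1}(1+\ln k)$. Since $k\ge 2$, we have $1\le \ln(k+1)/\ln 3$, and $k^{-1}\le 2(k+1)^{-1}$; collecting terms gives a constant below $6$ once the bookkeeping is done.

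For \eqref{eqn:sum_2_2}, the same split gives $S_1\le 4k^{-2}\sum_j (j+1)^{-2}\le 4k^{-2}(\pi^2/6-1)$. Symmetrically, $S_2\le 4k^{-2}\sum_i i^{-2}\le 4k^{-2}\pi^2/6$. Converting $k^{-2}\le (9/4)(k+1)^{-2}$ for $k\ge 2$ and adding $(k+M)^{-2}\le (k+1)^{-2}$ should fit under $8(k+1)^{-2}$, but the constants are tight. \textbf{This is the main obstacle.} If the crude split overshoots, I would refine it by treating $j=k-1$ and $j=1$ separately, or by checking $k=2,3$ directly and using $k^{-2}\le(16/9)(k+1)^{-2}$ for $k\ge3$.

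For \eqref{eqn:sum_1_2} with $k\ge3$: on $S_1$, $(k-j)^{-1}\le 2/k$ and $\sum_j (j+1)^{-2}\le 1$ give at most $2k^{-1}$. On $S_2$, $(j+1)^{-2}\le 4k^{-2}$ and $\sum_i i^{-1}\le 1+\ln k$ give $4k^{-2}(1+\ln k)\le 2k^{-1}$, using $(1+\ln k)/k\le 1/2$ for $k\ge 3$ (up to checking small $k$ by hand). The total is at most $4k^{-1}$.

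For \eqref{eqn:sum_nu_1}, set $a=\frac12-\nu\in(0,\frac12)$. On $S_1$, $(k-j)^{-a}\le 2^a k^{-a}$, so $S_1\le 2^a k^{-a}\ln(k+1)$. Applying \eqref{eqn:ln} with $\hat r=k+1$ and $r=a/2$ gives $k^{-a}\ln(k+1)\le 2^{a}(k+1)^{-a}\ln(k+1)\le C(1-2\nu)^{-1}k^{-a/2}$, which is the claimed rate $k^{-(1-2\nu)/4}$. On $S_2$, $(j+1)^{-1}\le 2/k$ and $\sum_{i\le k/2} i^{-a}\le (1-a)^{-1}(k/2)^{1-a}\le 2(k/2)^{1-a}$ give $S_2\le 4k^{-a}\le 4k^{-a/2}$. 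Summing the two pieces and tracking constants (all with $2^a\le\sqrt2$) yields a bound $9(1-2\nu)^{-1}k^{-(1-2\nu)/4}$.
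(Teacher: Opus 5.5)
There is a genuine gap, and it sits in \eqref{eqn:sum_2_2}. Your argument for \eqref{eqn:sum_nu_1} is sound and is essentially the paper's: split at $k/2$, then apply \eqref{eqn:ln} with $r=\frac{1-2\nu}{4}$. Your \eqref{eqn:sum_1_2} also goes through once small $k$ are handled. Your claim $(1+\ln k)/k\le\frac12$ is false for $k=3,4,5$, but keeping $\ln(ek/2)$ and using $x^{-1}\ln x\le e^{-1}$, as the paper does, avoids this.

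The problem with \eqref{eqn:sum_2_2} is that freezing one factor at its size near $j=k/2$ costs a factor $4$ on exactly the terms that dominate the sum. These are the terms with $j$ near $1$ or near $k-1$. For example, the $j=k-1$ term is at most $k^{-2}$, but you charge it $4k^{-2}$. Your two half-sum bounds add up to $(k+1)^{-2}+4(\frac{\pi^2}{3}-1)k^{-2}\ge (1+4(\frac{\pi^2}{3}-1))(k+1)^{-2}\approx 10.2\,(k+1)^{-2}$. This exceeds $8(k+1)^{-2}$ for every $k\ge 2$, not just for small $k$. So your fallback of checking $k=2,3$ by hand and using $k^{-2}\le\frac{16}{9}(k+1)^{-2}$ cannot work.

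Your other fallback, peeling off $j=1$ and $j=k-1$, lowers the asymptotic constant to about $6.4$. It could be pushed through, but only with further small-$k$ verification that the proposal does not supply.

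A milder version of the same problem affects \eqref{eqn:sum_1_1}. With $k^{-1}\le 2(k+1)^{-1}$ and $1\le\ln(k+1)/\ln 3$, your terms collect to $(8+5/\ln 3)(k+1)^{-1}\ln(k+1)\approx 12.6\,(k+1)^{-1}\ln(k+1)$. Even the unconverted bound $(k+1)^{-1}+2k^{-1}\ln(k+1)+2k^{-1}(1+\ln k)$ exceeds $6(k+1)^{-1}\ln(k+1)$ for $2\le k\le 7$; at $k=2$ it is about $3.13$ versus $2.20$. Here the asymptotic constant is $4<6$, so sharper intermediate bounds repair it. Still, the bookkeeping you assert does not close as stated.

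The idea you are missing is the one the paper uses for both \eqref{eqn:sum_1_1} and \eqref{eqn:sum_2_2}. Shift $i=j+M$ and use the partial-fraction identity $\frac{1}{(k+M-i)\,i}=\frac{1}{k+M}\left(\frac{1}{k+M-i}+\frac{1}{i}\right)$. This decouples the convolution exactly and pulls out the full factor $(k+M)^{-1}\le(k+1)^{-1}$ with no loss. The remaining harmonic sums then give \eqref{eqn:sum_1_1} with constant $2(1+1/\ln 2)<5$.

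For the squared case, apply $(a+b)^2\le 2(a^2+b^2)$ to the same identity. This gives $(k+M-i)^{-2}i^{-2}\le 2(k+M)^{-2}((k+M-i)^{-2}+i^{-2})$. Hence the left side of \eqref{eqn:sum_2_2} is at most $2(\frac12+\frac{\pi^2}{6}+M^{-1})(k+1)^{-2}\le 6.3\,(k+1)^{-2}$. Because nothing is frozen at $k/2$, the endpoint terms are charged at their true size up to a factor $2$ rather than $4$. That is why this works where your split does not.
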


\begin{proof}
Using the identity $(j'-j)^{-1}j^{-1}=(j')^{-1}\big((j'-j)^{-1}+j^{-1}\big)$ (for any $j'>j$), for any $k\geq 1$,  we have
\begin{align*}
& (k+M)^{-1}+\sum_{j=1}^{k-1}(k-j)^{-1}(j+M)^{-1}\\
\leq & (k+M)^{-1}\bigg(1+\sum_{j=M+1}^{k+M-1}[(k+M-j)^{-1}+j^{-1}]\bigg)\\
\leq & (k+1)^{-1}\bigg(1+\sum_{j=1}^{k-1}j^{-1}+\sum_{j=M+1}^{k+M-1}j^{-1}\bigg)
\leq (k+1)^{-1}\bigg(\sum_{j=1}^{k-1}j^{-1}+\sum_{j=1}^{k}j^{-1}\bigg)\\
\leq& 2(k+1)^{-1}\big(1+\ln (k+1)\big)
\leq 2\big((\ln 2)^{-1}+1\big) (k+1)^{-1}\ln (k+1)
\leq 6 (k+1)^{-1}\ln (k+1).
\end{align*}
Next,
using the elementary inequality 
$(j'-j)^{-2}j^{-2}\leq 2(j')^{-2}\big((j'-j)^{-2}+j^{-2}\big),$ we deduce
\begin{align*}
&(k+M)^{-2}+\sum_{j=1}^{k-1}(k-j)^{-2}(j+M)^{-2}\\
=&(k+M)^{-2}+\sum_{j=M+1}^{k+M-1}(k+M-j)^{-2}j^{-2}\\
\leq& 2(k+M)^{-2}\bigg(\frac12+\sum_{j=M+1}^{k+M-1}\big((k+M-j)^{-2}+j^{-2}\big)\bigg)\\
\leq &2(k+1)^{-2}\bigg(\frac12+\sum_{j=1}^{k-1}j^{-2}+\sum_{j=M+1}^{k+M-1}j^{-2}\bigg)\\
\leq &2\big(\tfrac12+\tfrac{\pi^2}{6}+M^{-1}\big)(k+1)^{-2}
\leq  8(k+1)^{-2},
\end{align*}
which gives the estimate \eqref{eqn:sum_2_2}.
Finally, for any $k\geq 3$, direct computation gives
\begin{align*}
&\sum_{j=1}^{k-1}(k-j)^{-(\frac12-\nu)}(j+M)^{-1}\\
=&\sum_{j=1}^{\lfloor\frac{k-1}{2}\rfloor}(k-j)^{-(\frac12-\nu)} (j+M)^{-1}+\sum_{j=\lfloor\frac{k-1}{2}\rfloor+1}^{k-1}(k-j)^{-(\frac12-\nu)} (j+M)^{-1}\\
\leq& \Big(\frac{k}{2}\Big)^{-(\frac12-\nu)}\sum_{j=1}^{\lfloor\frac{k-1}{2}\rfloor} j^{-1}+\Big(\frac{k}{2}+M\Big)^{-1}\sum_{j=\lfloor\frac{k-1}{2}\rfloor+1}^{k-1}(k-j)^{-(\frac12-\nu)}\\
\leq& \Big(\frac{k}{2}\Big)^{-(\frac12-\nu)}\ln\bigg(\frac{e k}{2}\bigg)+2\Big(\frac{k}{2}+M\Big)^{-1}\bigg(\frac{k}{2}\bigg)^{\frac12+\nu}\\
\leq& 2\Big(\frac{k}{2}\Big)^{-(\frac12-\nu)}\big(1+\ln k\big)
\leq  2^{\frac52-\nu}k^{-(\frac12-\nu)}\ln k.
\end{align*} 
Further, the inequality \eqref{eqn:ln} with $r=\frac{1-2\nu}{4}$ yields
\begin{align*}
&\sum_{j=1}^{k-1}(k-j)^{-(\frac12-\nu)} (j+M)^{-1}
\leq 2^{\frac52}k^{-\frac{1-2\nu}{4}}(k^{-\frac{1-2\nu}{4}}\ln k)\\
\leq& 2^{\frac92}k^{-\frac{1-2\nu}{4}}
e^{-1}(1-2\nu)^{-1}
\leq 9(1-2\nu)^{-1}k^{-\frac{1-2\nu}{4}},
\end{align*} 
which implies the estimate \eqref{eqn:sum_nu_1}.
Similarly, we derive the estimate \eqref{eqn:sum_1_2}:
\begin{align*}
\sum_{j=1}^{k-1}  \big(k-j\big)^{-1}(j+M)^{-2}=&\sum_{j=1}^{\lfloor \frac{k-1}{2}\rfloor}  \big(k-j\big)^{-1}(j+M)^{-2}+\sum_{j=\lfloor \frac{k-1}{2}\rfloor+1}^{k-1}  \big(k-j\big)^{-1}(j+M)^{-2}\\
\leq &\Big(\frac{k}{2}\Big)^{-1}\sum_{j=1}^{\lfloor \frac{k-1}{2}\rfloor}  (j+M)^{-2}+(\frac{k}{2}+M)^{-2}\sum_{j=\lfloor \frac{k-1}{2}\rfloor+1}^{k-1}  \big(k-j\big)^{-1}\\
\leq &2 k^{-1}+4k^{-2}\ln\bigg(\frac{e k}{2}\bigg)= 2 k^{-1}+2ek^{-1}\Big(\frac{e k}{2}\Big)^{-1}\ln\bigg(\frac{e k}{2}\bigg)
\leq 4 k^{-1}.
\end{align*}
This completes the proof of the lemma.
\end{proof}

The next two lemmas provide bounds on $q_k^\delta$, which are used in the proof of Theorem \ref{thm:regularizing}.
First we derive a useful relation for $\{\|q_k^\delta\|\}_{k\geq 0}$.
\begin{lemma}\label{lem:q_k}
Let Assumption \ref{ass}{\rm(i)} hold. Then for any $k,k'\in \mathbb{N}$ and $k'\geq k$, there holds
\begin{equation*}
\|q_{k'}^\delta\|^2\leq \|q_k^\delta\|^2+\frac{n c_0}{2-c_0\|B\|}\sum_{j=k}^{k'-1}\|A\Delta_j^\delta\|^2.
\end{equation*}
\end{lemma}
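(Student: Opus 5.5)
The plan is to establish a one-step recursion for $\|q_k^\delta\|^2$ and then telescope from $k$ to $k'$. From the definition \eqref{eqn:qk}, $q_{k+1}^\delta=c_0\sum_{j=1}^{k}P^{k-j}N_j\Delta_j^\delta$. Separating the last summand gives the exact recursion
\begin{equation*}
q_{k+1}^\delta=Pq_k^\delta+c_0N_k\Delta_k^\delta .
\end{equation*}
It therefore suffices to prove the pathwise one-step bound
\begin{equation*}
\|q_{k+1}^\delta\|^2\leq\|q_k^\delta\|^2+\frac{nc_0}{2-c_0\|B\|}\|A\Delta_k^\delta\|^2
\end{equation*}
for every $k$. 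Summing this over $k,\dots,k'-1$ then gives the lemma. No expectation is involved, so all estimates must be deterministic, using the second inequality of Lemma \ref{lem:N}.

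For the one-step bound I expand the square:
\begin{equation*}
\|q_{k+1}^\delta\|^2=\|Pq_k^\delta\|^2+2c_0(Pq_k^\delta,N_k\Delta_k^\delta)+c_0^2\|N_k\Delta_k^\delta\|^2 .
\end{equation*}
The first term produces a dissipation. Since $P=I-c_0B$ with $B$ self-adjoint and nonnegative,
\begin{equation*}
\|Pq\|^2=\|q\|^2-2c_0\|B^{\frac12}q\|^2+c_0^2\|Bq\|^2\leq\|q\|^2-c_0(2-c_0\|B\|)\|B^{\frac12}q\|^2 ,
\end{equation*}
and $2-c_0\|B\|>0$ because $c_0\leq L^{-1}$ and $\|B\|\leq L$. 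For the cross term I write $N_k=B-A_{i_k}^*A_{i_k}$ and bound each piece by Cauchy--Schwarz, aiming for the form $|(Pq,N_k\Delta)|\lesssim\|B^{\frac12}q\|\,\sqrt{n}\,\|A\Delta\|$:
\begin{equation*}
|(Pq,B\Delta)|\leq\|B^{\frac12}q\|\,\|B^{\frac12}\Delta\|,\qquad |(A_{i_k}Pq,A_{i_k}\Delta)|\leq\|APq\|\,\|A\Delta\| .
\end{equation*}
Here I use $\|B^{\frac12}P\|\leq\|B^{\frac12}\|$ type bounds (more precisely, $P$ commutes with $B^{\frac12}$ and $\|P\|\leq 1$) together with $\|Ax\|=\sqrt{n}\|B^{\frac12}x\|$. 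Young's inequality with weight $c_0(2-c_0\|B\|)$ then absorbs the $\|B^{\frac12}q\|^2$ part into the dissipation. What remains is $c_0(2-c_0\|B\|)^{-1}(\cdot)^2+c_0^2\|N_k\Delta_k^\delta\|^2$. By Lemma \ref{lem:N} the last term is at most $c_0^2 n\|B\|\,\|A\Delta_k^\delta\|^2$.

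The main obstacle is the constant bookkeeping. Naive Cauchy--Schwarz on the two pieces of $N_k$ separately may lose a factor (e.g. $\sqrt{n}$ versus $n$, or a factor $2$). To land exactly on $nc_0/(2-c_0\|B\|)$, I would first treat $Pq+c_0N_k\Delta$ as a single quadratic form and complete the square in the variable $B^{\frac12}q$. Concretely, write
\begin{equation*}
(Pq,N_k\Delta)=(B^{\frac12}q,\,B^{-\frac12}PN_k\Delta)
\end{equation*}
on $\overline{\mathrm{Range}(B)}$, which contains $q_k^\delta$ since each $N_j\Delta_j^\delta\in\mathrm{Range}(A^*)$. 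I would then use the bound $\|B^{-\frac12}N_k\Delta\|\leq\sqrt{n}\|A\Delta\|$-type estimate implied by Lemma \ref{lem:N}. The extra $c_0^2\|N_k\Delta\|^2$ term has to be combined with the completed square via $c_0^2 n\|B\|\leq nc_0\cdot c_0\|B\|$, using the identity
\begin{equation*}
\frac{1}{2-c_0\|B\|}-\frac{c_0\|B\|}{2-c_0\|B\|}\cdot(\ldots)\leq\frac{1}{2-c_0\|B\|}.
\end{equation*}
If exact constants fail, I would settle for a constant of the same form, which suffices for the later use in Theorem \ref{thm:regularizing}.
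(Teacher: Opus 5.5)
Your reduction to a one-step bound via $q_{k+1}^\delta=Pq_k^\delta+c_0N_k\Delta_k^\delta$, followed by telescoping, matches the paper. There is a genuine gap, however: the one-step estimate you actually carry out does not give the stated constant.

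Write $a=B^{\frac12}q_k^\delta$ and $v=B^{-\frac12}N_k\Delta_k^\delta$, so that $\|v\|\le\sqrt n\|A\Delta_k^\delta\|$. Your chain has three steps: the dissipation $\|Pq\|^2\le\|q\|^2-c_0(2-c_0\|B\|)\|a\|^2$, the cross term bounded by $2c_0\|a\|\|v\|$ using $\|P\|\le 1$, and Young's inequality with weight $2-c_0\|B\|$. This uses up the entire dissipation and leaves
\[
\|q_{k+1}^\delta\|^2\le\|q_k^\delta\|^2+\Big(\frac{nc_0}{2-c_0\|B\|}+nc_0^2\|B\|\Big)\|A\Delta_k^\delta\|^2 .
\]
No choice of Young weight improves this, since the maximum of $-c_0(2-c_0\|B\|)t^2+2c_0t\|v\|$ over $t\ge0$ is exactly $c_0\|v\|^2/(2-c_0\|B\|)$.

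The surplus $c_0^2\|N_k\Delta_k^\delta\|^2$ cannot be removed afterwards. It forms a perfect square together with two other terms, and you have already estimated both away:
\begin{itemize}
\item $c_0^2\|Bq_k^\delta\|^2$, which you replaced by $c_0^2\|B\|\|a\|^2$;
\item $-2c_0^2(Bq_k^\delta,N_k\Delta_k^\delta)$, which disappeared when you bounded $P$ by its norm.
\end{itemize}
The displayed inequality containing ``$(\ldots)$'' is a placeholder, not a step. Falling back to ``a constant of the same form'' proves a weaker statement than the lemma.

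The missing idea is to keep these terms together:
\[
c_0^2\|Bq_k^\delta\|^2-2c_0^2(Bq_k^\delta,N_k\Delta_k^\delta)+c_0^2\|N_k\Delta_k^\delta\|^2=c_0^2\|Bq_k^\delta-N_k\Delta_k^\delta\|^2 .
\]
The paper writes $q_{k+1}^\delta-q_k^\delta=-c_0(Bq_k^\delta-N_k\Delta_k^\delta)$ and takes the dissipation in $u=a-v$ instead of $a$. The increment $\|q_{k+1}^\delta\|^2-\|q_k^\delta\|^2$ then equals
\[
-2c_0(u,u+v)+c_0^2\|B^{\frac12}u\|^2\le-(2-c_0\|B\|)c_0\|u\|^2-2c_0(u,v).
\]
Young's inequality now yields exactly $\frac{c_0}{2-c_0\|B\|}\|v\|^2\le\frac{nc_0}{2-c_0\|B\|}\|A\Delta_k^\delta\|^2$.

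Your idea of completing the square in $a$ can also be made to work, but only at the operator level. The increment is exactly
\[
-c_0(a,(2I-c_0B)a)+2c_0(a,Pv)+c_0^2(v,Bv).
\]
Its supremum over $a$ is $c_0\big(v,[P^2(2I-c_0B)^{-1}+c_0B]v\big)=c_0\big(v,(2I-c_0B)^{-1}v\big)$, by the spectral identity $\frac{(1-\lambda)^2}{2-\lambda}+\lambda=\frac{1}{2-\lambda}$. This requires keeping $P$ inside the cross term rather than using $\|P\|\le1$.

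A minor point: rewriting $(Pq,N_k\Delta)$ as $(B^{\frac12}q,B^{-\frac12}PN_k\Delta)$ needs $N_k\Delta_k^\delta\in\mathrm{Range}(B^{\frac12})=\mathrm{Range}(A^*)$. It does not need $q_k^\delta\in\overline{\mathrm{Range}(B)}$.
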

\begin{proof}
By the definitions of $q_j^\delta$ and $P$ in \eqref{eqn:qk}, we derive
\begin{align*}
q_{k+1}^\delta= (I-c_0B) q_k^\delta+c_0 N_k\Delta_k^\delta.
\end{align*}
The identity $\|q_{k+1}^\delta\|^2-\|q_k^\delta\|^2=2\langle q_{k+1}^\delta-q_k^\delta, q_k^\delta \rangle+\| q_{k+1}^\delta-q_k^\delta\|^2$ yields
\begin{align*}
\|q_{k+1}^\delta\|^2-\|q_k^\delta\|^2=-2c_0\langle B q_k^\delta- N_k\Delta_k^\delta, q_k^\delta \rangle+c_0^2\|B q_k^\delta- N_k\Delta_k^\delta\|^2.
\end{align*}
Lemma \ref{lem:N} implies $\|B^{-\frac12}N_k\Delta_k^\delta\|\leq \sqrt{n}\|A\Delta_k^\delta\|$. Hence 
\begin{align*}
&\|q_{k+1}^\delta\|^2\leq \|q_k^\delta\|^2-2c_0\langle B^\frac12 q_k^\delta- B^{-\frac12}N_k\Delta_k^\delta, B^\frac12 q_k^\delta \rangle+c_0^2\|B\|\|B^\frac12 q_k^\delta- B^{-\frac12}N_k\Delta_k^\delta\|^2\\
\leq& \|q_k^\delta\|^2-(2-c_0\|B\|)c_0\| B^\frac12 q_k^\delta- B^{-\frac12}N_k\Delta_k^\delta\|^2-2c_0\langle B^\frac12 q_k^\delta- B^{-\frac12}N_k\Delta_k^\delta, B^{-\frac12}N_k\Delta_k^\delta \rangle\\
\leq& \|q_k^\delta\|^2-(2-c_0\|B\|)c_0\| B^\frac12 q_k^\delta- B^{-\frac12}N_k\Delta_k^\delta\|^2+2\sqrt{n}c_0\|B^\frac12 q_k^\delta- B^{-\frac12}N_k\Delta_k^\delta\| \|A\Delta_k^\delta\|.
\end{align*}
Further, Young's inequality $2ab\leq c a^2 + c^{-1} b^2$ (for any $c>0$),
with the choice $a=\sqrt{c_0}\|B^\frac12 q_k^\delta- B^{-\frac12}N_k\Delta_k^\delta\|$, $b=\sqrt{n c_0}\|A\Delta_k^\delta\|$ and $c= 2-c_0\|B\|>0$ gives 
\begin{align*}
\|q_{k+1}^\delta\|^2
\leq& \|q_k^\delta\|^2+\frac{n c_0}{2-c_0\|B\|}\|A\Delta_k^\delta\|^2
.
\end{align*}
Thus for any $k'\geq k$,  taking the telescopic sum from $k$ to $k'$ yields the desired assertion.
\end{proof}

The following lemma gives the upper bounds of $\|q_{k}^\delta\|$ and $\E[\|q_{k}^\delta\|^2]$.

\begin{lemma}\label{lem:q_k_E}
Let Assumption \ref{ass}{\rm(i)} hold. Then for any $k\geq 3$ and $\delta\leq 1$, there hold
\begin{align*}
\|q_{k}^\delta\|\leq &(c_1+c_2)\big(c_0\|B\|^\frac12 +7 c_0^\frac12\big)\sqrt{n}M(k-1)^{-\frac{1}{4}}, \quad c_0<C_0,\\
\E[\|q_{k}^\delta\|^2] \leq & c_0(c_1+c_2) (c_0\|B\|+2) M^2 \big(k-1\big)^{-1}, \quad c_0<\overline{C_0}.
\end{align*}
\end{lemma}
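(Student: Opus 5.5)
Both bounds will follow from the definition $q_k^\delta=c_0\sum_{j=1}^{k-1}P^{k-1-j}N_j\Delta_j^\delta$, combined with Lemma \ref{lem:N}, Lemma \ref{lem:kernel} and the decay of $A\Delta_j^\delta$ from Lemma \ref{lem:Delta}. The sums that appear will be handled with the estimates \eqref{eqn:sum_nu_1} and \eqref{eqn:sum_1_2} of Lemma \ref{lem:sums}. Throughout, $\delta\le1$ is used to replace $c_1+c_2\delta$ and $c_1+c_2\delta^2$ by $c_1+c_2$.

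\emph{Mean-square bound.} The summands $c_0P^{k-1-j}N_j\Delta_j^\delta$ form a martingale-difference sequence, since $\E[N_j\mid\mathcal{F}_j]=0$ and $\Delta_j^\delta$ is $\mathcal{F}_j$-measurable. Hence the cross terms vanish, exactly as in \eqref{eqn:res-var}, and
\begin{equation*}
\E[\|q_k^\delta\|^2]=c_0^2\sum_{j=1}^{k-1}\E[\|P^{k-1-j}N_j\Delta_j^\delta\|^2]\le c_0^2\sum_{j=1}^{k-1}\|P^{k-1-j}B^{\frac12}\|^2\,\E[\|A\Delta_j^\delta\|^2],
\end{equation*}
where the inequality is the conditional form of Lemma \ref{lem:N}. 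I will split off the last index $j=k-1$ and bound it by $c_0^2\|B\|\,\E[\|A\Delta_{k-1}^\delta\|^2]$. For the remaining indices, Lemma \ref{lem:kernel} with $s=1$ gives $\|P^{m}B^{\frac12}\|^2=\|P^{2m}B\|\le (2c_0m)^{-1}$. Inserting $\E[\|A\Delta_j^\delta\|^2]\le(c_1+c_2)M^2(j+M)^{-2}$ from Lemma \ref{lem:Delta}, valid for $c_0<\overline{C_0}$, leads to
\begin{equation*}
\E[\|q_k^\delta\|^2]\le c_0(c_1+c_2)M^2\Big(c_0\|B\|(k-1+M)^{-2}+\tfrac12\sum_{j=1}^{k-2}(k-1-j)^{-1}(j+M)^{-2}\Big).
\end{equation*}
Applying \eqref{eqn:sum_1_2} with $k-1$ in place of $k$ bounds the sum by $4(k-1)^{-1}$. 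The first term is at most $c_0\|B\|(k-1)^{-1}$. Together these give the constant $c_0(c_1+c_2)(c_0\|B\|+2)$, which is the claimed bound.

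\emph{Pathwise bound.} Here there is no orthogonality, so I will use the triangle inequality together with the deterministic part of Lemma \ref{lem:N}:
\begin{equation*}
\|q_k^\delta\|\le \sqrt n\,c_0\sum_{j=1}^{k-1}\|P^{k-1-j}B^{\frac12}\|\,\|A\Delta_j^\delta\|.
\end{equation*}
Again the term $j=k-1$ is treated separately. Since $\|B^{\frac12}\|=\|B\|^{\frac12}$ and $\|A\Delta_{k-1}^\delta\|\le(c_1+c_2)M(k-1+M)^{-1}\le (c_1+c_2)M(k-1)^{-1/4}$, this term contributes $(c_1+c_2)c_0\|B\|^{\frac12}\sqrt nM(k-1)^{-1/4}$. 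For $j\le k-2$, Lemma \ref{lem:kernel} gives $\|P^mB^{\frac12}\|\le(2c_0m)^{-\frac12}$, and Lemma \ref{lem:Delta} for $c_0<C_0$ gives $\|A\Delta_j^\delta\|\le(c_1+c_2)M(j+M)^{-1}$. This leaves the sum $\sum_{j=1}^{k-2}(k-1-j)^{-\frac12}(j+M)^{-1}$. It is \eqref{eqn:sum_nu_1} with $\nu=0$ and $k-1$ in place of $k$, which requires $k-1\ge3$, and it is bounded by $9(k-1)^{-1/4}$. The resulting constant is $9/\sqrt2\le7$ times $c_0^{\frac12}$, which matches the statement.

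\emph{Main obstacle.} The only real difficulty is the small-index range. Estimate \eqref{eqn:sum_nu_1} is stated for $k\ge3$, so it covers $k\ge4$ here. The case $k=3$, where the sum has the single term $j=1$, I will check directly: that term is at most $(2c_0)^{-\frac12}(1+M)^{-1}\le(2c_0)^{-\frac12}2^{-1/4}$, which fits comfortably within $7c_0^{\frac12}(c_1+c_2)\sqrt nM\,2^{-1/4}$. For the remaining cases the constants are generous enough that no sharp bookkeeping is needed.
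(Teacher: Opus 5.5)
Your proposal is correct and matches the paper's proof step for step. In both, the mean-square bound comes from martingale-difference orthogonality plus the conditional form of Lemma \ref{lem:N}, and the pathwise bound from the triangle inequality plus the deterministic form of Lemma \ref{lem:N}; in each case you split off $j=k-1$, apply Lemmas \ref{lem:kernel} and \ref{lem:Delta}, and finish with \eqref{eqn:sum_1_2} and \eqref{eqn:sum_nu_1} with $k-1$ in place of $k$. Your separate check of $k=3$ is a sensible tidy-up, since the paper uses these sum estimates at $k-1=2$, just outside their stated range, where the single-term sums $(1+M)^{-2}$ and $(1+M)^{-1}$ trivially fit; when you write that comparison out, keep the factor $c_0\sqrt{n}(c_1+c_2)M$ on the left-hand side as well.
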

\begin{proof}
For any $k\geq 3$, by the definition of $q_k^\delta$, the triangle inequality, Lemmas \ref{lem:kernel}, \ref{lem:N} and \ref{lem:Delta} (when $c_0<C_0$), we can bound the term $\|q_{k}^\delta\|$ by
\begin{align*}
\|q_{k}^\delta\|\leq &c_0\sum_{j=1}^{k-1} \|P^{k-1-j} N_j\Delta_j^\delta\|
\leq \sqrt{n}c_0\sum_{j=1}^{k-1} \|P^{k-1-j} B^\frac12\|\|A\Delta_j^\delta\|\\
\leq &(c_1+c_2\delta)\sqrt{n}M\Bigg(c_0\|B\|^\frac12  (k-1+M)^{-1}+\sqrt{\frac{c_0}{2}}\sum_{j=1}^{k-2} \big(k-1-j\big)^{-\frac12} (j+M)^{-1}\Bigg).
\end{align*} 
Further, the assumption $\delta\leq 1$ and the estimate \eqref{eqn:sum_nu_1} in Lemma \ref{lem:sums} with $k-1$ and $\nu=0$ imply
\begin{align*}
\|q_{k}^\delta\|\leq &(c_1+c_2\delta)\sqrt{n}M\Bigg(c_0\|B\|^\frac12  (k-1)^{-1}+9\sqrt{\frac{c_0}{2}}(k-1)^{-\frac{1}{4}}\Bigg)\\
\leq &(c_1+c_2)\Big(c_0\|B\|^\frac12 +7 c_0^\frac12\Big)\sqrt{n}M(k-1)^{-\frac{1}{4}}.
\end{align*}
Similarly, by Lemmas \ref{lem:kernel}, \ref{lem:N} and \ref{lem:Delta} (when $c_0<\overline{C_0}$), and the estimate \eqref{eqn:sum_1_2} in Lemma \ref{lem:sums} with $k-1$, we derive
\begin{align*}
\E[\|q_{k}^\delta\|^2]\leq&  c_0^2\E[\|P^{k-1-j} N_j\Delta_j^\delta\|^2]
\leq c_0^2\sum_{j=1}^{k-1}\|P^{k-1-j} B^{\frac12}\|^2\E[\|A\Delta_j^\delta\|^2]\\
\leq& c_0(c_1+c_2) M^2 \bigg(c_0\|B\|(k-1+M)^{-2}+\frac{1}2 \sum_{j=1}^{k-2}  \big(k-1-j\big)^{-1}(j+M)^{-2}\bigg)\\
\leq& c_0(c_1+c_2) (c_0\|B\|+2) M^2 \big(k-1\big)^{-1}.
\end{align*}
This complete the proof of the lemma.
\end{proof}

\bibliographystyle{abbrv}
\bibliography{sgd}
\end{document}